\documentclass[11pt,a4paper,twoside]{article}
\title{Parameter Height bounds for the Zilber Pink conjecture for PEL types III and IV}
\author{Bijay Raj Bhatta\thanks{Corresponding author: 
Department of Mathematics, University of Manchester, 
Alan Turing Building, Manchester M13 9PY. 
Email: bijay.bhatta@manchester.ac.uk} \\
Department of Mathematics, University of Manchester \\
Alan Turing Building, Manchester M13 9PY}

\date{}
\usepackage{amsmath,amssymb}
\usepackage{graphicx}  
\usepackage{pdfpages}
\usepackage{hyperref}
\usepackage{epstopdf}
\usepackage{inputenc}
\usepackage{titlesec} 
\usepackage{aliascnt}
\usepackage{tikz}
\usepackage{tipa}
\usepackage{xtab}
\usepackage{setspace}

\newcommand{\bQ}{\mathbb{Q}}
\newcommand{\bR}{\mathbb{R}}

\newcommand{\bZ}{\mathbb{Z}}
\newcommand{\ZZ}{\mathbb{Z}}
\newcommand{\QQ}{\mathbb{Q}}
\newcommand{\RR}{\mathbb{R}}
\newcommand{\CC}{\mathbb{C}}

\newcommand{\gG}{\mathcal{G}}

\newcommand{\cA}{\mathcal{A}}

\newcommand{\cF}{\mathcal{F}}

\newcommand{\cM}{\mathcal{M}}

\newcommand{\fc}{\mathfrak{c}}

\newcommand{\Ag}{\mathcal{A}_g}

\newcommand{\Hg}{\mathcal{H}_g}

\newcommand{\gGL}{\mathbf{GL}}

\newcommand{\gSp}{\mathbf{Sp}}

\newcommand{\rH}{\mathrm{H}}
\newcommand{\rM}{\mathrm{M}}

\renewcommand{\Im}{\mathop{\mathrm{Im}}}

\newcommand{\der}{\mathrm{der}}
\newcommand{\id}{\mathrm{id}}

\newcommand{\norm}[1]{\left\lVert#1\right\rVert}


\newcounter{constant}

\newcommand{\createC}[1]{%
    \refstepcounter{constant}%
    \label{C:#1}%
}

\newcommand{\newC}[1]{%
        \refstepcounter{constant}%
        C_{\theconstant}%
        \label{C:#1}%
}

\newcommand{\refC}[1]{C_{\ref*{C:#1}}}
\DeclareMathOperator{\Aut}{Aut}

\DeclareMathOperator{\covol}{covol}

\DeclareMathOperator{\diag}{diag}
\DeclareMathOperator{\disc}{disc}
\DeclareMathOperator{\End}{End}

\DeclareMathOperator{\Tr}{Tr}

\DeclareMathOperator{\GSp}{GSp}

\DeclareMathOperator{\Lie}{Lie}

\DeclareMathOperator{\Nm}{Nm}
\DeclareMathOperator{\Nrd}{Nrd}

\DeclareMathOperator{\Sp}{Sp}

\DeclareMathOperator{\Stab}{Stab}
\DeclareMathOperator{\Trd}{Trd}

\renewcommand{\Im}{\mathop{\mathrm{Im}}}

\onehalfspacing
\raggedbottom
\usetikzlibrary{positioning}
\titleformat{\subsection}[runin]
  {\normalfont \bfseries}{\thesubsection}{1em}{}
\titleformat{\subsubsection}[runin]
  {\normalfont}{\thesubsubsection}{1em}{}
\makeatletter
\newcommand*\bigcdot{\mathpalette\bigcdot@{.5}}
\newcommand*\bigcdot@[2]{\mathbin{\vcenter{\hbox{\scalebox{#2}{$\m@th#1\bullet$}}}}}
\makeatother  
\usepackage{multirow}
\usepackage{mathtools}
\usepackage{amsfonts, latexsym, mdwlist, amsthm,amscd}
\numberwithin{equation}{subsection}
\usepackage{pifont} 
\usepackage{enumerate} 
\usepackage[super]{nth} 
\usepackage{color}
\usepackage{mathrsfs}
\usepackage{geometry}
\geometry{includehead,includefoot,left=3cm,right=3cm,top=2cm,bottom=2cm}
\setlength{\parskip}{1em}
\usepackage{float}
\setcounter{tocdepth}{4}
\setcounter{secnumdepth}{4}

\def\abs#1{\left\lvert#1\right\rvert}

\makeatletter
\newtheorem*{rep@theorem}{\rep@title}
\newcommand{\newreptheorem}[2]{%
\newenvironment{rep#1}[1]{%
 \def\rep@title{#2 \ref{##1}}%
 \begin{rep@theorem}}%
 {\end{rep@theorem}}}
 \usepackage[capitalise]{cleveref}
\makeatother


\newtheorem{theorem}{Theorem}[section]

\newaliascnt{lemma}{theorem}
\newtheorem{lemma}[lemma]{Lemma}
\aliascntresetthe{lemma}

\newaliascnt{conjecture}{theorem}
\newtheorem{conjecture}[conjecture]{Conjecture}
\aliascntresetthe{conjecture}

\newaliascnt{proposition}{theorem}
\newtheorem{proposition}[proposition]{Proposition}
\aliascntresetthe{proposition}

\newaliascnt{corollary}{theorem}
\newtheorem{corollary}[corollary]{Corollary}
\aliascntresetthe{corollary}

\newaliascnt{definition}{theorem}
\newtheorem{definition}[definition]{Definition}
\aliascntresetthe{definition}

\newaliascnt{remark}{theorem}
\newtheorem{remark}[remark]{Remark}
\aliascntresetthe{remark}

\newaliascnt{property}{theorem}

\aliascntresetthe{property}

\newaliascnt{properties}{theorem}

\aliascntresetthe{properties}

\newaliascnt{claim}{theorem}
\newtheorem{claim}[claim]{Claim}
\aliascntresetthe{claim}

\newaliascnt{condition}{theorem}

\aliascntresetthe{condition}

\newaliascnt{hypothesis}{theorem}

\aliascntresetthe{hypothesis}

\newtheorem*{lemma*}{Lemma}
\newtheorem*{proposition*}{Proposition}
\newtheorem*{theorem*}{Theorem}
\newtheorem*{corollary*}{Corollary}
\newtheorem*{claim*}{Claim}

\crefname{theorem}{Theorem}{Theorems}
\Crefname{theorem}{Theorem}{Theorems}

\crefname{lemma}{Lemma}{Lemmas}
\Crefname{lemma}{Lemma}{Lemmas}

\crefname{conjecture}{Conjecture}{Conjectures}
\Crefname{conjecture}{Conjecture}{Conjectures}

\crefname{proposition}{Proposition}{Propositions}
\Crefname{proposition}{Proposition}{Propositions}

\crefname{corollary}{Corollary}{Corollaries}
\Crefname{corollary}{Corollary}{Corollaries}

\crefname{definition}{Definition}{Definitions}
\Crefname{definition}{Definition}{Definitions}

\crefname{remark}{Remark}{Remarks}
\Crefname{remark}{Remark}{Remarks}

\crefname{property}{Property}{Properties}
\Crefname{property}{Property}{Properties}

\crefname{properties}{Properties}{Properties}
\Crefname{properties}{Properties}{Properties}

\crefname{claim}{Claim}{Claims}
\Crefname{claim}{Claim}{Claims}

\crefname{condition}{Condition}{Conditions}
\Crefname{condition}{Condition}{Conditions}

\crefname{hypothesis}{Hypothesis}{Hypotheses}
\Crefname{hypothesis}{Hypothesis}{Hypotheses}

\geometry{includehead,includefoot,left=3cm,right=3cm,top=2cm,bottom=2cm}
\usepackage[numbers]{natbib}
\usepackage{url}
\bibliographystyle{plainnat}

\begin{document}
\maketitle
\pagenumbering{arabic}

\begin{abstract}
   We prove the Zilber-Pink conjecture for the intersection of an irreducible Hodge generic algebraic subvariety $ V \subset \mathcal{A}_g$ with special subvarieties of all simple PEL types other than $\mathbb{Z}$, under the assumption of the Large Galois Orbits conjecture. In particular, we establish parameter height bounds for the arithmetic ingredients of the Pila-Zannier strategy in the case of Albert types~III and~IV. This paper is a sequel to \citep{Lattices}. 

\end{abstract}

\section{Introduction}

In this paper we prove the following theorem for $\mathcal{A}_g(\mathbb{C})$, the moduli space of principally polarised abelian varieties of dimension~$g$ (a standard example of a Shimura variety).

\begin{theorem} \label{zp1}
Let $g \ge 3$.  
Let $\Sigma$ be the set of points $s \in \mathcal{A}_g(\mathbb{C})$ such that the associated abelian variety $A_s$ has endomorphism algebra which is a division algebra of Albert type~III or~IV.  
Let $C$ be an irreducible Hodge generic algebraic curve in $\mathcal{A}_g$.

If $C$ satisfies \cref{GO0}, then the intersection $C \cap \Sigma$ is finite.
\end{theorem}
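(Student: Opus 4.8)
The plan is to follow the Pila–Zannier strategy, which for this Shimura-variety setting proceeds through the following steps. First, one reduces the Zilber–Pink statement for $C$ to bounding the intersection of $C$ with the special subvarieties arising from PEL data of Albert types III and IV; since $C$ is a curve, a special subvariety meeting $C$ in infinitely many points and not containing $C$ must be zero-dimensional, so the real task is to show that the set of special points of the relevant PEL types lying on $C$ is finite. One then stratifies these special points according to the associated reductive $\bQ$-group (equivalently, the Albert invariants of the endomorphism algebra), and for each such point $s$ works inside the corresponding Shimura datum, obtaining a fundamental-domain preimage $\tilde{x}_s$ in Siegel upper half space for the uniformisation map.

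\textbf{Second, the arithmetic input: parameter height bounds.} For each special point $s \in C \cap \Sigma$ one must show that $\tilde{x}_s$ can be represented by a matrix whose height is polynomially bounded in terms of the complexity of the PEL datum — concretely, bounded in terms of the discriminant of the endomorphism algebra and the relevant polarisation data. This is exactly where the main theorem of the present paper (the existence of an ``orthogonal'' basis for a sublattice of effectively bounded index, now for skew-Hermitian forms over division $\bQ$-algebras of types III and IV) is used: it puts the lattice-with-form into a normal form of controlled size, which translates into a point of controlled height in the Siegel domain. This mirrors the role played by the Daw–Orr result \citep{Lattices} in types I and~II.

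\textbf{Third, the transcendence and Galois inputs.} On the one hand, one invokes the Pila–Wilkie counting theorem (in the o-minimal structure $\Ranexp$) applied to the definable uniformisation of $C$ restricted to a fundamental domain: this forces the preimages of special points of bounded height to lie on a bounded number of semialgebraic curves, and a functional-transcendence / Ax–Lindemann argument (for $\Ag$ this is available) shows that such curves must be ``weakly special'', which since $C$ is Hodge generic is a contradiction unless there are few points. On the other hand, one needs a lower bound for the degree of the Galois orbit of $s$ that grows like a positive power of the complexity of the PEL datum — this is precisely the hypothesis \cref{Galois-orbits} assumed on $C$. Comparing the polynomial upper bound on the number of rational points in the definable set (from Pila–Wilkie plus the height bound) against this Galois lower bound (since all Galois conjugates of $s$ again lie in $C \cap \Sigma$ with the same complexity) yields that the complexity is bounded, hence there are only finitely many possible PEL data, hence finitely many points.

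\textbf{The main obstacle} I expect is the second step: establishing the parameter height bound uniformly over all special points of Albert types III and IV. The geometry of the Hermitian symmetric domains for $\gGSO$-type and $\gGU$-type groups (types III and IV respectively) is more delicate than the symplectic case, and one must carefully track how the normal form of the skew-Hermitian lattice — guaranteed by the structural theorem of this paper — controls the period matrix: one has to bound not just the entries of a representative of $\tilde{x}_s$ but also ensure the bound is polynomial in the discriminant of the (possibly non-commutative) endomorphism algebra and in the degree of its centre. Handling the interaction between the centre of the division algebra (a CM field in type IV, a totally real or quadratic extension in type III) and the involution, while keeping all constants effective, is the technical heart of the argument.
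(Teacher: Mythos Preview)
Your outline is essentially the paper's approach: Pila--Zannier via the Daw--Ren/Habegger--Pila machinery, with \cref{height-bound} supplying the parameter height bound and \cref{Galois-orbits} supplying the Galois lower bound. Two technical points that your sketch glosses over but the paper treats as genuinely new ingredients beyond \citep{Lattices} are worth flagging.

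First, the stratification is not just by the Albert invariants $(d,e,m)$ but, in type~IV, also by the \emph{signature} $\mathbf{s}$ of the skew-Hermitian form (section~\ref{unitrary}); without fixing $\mathbf{s}$, the groups $\gH_\bR$ need not all be $\gG(\bR)$-conjugate, and \cref{conjclass1} fails. The paper handles this by a pigeonhole over the finitely many signatures. Second, even once $\gH_0$ is fixed there may be several Shimura subdatum components $(\gH_0,X_i^+)$ with the same underlying group; the paper invokes the Ullmo--Yafaev finiteness lemma (\cref{conj-class-datum}) and the disjointness of conjugacy classes to ensure the continuous path produced in the Habegger--Pila step lands in a single pre-special subvariety.

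One small inaccuracy in your first paragraph: the points of $\Sigma$ are not \emph{special points} (i.e.\ CM points); they are endomorphism-generic points in proper PEL-type special subvarieties, so the reduction is not to counting zero-dimensional special subvarieties but to showing that only finitely many such PEL subvarieties meet $C$ at all (via \cref{prop:finitely-many} once the complexity is bounded). Apart from these refinements, your plan matches the paper.
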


\begin{conjecture} \label{GO0}
Let $\Sigma \subset \mathcal{A}_g$ be as in \cref{zp1}.  
Let $C \subset \mathcal{A}_g$ be an irreducible Hodge generic algebraic curve defined over a finitely generated field $L \subset \mathbb{C}$.  
Then there exist positive constants $\newC{GO0-mult}$ and 
$\newC{GO0-exp}$, depending only on $g$, $L$ and $C$, such that for every point $s \in C \cap \Sigma$,
\[
\#\Aut(\mathbb{C}/L)\cdot s 
\ge
\refC{GO0-mult}\,
\bigl|\disc(\End(A_s))\bigr|^{\refC{GO0-exp}}.
\]
\end{conjecture}

A stronger version is in fact known: both \cref{zp1} and \citep[Theorem~1.3]{Lattices} extend to any irreducible Hodge generic subvariety $V \subseteq \mathcal{A}_g$ of dimension at least~$1$.  
This generalisation follows from work of Cassani \citep{Cas24}, assuming a parameter height bound and the Large Galois Orbits condition for $V$; see \citep[Section~6.2.3]{Daw25}.  
Because our parameter height bound does not depend on $\dim V$, the result applies to all such $V$, not only to curves.  
This yields the following theorem.

\begin{theorem}\label{emt}
Let $V$ be an irreducible Hodge generic subvariety of $\mathcal{A}_g$.  
Let $\Sigma$ be the union of special subvarieties $Z$ of PEL type whose generic abelian variety is simple and for which $\textnormal{codim}(Z)>\dim(V)$.

If $V$ satisfies \cref{GO0} (with $C$ replaced by $V$ and this choice of $\Sigma$), then $V \cap \Sigma$ is not Zariski dense in $V$.
\end{theorem}

Unless stated otherwise, all endomorphisms of abelian varieties in this paper are taken over an algebraically closed field.  
Recall that a \textbf{special subvariety} of a Shimura variety $S$ means an irreducible component of a Shimura subvariety, and a subvariety is \textbf{Hodge generic} if it is not contained in any proper special subvariety.

Theorem \ref{zp1} represents a specific case of the Zilber-Pink conjecture on unlikely intersections.  
For a discussion of how \cref{zp1} fits into this framework, see \citep[Section~1.C]{Lattices}.  

\begin{conjecture} \label{ZP2} \citep[Conjecture~1.3]{Pin05}
Let $S$ be a Shimura variety and let $V$ be an irreducible Hodge generic subvariety of~$S$.  
Then the intersection of $V$ with the special subvarieties of $S$ of codimension strictly greater than $\dim V$ is not Zariski dense in $V$.
\end{conjecture}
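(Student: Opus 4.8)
\emph{Proof proposal.} The statement is the Zilber--Pink conjecture in full generality, which is open; what can be offered is the Pila--Zannier plan, indicating where the present paper's contribution (parameter height bounds for PEL types III and IV, together with \citep{Lattices} for types I and II) enters and what remains genuinely conjectural. The first step is to replace ZP2 by its optimality reformulation (Habegger--Pila, Daw--Ren): it suffices to show that $V$ contains only finitely many maximal optimal subvarieties. Arguing by induction on $\dim V$ and restricting to the Hodge-generic open locus, one reduces to bounding the complexity of atypical optimal points $x \in V$, i.e.\ points lying in a special subvariety $Z \subseteq S$ with $\operatorname{codim}_S Z > \dim V$ that are not contained in any positive-dimensional component of $V \cap Z'$ with $Z'$ special of defect at most that of $Z$.

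Next, write $S = \Gamma \backslash X^+$ with uniformisation $\pi\colon X^+ \to S$, $X^+$ a Hermitian symmetric domain for $\gG(\bR)^+$. By Klingler--Ullmo--Yafaev (building on Peterzil--Starchenko), the restriction of $\pi$ to a Siegel fundamental set $\mathfrak{F}$ is definable in $\Ranexp$. A special subvariety through $x$ corresponds to a reductive $\bQ$-subgroup $\gH \subseteq \gG$, and $\pi^{-1}(Z) \cap \mathfrak{F}$ is, up to finitely many translations, a union of $\gG(\bR)$-translates of the subdomain $X_{\gH}^+$; hence the preimage of $V \cap Z$ consists of points of the definable set $Y = (\pi|_{\mathfrak{F}})^{-1}(V)$ that lie on such translated subdomains. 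Attach to $x$ a complexity $\Delta(x)$ measuring the arithmetic size of $\gH$ (in the $\Ag$ case one may take $\abs{\disc \End(A_x)}$). Two inputs are now required: (a) a \textbf{parameter height bound}, namely that a $\gG(\bR)$-translate carrying $X_{\gH}^+$ into $\mathfrak{F}$, and $X_{\gH}^+$ itself, can be cut out by polynomials of degree and height polynomial in $\Delta(x)$ --- equivalently that $\gH$ admits a generating set of bounded height, which is exactly what the ``orthogonal basis for a sublattice of bounded index'' results underlying \cref{zp1} (and \citep{Lattices}) provide for simple PEL types; and (b) a \textbf{Galois lower bound}, namely $\# \Aut(\bC/L)\cdot x \geq \Delta(x)^{\delta}$ for some $\delta>0$, which is \cref{Galois-orbits}.

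Granting (a) and (b), the Galois conjugates of $x$ yield $\gg \Delta(x)^{\delta}$ algebraic points of polynomial height on $Y$, each sitting on a pre-algebraic block of degree polynomial in $\Delta(x)$; the block form of the Pila--Wilkie counting theorem then forces $x$ to lie on a positive-dimensional algebraic block of $Y$, provided one first shows, using the Hodge-generic hypothesis on $V$ and the Ax--Schanuel theorem for Shimura varieties (Mok--Pila--Tsimerman), that the $\bR$-algebraic part of $Y$ does not already contain such blocks for bounded $\Delta(x)$. Descending this positive-dimensional block through $\pi$ produces a positive-dimensional component of $V \cap Z'$ for a special $Z'$ of controlled defect, contradicting the optimality of $x$ unless $\Delta(x)$ is bounded. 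Finitely many bounded-complexity special subvarieties then contribute only finitely many maximal optimal subvarieties, completing the argument.

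The binding obstacle is input (b): the Large Galois Orbits conjecture is established only in restricted settings (e.g.\ the André--Oort-strength bounds of Tsimerman and Pila--Shankar--Tsimerman for $\Ag$), and not at ZP strength for all special subvarieties of all Shimura varieties. Input (a) is precisely what the present paper secures for PEL types III and IV, extending \citep{Lattices}; but for non-PEL Shimura varieties, and for special subvarieties not of Hodge type, the effective control of the defining $\bQ$-subgroups is still missing. The functional-transcendence ingredient, by contrast, is available in full generality, so it is not the bottleneck. Thus, within this strategy, an unconditional proof of \cref{ZP2} reduces to two tasks: proving Large Galois Orbits in complete generality, and extending parameter height bounds of the type proved here beyond the PEL case.
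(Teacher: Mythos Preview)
The statement you were asked to address is \cref{ZP2}, which the paper explicitly presents as a \emph{conjecture} (the Zilber--Pink conjecture, attributed to Pink) and describes as ``a major unsolved problem in Diophantine geometry.'' The paper offers no proof of it, nor does it claim to; its contribution is to establish the parameter height bound ingredient for simple PEL types~III and~IV, yielding the conditional \cref{zp1} and the unconditional \cref{CorLGO} in that restricted setting.

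You correctly recognise that \cref{ZP2} is open and, rather than attempting a proof, you give an accurate sketch of the Pila--Zannier strategy as developed by Habegger--Pila and Daw--Ren, identifying the two arithmetic inputs (parameter height bounds and Large Galois Orbits) and the functional transcendence input (Ax--Schanuel). Your account of where the present paper fits --- supplying input~(a) for PEL types~III and~IV --- and of what remains open is consistent with the paper's own framing in the introduction. There is nothing to compare against, since the paper contains no proof of this conjecture; your proposal is best read as a correct commentary on the status of the problem rather than as a proof attempt.
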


The Zilber-Pink conjecture remains one of the major open problems in arithmetic geometry.  
Pila and Zannier introduced a method combining o-minimality and Diophantine estimates, first applied to a new proof of the Manin-Mumford conjecture \citep{PZ08}.  
For Shimura varieties, this approach was developed by Daw and Renn \citep{DR18}, following ideas of Habegger and Pila \citep{HP16}.  
Two essential arithmetic ingredients of this strategy are a parameter height bound and the Large Galois Orbits (LGO) condition.

Albert classified division $\mathbb{Q}$-algebras with positive involution into four types \citep[Sec.~21, Thm.~2]{Mum74} (see Section~\ref{div}).  
Height bounds for types~I and~II were proved in \citep{Lattices}; extending these results to types~III and~IV gives a conditional proof of \cref{zp1}, and leads to unconditional results such as \cref{CorLGO} and \cref{cor2}.

We recall an unconditional result on LGO due to Daw and Orr.

\begin{theorem}\citep[Theorem 1.1]{DO23b}\label{LGOtheorem}
Let $g \ge 2$.  
Let $C$ be a geometrically irreducible Hodge generic curve in $\mathcal{A}_g$, defined over $\overline{\mathbb{Q}}$, whose Zariski closure in the Baily-Borel compactification meets the zero-dimensional boundary stratum.  
Then there exist constants $\newC{mom},\newC{mbe}>0$ such that for every $s \in C(\overline{\mathbb{Q}})$ for which $A_s$ has unlikely endomorphisms,
\[
[\mathbb{Q}(s):\mathbb{Q}]
\ge
\refC{mom}\,
\bigl|\disc(\End(A_s))\bigr|^{\refC{mbe}}.
\]
\end{theorem}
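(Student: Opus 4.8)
The plan is to run the $G$-function method of Bombieri and Andr\'e, in the form that Daw and Orr developed for $E\times\mathrm{CM}$ curves in $\mathcal{A}_2$, together with effective isogeny estimates. First I would make the standard reductions: after replacing $\QQ(s)$ by a number field $K_s$ with $[K_s:\QQ]\le c_0(g)\,[\QQ(s):\QQ]$ over which $A_s$, all of its geometric endomorphisms, and the polarisation are defined and over which $A_s$ has everywhere semistable reduction, set $\cO:=\End(A_s)$, $N:=[K_s:\QQ]$ and $D:=\abs{\disc(\cO)}$; it then suffices to prove $N\ge c_1 D^{c_2}$. The hypothesis that the closure of $C$ meets the zero-dimensional Baily--Borel stratum provides a boundary point $s_\infty$ at which the universal abelian scheme degenerates to $\GG_{\mathrm{m}}^g$ with \emph{maximal} unipotent monodromy; via Mumford's construction and the nilpotent orbit theorem this yields a local coordinate $t$ near $s_\infty$ on $C$ in which the period matrix of $A_s$ reads $\tfrac{1}{2\pi i}\log q(t)+E(t)$, with $q=(q_{ij})$ a symmetric matrix of algebraic functions vanishing at $s_\infty$ and $E$ a matrix of holomorphic functions solving the associated Picard--Fuchs system. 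The structural point the boundary hypothesis really supplies is that, the monodromy being maximally unipotent, the entries of $E$ are $G$-functions: power series over a number field with bounded denominators and a global radius of convergence bounded below by a positive constant.

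The heart of the argument is to turn ``$A_s$ has unlikely endomorphisms'' into a quantitative statement about algebraic values of these $G$-functions. If $\cO\subseteq\End(A_s)$ then, after a bounded-degree isogeny --- its degree controlled by the Masser--W\"ustholz theorem in the explicit form of Gaudron--R\'emond --- the point $t(s)$ lies in the intersection of a neighbourhood of $s_\infty$ with the special subvariety of $\Ag$ attached to $\cO$; unwinding this inclusion, the algebraic values $E(t(s))$ and $q_{ij}(t(s))$ satisfy a prescribed system of polynomial and multiplicative relations over $\overline{\QQ}$ whose number and logarithmic height are bounded polynomially in $D$ and in the index of $\cO$ in a maximal order. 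Feeding the algebraic point $t(s)$, its degree, and this system into the $G$-function machinery of Bombieri and Andr\'e (supplemented by standard height estimates for the multiplicative relations), I would extract $N\ge c_1 D^{c_2}$. Because the period $G$-functions are global arithmetic objects, the estimate applies to any $s\in C(\overline{\QQ})$ and not only to those $s$ lying archimedeanly near $s_\infty$.

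Points $s$ for which $A_s$ is in addition of CM type I would handle separately and by now classically: Shimura reciprocity presents the Galois orbit of $s$ as, up to a bounded factor, a torsor under the idele class group of the Mumford--Tate torus of $A_s$, of cardinality bounded below by a positive power of $D$ by the Brauer--Siegel theorem, the averaged Colmez formula together with the isogeny estimates making the exponent uniform --- this is the Andr\'e--Oort-type Galois orbit bound of Tsimerman and of Yuan--Zhang. Combining the CM and non-CM cases and absorbing the bounded losses from the reductions and from the isogeny theorem yields constants $c_1,c_2>0$ of the required shape, depending only on $g$ and --- through the logarithmic height of the Picard--Fuchs data at $s_\infty$ --- on $C$.

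I expect the main obstacle to be the quantitative step in the second paragraph: bounding, uniformly in $\cO$, the logarithmic height and the number of the relations among the period $G$-functions that a large endomorphism discriminant forces, while keeping every constant in the $G$-function estimate independent of $\cO$. This is exactly where maximal toric degeneration at the deepest cusp is indispensable --- it is what guarantees a positive lower bound for the global radius of convergence of the period system --- and it is also why the method is, at present, limited to curves, since a higher-dimensional $V$ would require a several-variable $G$-function theory of comparable effectivity. Two further points need care: interfacing the multiplicative relations coming from the toric part of the degeneration with the archimedean and $p$-adic inputs of the $G$-function estimate, and checking that the isogeny-theoretic reductions and the bound on the special-subvariety data are genuinely uniform in $\cO$.
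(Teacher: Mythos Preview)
This theorem is not proved in the present paper: it is quoted verbatim as \cite[Theorem~1.1]{DO23b} and used only as an input to Corollary~\ref{CorLGO}. There is therefore no ``paper's own proof'' to compare your proposal against.

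That said, your outline is a reasonable high-level summary of the strategy actually carried out in \cite{DO23b}: the Andr\'e--Bombieri $G$-function method, applied at a point of maximal multiplicative degeneration on the boundary, combined with effective isogeny estimates of Masser--W\"ustholz/Gaudron--R\'emond type, with the CM case handled separately via Brauer--Siegel and the Tsimerman/Yuan--Zhang bounds. The obstacles you flag --- uniformity in $\cO$ of the height and number of the relations among period $G$-functions, and the restriction to curves forced by the current state of the $G$-function machinery --- are indeed the genuine technical issues in that paper. If your goal was to reconstruct the argument of \cite{DO23b}, you have identified the right ingredients; but none of this belongs to the paper under review, which treats this theorem as a black box.
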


\begin{corollary}\label{CorLGO}
Let $g \ge 3$ and let $C$ be as above.  
Then $C(\overline{\mathbb{Q}})$ contains only finitely many points $s$ for which $A_s$ is simple and $\End(A_s)\neq\mathbb{Z}$.
\end{corollary}

\begin{proof}
This follows from \citep[Theorem~1.3]{Lattices}, \cref{zp1}, and \cref{LGOtheorem}.
\end{proof}

We also have an unconditional LGO result due to Fit\`e and Goodman \citep{FG25}.

\begin{theorem}\label{cor2}
Let $V$ be an irreducible Hodge generic subvariety of $\mathcal{A}_3$.  
Let $\Sigma$ be the union of PEL-type special subvarieties $Z$ for which the generic endomorphism ring is $\mathcal{O}_M$, where $M$ is an imaginary quadratic field, and satisfying $\textnormal{codim(Z)}>\dim(V)$.  
Then $V \cap \Sigma$ is not Zariski dense in $V$.
\end{theorem}

\begin{proof}
The statement follows from \cref{zp1} and \cref{emt}, together with \cref{impcor}, which in turn follows from \citep{FG25} (see \cref{FG}).
\end{proof}

A crucial tool is the height bound stated below and proved in \cref{sec:height bound}.  
This result is the analogue for types~III and~IV of \citep[Theorem~1.2]{Lattices}, originally proved for types~I and~II and used in \citep[Proposition~7.1]{Lattices}.


\begin{theorem} \label{height-bound}
Let $D$ be the type III or IV division $\mathbb{Q}$-algebra.
Let $\dag$ be a positive involution on ~$D$.
Let $V$ be a left $D$-vector space of dimension~$m$, equipped with a non-degenerate $(D,\dag)$-skew-Hermitian form $\psi \colon V \times V \to D$.
Let $L$ be a $\bZ$-lattice of full rank in~$V$ such that $\Trd_{D/\mathbb{Q}} \psi(L \times L) \subset \bZ$.
Let $R=\Stab_D(L)$ denote the stabiliser of $L$ in~$D$.

Then there exists a $D$-basis $v_1, \dotsc, v_m$ for $V$ such that:
 \begin{enumerate}[(i)]
\item $v_1, \dotsc, v_m \in L$;
\item the basis is weakly unitary with respect to $\psi$;
\item $[L:Rv_1 + \dotsb + Rv_m] \leq \newC{mmm} \abs{\disc(R)}^{\newC{beta}} \abs{\disc(L)}^{\newC{aaa}}$;
\item $\abs{\psi(v_i, v_j)}_D \leq \newC{bbb} \abs{\disc(R)}^{\newC{ccc}} \abs{\disc(L)}^{\newC{mml}}$ for $1 \leq i, j \leq m$.
\end{enumerate}
The constants $\refC{mmm}, \dotsc, \refC{mml}$ depend only on $m$ and $\dim_\mathbb{Q}(D)$.
\end{theorem}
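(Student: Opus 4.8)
The plan is to reproduce the inductive architecture of the proof of \citep[Theorem~1.2]{Lattices}, replacing each type-specific input — the structure of $D_\bR$, the shape of the involution $\dag$, and the local theory of the order $R$ — by its type~III / type~IV analogue. The induction is on $m=\dim_D V$. The base case $m=0$ is trivial; in the inductive step one produces a single vector $v_1\in L$ that is \emph{anisotropic} for $\psi$ (i.e.\ $\psi(v_1,v_1)\neq 0$) and ``small'', uses it to split off the line $Dv_1$ orthogonally, and applies the inductive hypothesis to the orthogonal complement. A preliminary observation makes the set-up uniform: the $\bQ$-bilinear form $B(v,w):=\Trd_{D/\bQ}\psi(v,w)$ satisfies $B(w,v)=-B(v,w)$ (because $\Trd_{D/\bQ}\circ\dag=\Trd_{D/\bQ}$ for involutions of either kind), so $(V,B)$ is a non-degenerate symplectic $\bQ$-space, the hypothesis $\Trd_{D/\bQ}\psi(L\times L)\subseteq\bZ$ says exactly $B(L,L)\subseteq\bZ$, and $\abs{\disc(L)}$ is controlled by $[L^\vee:L]$ for the $B$-dual lattice $L^\vee$.

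\textbf{Step 1: the first vector.} I would first record the real structure: $D\otimes_\bQ\bR\cong\bH^{r}$ for type~III and $D\otimes_\bQ\bR\cong\rM_d(\bC)^{r'}$ for type~IV, and on each factor $\dag$ is quaternion conjugation, resp.\ conjugate transpose — both positive — so that $a\mapsto\Trd_{D/\bQ}(a a^\dag)$ is positive definite on $D$ and a positive definite Hermitian form can be transported to $V_\bR$. Combining a geometry-of-numbers argument (as in \citep{Lattices}) with reduction theory for $R$ (which contains an order of $\abs{\disc(R)}$-bounded index in a maximal order) yields a $\bZ$-basis of a finite-index sublattice of $L$ whose vectors are short for the transported metric, with successive minima polynomially bounded in $\abs{\disc(R)}$ and $\abs{\disc(L)}$. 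Since $\psi$ is non-degenerate, $V_\bR$ contains no $D_\bR$-submodule on which $\psi$ vanishes identically, so among the first few of these short vectors (or a short $D$-combination of boundedly many of them) there is an anisotropic $v_1\in L$ with $\abs{\psi(v_1,v_1)}_D$ bounded above by a power of $\abs{\disc(R)}$ and $\abs{\disc(L)}$; the integrality hypothesis forces $\abs{\Nrd_{D/\bQ}\psi(v_1,v_1)}\geq 1$, so it is bounded below as well. I expect this step to be the main obstacle: making ``find a small anisotropic vector'' effective and uniform requires a genuine case analysis for the two Albert types, because the isotropic locus of a skew-Hermitian form over a quaternion algebra with its symplectic (first-kind) involution, and over a division algebra over a CM field with a second-kind involution, behave differently from each other and from the types~I and~II treated in \citep{Lattices}.

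\textbf{Step 2: descent and bookkeeping.} With $v_1$ fixed, set $V_1:=(Dv_1)^{\perp}=\{w\in V:\psi(v_1,w)=0\}$, a left $D$-submodule with $V=Dv_1\oplus V_1$ since $v_1$ is anisotropic, and put $L_1:=L\cap V_1$, $R_1:=\Stab_D(L_1)$. One checks $R\subseteq R_1$ (hence $\abs{\disc(R_1)}\leq\abs{\disc(R)}$) and that $L_1$ inherits $\Trd_{D/\bQ}\psi(L_1\times L_1)\subseteq\bZ$. The orthogonal projection $w\mapsto\psi(w,v_1)\psi(v_1,v_1)^{-1}v_1$ shows $[L:Rv_1+L_1]$ is bounded by a power of $\abs{\Nrd_{D/\bQ}\psi(v_1,v_1)}$, hence by a power of $\abs{\disc(R)}$ and $\abs{\disc(L)}$; the standard relation among $\disc(L)$, $\disc(Rv_1\oplus L_1)$ and this index, together with the bound on $\abs{\psi(v_1,v_1)}_D$, gives $\abs{\disc(L_1)}\leq\abs{\disc(R)}^{c_1}\abs{\disc(L)}^{c_2}$ with $c_1,c_2$ depending only on $m$ and $\dim_\bQ D$. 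Applying the inductive hypothesis to $(V_1,\psi|_{V_1},L_1,R_1)$ yields $v_2,\dots,v_m\in L_1\subseteq L$ satisfying (i)--(iv) for $V_1$; since they lie in $v_1^{\perp}$ they are orthogonal to $v_1$, giving the weak-unitarity condition (ii), while (iv) follows by combining the bound on $\abs{\psi(v_1,v_1)}_D$ with the inductive bounds on $\abs{\psi(v_i,v_j)}_D$ for $i,j\geq 2$. For (iii) one composes $[L:Rv_1+L_1]$ with $[L_1:R_1v_2+\dots+R_1v_m]$ and accounts for the index $[R_1:R]\leq\abs{\disc(R)}^{1/2}$ coming from replacing each $R_1v_i$ by $Rv_i$. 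The recursion has depth $m$ and each stage multiplies the discriminant exponents by a factor controlled by $m$ and $\dim_\bQ D$, so the final constants $\refC{minkowski-main-first},\dots,\refC{minkowski-main-last}$ depend only on $m$ and $\dim_\bQ D$, as required.
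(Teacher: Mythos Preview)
Your inductive scaffold matches the paper's, but Step~1 has a genuine gap. You plan to always find a \emph{single} short anisotropic $v_1\in L$, and you flag this as ``the main obstacle'' without giving a mechanism. In fact short vectors can be isotropic: the paper's Lemma~\ref{short1} only guarantees indices $i,j$ with $\abs{w_i}\abs{w_j}$ small and $\psi(w_i,w_j)\neq 0$, and when $i\neq j$ one has $\psi(w_i,w_i)=0$. The fix is not a vague ``short $D$-combination'' but a specific construction (Lemma~\ref{pre-induction}, cases~(b) and~(d)): one manufactures $w_j'=2bw_j-aw_i$ with carefully chosen $a,b\in R$ so that $\psi(w_j',w_i)$ lies in $D^-$, and then $v_1=w_i-w_j'$, $v_2=w_i+w_j'$ are a weakly unitary anisotropic pair. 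The coefficients $a,b$ are built from a short element $\omega\in D^-$ satisfying $\omega R^*\subset R$ (Lemma~\ref{as1}), and producing such an $\omega$ requires a bound $\eta R^\dag\subset R$. Since $R=\Stab_D(L)$ is \emph{not} $\dag$-stable in general, this $\eta$ enters as an extra parameter (Proposition~\ref{wind}) and is only eliminated at the very end via Lemma~\ref{Rint}, which gives $\eta=\abs{\disc(L)}$. None of this machinery appears in your proposal, and without it Step~1 does not go through.

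There is also a structural difference worth noting: the paper peels off either one or two $D$-dimensions per inductive step (according to whether $i=j$ or $i\neq j$), rather than always one as you do. In principle one could take only $v_1$ from the pair $(v_1,v_2)$ and recurse on dimension $m-1$, so your one-at-a-time induction is not wrong once Step~1 is fixed; but the two-at-a-time version is what the discriminant bookkeeping in Lemma~\ref{pre-induction}(iii) is set up for. Finally, your remark that ``the integrality hypothesis forces $\abs{\Nrd_{D/\bQ}\psi(v_1,v_1)}\geq 1$'' is not justified: one only knows $\psi(v_1,v_1)\in R^*$, not $R$, so $\Nrd$ need not be an integer, and the lower bounds actually come from comparing $\disc(Rv_1+\cdots)$ with $\disc(M)$ via \cref{trangular}.
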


Our proof of \cref{zp1} follows the same overall outline as \citep{Lattices}, in line with methods used in \citep{HP16, DR18, DO21a, DO21b}.  
The main new ingredient is the construction of a suitable parameter space $Y$ for special subvarieties of simple PEL type~III or~IV.  
The idea is to apply the Habegger-Pila-Wilkie counting theorem \citep[Cor.~7.2]{HP16} to a definable set
\[
D = \{(y,z) \in Y \times \mathcal{C} : z \in X_y\},
\]
where $Y$ is semi-algebraic, the sets $X_y$ belong to a fixed class of ``pre-special'' subvarieties for every rational point $y \in Y \cap \bQ^n$, and $\mathcal{C}$ is a definable subset of Siegel upper half-space. For each $s \in \Sigma_{d,e,m, \textbf{s}}$ (see \cref{FP1}), some lift $z$ lies in $X_y$ for a rational point $y \in Y(\mathbb{Q})$ whose height $H(y)$ is polynomially bounded in terms of $\mathrm{End}(A_s)$. The height bound \cref{height-bound} is then used to control rational points $y \in Y(\mathbb{Q})$.  
For details on this step, see \citep[Section~1.D]{Lattices}.

We now outline the strategy distinguishing the proof of \cref{zp1} from the approach in \citep{Lattices}. For notation and terminology, we refer to \citep{Lattices}.
Two major differences you can find in this paper compared to Daw-Orr paper is that we had to deal with signature issues in the type IV case as well as the highly non-commutative part of types III and IV compared to types I and II as there wasn't a direct generalisation of Lemma 2.11 of their paper, instead we use \cref{Rs} and \cref{w1w2} to deal with the issues of non-commutativity. We also use \citep[Lemma 3.15]{UY13} (\cref{conj-class-datum}) to show that for a given $(\mathcal{G},X)$ a Shimura Datum and $\mathcal{H} \subseteq \mathcal{G}$ a subgroup of $\mathcal{G}$, there exist finitely many $X_\mathcal{H}$ such that $(\mathcal{H}, X_\mathcal{H})$ is a Shimura Subdatum. Since conjugacy classes are disjoint from one another, the continuous path constructed in \ref{ctspath} lies in the unique pre-special subvariety of $X$ associated with $\mathcal{H}$, in one of the conjugacy classes of ${X_i}^+$. 

Let $\mathcal{G}=\GSp_{2g}$ and $(\mathcal{G},X^+)$ be the Shimura datum described in Section~\ref{Shimura-date}.  
By \cref{conjclass1}, the components $(\mathcal{H},X_\mathcal{H}^+)$ associated with simple PEL type~III/IV special subvarieties fall into finitely many $\mathcal{G}(\mathbb{R})$-conjugacy classes.  
Thus we can prove \cref{zp1} separately for each class.  
For type~III, the argument follows \citep{Lattices}.  
For type~IV, we combine conjugacy considerations with signature constraints, choosing positive integers $d,e,m,\mathbf{s}$ and taking $\mathcal{H}_0$ as in \eqref{eqn:H0}.  
We then prove \cref{zp1} for
$\Sigma_{d,e,m,\mathbf{s}}$, the union of type~IV special loci corresponding to groups $\mathcal{G}(\mathbb{R})$-conjugate to~$\mathcal{H}_0$.

\subsection{Structure of the Paper}
The paper is structured into two main sections (Sections 3 and 4). In Section 2, we introduce the necessary notation and lemmas that will be used in Sections 3 and 4. \cref{cases-of-ZP} is dedicated to proving our main theorem, \cref{zp1}. A key ingredient in our approach is the height bound, which is stated in \cref{height-bound} and proved in \cref{sec:height bound}. The challenges arising from the non-commutativity of types III and IV are addressed in \cref{antisydmo}, while the signature-related issues of type IV are handled in \cref{unitrary}.

\pagebreak
\subsection{Acknowledgements} The author wishes to express his sincere gratitude to his supervisor, Martin Orr, for his invaluable guidance and support throughout this work-without which this project would not have been possible. He is also grateful to Chris Daw for many insightful comments that have helped to improve earlier drafts of this paper. This work was supported by the Heilbronn Institute for Mathematical Research.

\section{Division algebras}\label{div}

This section collects definitions and notation from \citep{Lattices}. Let $D$ be a semisimple $k$-algebra.

\begin{definition}
An \textbf{involution} $\dag$ on $D$ is a $k$-linear map $D \to D$ such that $\dag \circ \dag = \id_D$ and $(ab)^\dag = b^\dag a^\dag$ for all $a,b \in D$.
\end{definition}

The involution $\dag$ is of the \emph{first kind} if it restricts to the identity on the centre $F$ of $D$; otherwise it is of the \emph{second kind}. This follows the convention of \citep{Lattices}.

For every $a \in D$ we have $\Trd_{D/k}(a^\dag) = \Trd_{D/k}(a)$. Hence the bilinear form
\[
(a,b) \longmapsto \Trd_{D/k}(ab^\dag)
\]
on $D$ is symmetric. The involution $\dag$ is called \textbf{positive} if this form is positive definite (equivalently, if the non-reduced trace form $(a,b) \mapsto \Tr_{D/k}(ab^\dag)$ is positive definite).

Given a semisimple $k$-algebra $(D,\dag)$ with positive involution, define a norm on $D_{\mathbb{R}}$ by
\[
\abs{a}_D = \sqrt{\Trd_{D_{\mathbb{R}}/\mathbb{R}}(aa^\dag)}.
\]

For background on semisimple algebras, reduced norms, and traces, see \citep[Sec.~9]{Rei75}.

\begin{lemma}[{\citep[Lemma~2.1]{Lattices}}]\label{lm}
Let $(D,\dag)$ be a semisimple $k$-algebra with positive involution. Then
\[
\abs{ab}_D \le \abs{a}_D \abs{b}_D \qquad \text{for all } a,b \in D_{\mathbb{R}}.
\]
\end{lemma}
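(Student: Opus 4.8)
The plan is to recognise $\lvert\cdot\rvert_D$ as the norm attached to an inner product on $D_\bR$ and to reduce the submultiplicativity to a bound on operator norms of left multiplications. First I would note that, by the positivity hypothesis on $\dag$, the pairing $\langle a,b\rangle := \Trd_{D_\bR/\bR}(ab^\dag)$ is a symmetric positive definite $\bR$-bilinear form on the finite-dimensional space $D_\bR$, with $\lvert a\rvert_D = \langle a,a\rangle^{1/2}$. Writing $L_a\in\End_\bR(D_\bR)$ for left multiplication by $a$, we have $\lvert ab\rvert_D = \lvert L_a(b)\rvert_D \leq \lVert L_a\rVert\,\lvert b\rvert_D$, where $\lVert L_a\rVert$ is the operator norm relative to $\langle\cdot,\cdot\rangle$; hence it suffices to prove $\lVert L_a\rVert \leq \lvert a\rvert_D$ for every $a\in D_\bR$.

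The next step is an adjoint computation: using that the reduced trace satisfies $\Trd(x^\dag)=\Trd(x)$ and $\Trd(xy)=\Trd(yx)$, one checks $\langle L_a x,y\rangle=\langle x,L_{a^\dag}y\rangle$, so the adjoint of $L_a$ is $L_{a^\dag}$. Consequently $\lVert L_a\rVert^2=\lVert L_a^{\ast}L_a\rVert=\lVert L_{a^\dag a}\rVert$; setting $c:=a^\dag a$, the operator $L_c=L_a^{\ast}L_a$ is positive semidefinite, so $\lVert L_c\rVert$ equals its largest eigenvalue, and by cyclicity $\Trd_{D_\bR/\bR}(c)=\Trd_{D_\bR/\bR}(aa^\dag)=\lvert a\rvert_D^2$. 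Thus the whole lemma reduces to the inequality
\[ \lambda_{\max}(L_c)\ \leq\ \Trd_{D_\bR/\bR}(c),\qquad c=a^\dag a. \]
To prove this I would decompose $D_\bR=\prod_i D_i$ into simple factors; a positive involution necessarily preserves each $D_i$ (if it interchanged $D_i$ and $D_j$ then $aa^\dag=0$ for every nonzero $a\in D_i$, contradicting positivity), so $c=(c_i)$ with $c_i=a_i^\dag a_i$, $\Trd_{D_\bR/\bR}(c)=\sum_i\Trd_{D_i/\bR}(c_i)$ and $\lambda_{\max}(L_c)=\max_i\lambda_{\max}(L_{c_i})$. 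For a fixed factor $D_i=M_n(K)$ with $K\in\{\bR,\bC,\bH\}$, positivity of the restricted involution lets us choose a positive definite $\dag$-compatible Hermitian form on $K^n$, so that $c_i$ is a positive self-adjoint operator with a nonnegative real spectrum $\mu_1\geq\dots\geq\mu_n\geq 0$; left multiplication by $c_i$ on $D_i$ has the same spectrum, whence $\lambda_{\max}(L_{c_i})=\mu_1$, while $\Trd_{D_i/\bR}(c_i)=\delta(\mu_1+\dots+\mu_n)$ for a constant $\delta\geq 1$ (one has $\delta=1$ for $K=\bR$ and $\delta=2$ for $K=\bC,\bH$). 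Hence $\Trd_{D_i/\bR}(c_i)\geq\mu_1=\lambda_{\max}(L_{c_i})$, and summing over $i$ gives the required inequality.

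I expect the main obstacle to be this last step: pinning down, on each simple real factor $M_n(K)$, the exact relation between the reduced trace of a positive element and its spectral values. This rests on the structure of positive involutions on simple real algebras (equivalently Albert's classification) and on the standard description of $\Trd$ after base change to $\bC$, and one must also check that the auxiliary Hermitian form can genuinely be taken definite — which is precisely where positivity of $\dag$ re-enters. A cleaner-to-write alternative that avoids operator norms entirely is to work one simple factor at a time from the start: expand $\lvert a_i b_i\rvert_{D_i}^2$ as a positive multiple of $\mathrm{tr}\!\big((b_ib_i^\dag)(a_i^\dag a_i)\big)$, bound this by $\mathrm{tr}(b_ib_i^\dag)\,\mathrm{tr}(a_i^\dag a_i)$ via $\mathrm{tr}(PQ)\leq\mathrm{tr}(P)\,\mathrm{tr}(Q)$ for positive self-adjoint $P,Q$, and reassemble the factors using the elementary inequality $\sum_i x_iy_i\leq\big(\sum_i x_i\big)\big(\sum_i y_i\big)$ for nonnegative reals $x_i,y_i$.
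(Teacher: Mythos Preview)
The paper does not prove this lemma; it is quoted verbatim from \citep[Lemma~2.1]{Lattices} and cited without argument, so there is no in-paper proof to compare against. Your proposal is a correct self-contained argument: the identification of the adjoint $L_a^{\ast}=L_{a^\dag}$, the reduction to $\lambda_{\max}(L_{a^\dag a})\leq \Trd_{D_\bR/\bR}(a^\dag a)$, and the factor-by-factor verification all go through as you outline. The one point that genuinely needs care is your claim that on each simple real factor $M_n(K)$ the positive involution can be conjugated to the standard conjugate-transpose; you rightly flag this as resting on the classification of positive involutions on simple $\bR$-algebras, and once that normalisation is in hand the spectral inequality $\mu_1\leq\delta\sum_j\mu_j$ with $\delta\in\{1,2\}$ is immediate. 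Your alternative route via the inequality $\tr(PQ)\leq\tr(P)\,\tr(Q)$ for positive self-adjoint $P,Q$ is equally valid and slightly more direct, since it bypasses operator norms entirely.
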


Let $k \subset \mathbb{R}$, and let $(D,\dag)$ be a semisimple $k$-algebra with positive involution. Let $V$ be a left $D$-module. A function $\abs{\cdot} : V_{\mathbb{R}} \to \mathbb{R}$ is a \textbf{$D$-norm} if it arises from a positive definite inner product on $V_{\mathbb{R}}$ and satisfies
\[
\abs{av} \le \abs{a}_D \abs{v} \qquad \text{for all } a \in D_{\mathbb{R}},\ v \in V_{\mathbb{R}}.
\]
By Lemma~\ref{lm}, the norm $\abs{\cdot}_D$ is a $D$-norm on $D_{\mathbb{R}}$.

\pagebreak 

Division $\mathbb{Q}$-algebras $(D,\dag)$ with positive involution are classified according to the structure of $D_{\mathbb{R}}$ \citep[Sec.~21, Thm.~2]{Mum74}:

\begin{enumerate}
\item[\textbf{Type I.}]
$D = F$, a totally real number field, with trivial involution. Here $D_{\mathbb{R}} \cong \mathbb{R}^e$.

\item[\textbf{Type II.}]
$D$ is a non-split totally indefinite quaternion algebra over a totally real field $F$, so $D_{\mathbb{R}} \cong \mathrm{M}_2(\mathbb{R})^e$. The involution becomes matrix transpose on each factor.

\item[\textbf{Type III.}]
$D$ is a totally definite quaternion algebra over a totally real field $F$, hence $D_{\mathbb{R}} \cong \mathbb{H}^e$, where $\mathbb{H}$ is Hamilton's quaternions ($d=\sqrt{\dim_F D}=2$, $e=[F:k]$). The involution is the canonical one $a \mapsto \Trd_{D/F}(a)-a$.

\item[\textbf{Type IV.}]
$D$ is a division algebra whose centre is a CM field $F$. The involution restricts to complex conjugation on $F$. In this case
$D_{\mathbb{R}} \cong \mathrm{M}_d(\mathbb{C})^e$, where $d=\sqrt{\dim_F D}$ and $2e = [F:k]$.
\end{enumerate}

Let $k$ be any field and $(D,\dag)$ a semisimple $k$-algebra with involution. Let $V$ be a left $D$-module. A \textbf{$(D,\dag)$-skew-Hermitian form} on $V$ is a $k$-bilinear map $\psi : V \times V \to D$ satisfying
\[
\psi(y,x) = -\psi(x,y)^\dag, \qquad
\psi(ax,by) = a\psi(x,y)b^\dag
\]
for all $a,b \in D$ and $x,y \in V$. It is \textbf{non-degenerate} if for every $x \ne 0$ in $V$ there exists $y \in V$ with $\psi(x,y) \ne 0$.

A \textbf{$(D,\dag)$-compatible skew-symmetric form} on $V$ is a skew-symmetric $k$-bilinear map $\phi : V \times V \to k$ satisfying
\[
\phi(ax,y) = \phi(x, a^\dag y)
\]
for all $a \in D$ and $x,y \in V$. A pair $(V,\phi)$ with such a form is called a symplectic $(D,\dag)$-module in \citep[Sec.~8]{Mil05}.

\subsection{Preliminary Lemmas}

The next lemma is the analogue of \citep[Lemma~2.2]{Lattices} for division algebras of type~IV, with $F$ replaced by a maximal totally real subfield $F_0$. The same bounds hold for type~III algebras, even though in that case $F=F_0$ and $d=2$ and the algebra is not $\mathbb{R}$-split.

\begin{lemma}\label{Normbound}
Let $(D,\dagger)$ be a semisimple $k$-algebra with a positive involution of type~III or~IV. Let $F$ be its centre and $F_0$ a maximal totally real subfield of $F$. Then:
\begin{enumerate}[(i)]
\item (Type~III).
\[
\abs{\Nrd_{D_{\mathbb{R}}/F_{\mathbb{R}}}(a)}_D \le d^{(1-d)/2} \abs{a}_D^{\,d}, \qquad
\abs{\Nrd_{D_{\mathbb{R}}/\mathbb{R}}(a)} \le (de)^{-de/2} \abs{a}_D^{\,de}.
\]

\item (Type~IV).
\[
\abs{\Nrd_{D_{\mathbb{R}}/F_{0,\mathbb{R}}}(a)}_D \le d^{(1-2d)/2} \abs{a}_D^{\,2d}, \qquad
\abs{\Nrd_{D_{\mathbb{R}}/\mathbb{R}}(a)} \le (de)^{-de} \abs{a}_D^{\,2de}.
\]
\end{enumerate}
\end{lemma}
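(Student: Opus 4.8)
The plan is to reduce everything to the split real algebra and apply the AM--GM inequality to singular values, exactly as in the proof of \citep[Lemma 2.2]{Lattices}, keeping careful track of how the norm $\abs{\cdot}_D$ relates to the Hilbert--Schmidt norm on each simple factor. First I would fix the isomorphism $D_\bR \cong \rM_d(\bC)^e$ (type IV) or $D_\bR \cong \bH^e$ (type III), and recall that under this identification $\abs{a}_D^2 = \Trd_{D_\bR/\bR}(aa^\dag)$ decomposes as a sum over the $e$ factors of the squared Hilbert--Schmidt norm of the corresponding matrix (up to the fixed normalising constant coming from $\Trd$ versus the matrix trace, which depends only on $d$ and is absorbed into the constants). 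The key elementary input is: for a single $d\times d$ complex matrix $M$ with singular values $\sigma_1,\dotsc,\sigma_d$, one has $\abs{\det M} = \prod \sigma_i$ and $\lVert M\rVert_{\mathrm{HS}}^2 = \sum \sigma_i^2$, so by AM--GM $\abs{\det M} \le d^{-d/2}\lVert M\rVert_{\mathrm{HS}}^d$.

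For type IV: $\Nrd_{D_\bR/F_{0,\bR}}(a)$ is obtained by grouping the $e$ factors $\rM_d(\bC)^e$ into $e/2$ pairs interchanged by complex conjugation on $F$ (here I use $2e = [F:k]$, $F_{0}$ totally real of index $2$ in $F$, so $F_{0,\bR}\cong \bR^e$... more precisely $[F_0:k]=e$ and $F_{0,\bR}\cong\bR^{e}$ with each factor receiving a product of two conjugate matrix blocks). On each real place of $F_0$ the reduced norm to that factor is (up to a bounded power) the product of two complex determinants; applying the single-matrix AM--GM bound to each of the $2d$-dimensional worth of singular values, and using $\abs{a}_D^2 \ge$ (sum of the HS-norms squared of the two blocks), gives a bound of the shape $\abs{\Nrd_{D_\bR/F_{0,\bR}}(a)}_D \le d^{(1-2d)/2}\abs{a}_D^{2d}$ where $\abs{\cdot}_D$ on the target $F_{0,\bR}$ is the restriction; the constant is pinned down by optimising the AM--GM step over $2d$ terms and by the $\sqrt{\cdot}$ in the definition of $\abs{\cdot}_D$. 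For (iv), $\Nrd_{D_\bR/\bR}(a) = \Nm_{F_{0,\bR}/\bR}\bigl(\Nrd_{D_\bR/F_{0,\bR}}(a)\bigr)$ is the product of all $2de$ singular values occurring across all factors, so one applies AM--GM once to all $2de$ of them against $\abs{a}_D^2 = \sum(\text{all squared singular values})$, yielding $\abs{\Nrd_{D_\bR/\bR}(a)}\le (2de)^{-de}\cdot(\text{const})\cdot\abs{a}_D^{2de}$, which after fixing the trace-normalisation constant is $(de)^{-de}\abs{a}_D^{2de}$ as claimed. Type III, items (i) and (ii), is the same computation with $\bH$ in place of $\rM_d(\bC)$: a Hamilton quaternion $q$ has $\Nrd(q) = \abs{q}^2$ (as a real number, the square of its Euclidean length) and $d=2$, $F=F_0$, so the bounds specialise to the stated ones with $d=2$ — in fact these are literally the type-I/II bounds of \citep[Lemma 2.2]{Lattices} since for the quaternion norm the AM--GM step is degenerate ($\Nrd$ is already the square of the HS-norm on each factor).

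The main obstacle I expect is bookkeeping of constants rather than any conceptual difficulty: one must be scrupulous about (a) the normalisation relating $\Trd_{D_\bR/\bR}$ to the naive matrix trace on each $\rM_d(\bC)$ factor, which contributes a factor depending only on $d$, (b) the distinction between $\abs{\cdot}_D$ as a norm on $D_\bR$ versus its restriction to $F_\bR$ or $F_{0,\bR}$ (these agree up to the same normalising constant), and (c) in type IV, getting the exponents $2d$ and $2de$ — rather than $d$ and $de$ — correct, which is exactly the point where the Daw--Orr argument for types I/II must be modified: passing from $F$ to the totally real subfield $F_0$ doubles the number of singular values entering each norm, hence doubles the exponent and changes the constant from $d^{(1-d)/2}$, $(de)^{-de/2}$ to $d^{(1-2d)/2}$, $(de)^{-de}$. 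Once the AM--GM inequality is set up with the right count of terms on each side, items (i)--(iv) follow by direct substitution, and I would present them as four short parallel computations with the constant-tracking done once in a preliminary paragraph.
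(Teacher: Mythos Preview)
Your proposal is correct and is precisely the approach the paper takes: the paper's proof consists entirely of the sentence ``The proof is similar to that of \citep[Lemma~2.2]{Lattices},'' and you have unpacked exactly that, namely splitting $D_\bR$ into its simple factors and applying AM--GM to the singular values, with the only new feature being that in type~IV the passage from $F$ to the totally real subfield $F_0$ doubles the relevant exponents. One small bookkeeping point to tidy when you write it out: for type~IV there are $e$ complex factors $\rM_d(\bC)$ (not $2e$ paired off), and on each factor $\Nrd_{D_\bR/F_{0,\bR}}$ is $\abs{\det(\cdot)}^2$ via $\Nm_{\bC/\bR}$, so the $2d$ in the exponent arises from squaring a degree-$d$ determinant rather than from multiplying two distinct blocks --- but this does not change the AM--GM step or the final constants.
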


\begin{proof}
The argument is analogous to that of \citep[Lemma~2.2]{Lattices}.
\end{proof}
\begin{theorem}\label{PSP}
Given that $M$ is a positive definite Hermitian matrix and $N$ a Hermitian matrix, there exists a sufficiently large real number $t$ such that $N + tM$ is positive definite.
\end{theorem}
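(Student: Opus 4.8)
The plan is to reduce positive-definiteness to a statement about Rayleigh quotients on the compact unit sphere. Recall that a Hermitian matrix $A$ is positive definite precisely when $x^{*}Ax > 0$ for every vector $x$ with $\lVert x \rVert = 1$. Note first that $N + tM$ is Hermitian for every real $t$, being a real linear combination of the Hermitian matrices $N$ and $M$ (indeed $(N+tM)^{*} = N^{*} + t M^{*} = N + tM$); so it suffices to control the associated quadratic form.

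First I would observe that, since $M$ is positive definite, the continuous function $x \mapsto x^{*}Mx$ attains a strictly positive minimum $\mu > 0$ on the compact set $\{ x : \lVert x \rVert = 1 \}$ (equivalently, $\mu$ is the smallest eigenvalue of $M$). Next, the continuous function $x \mapsto x^{*}Nx$ is bounded below on the same compact set; let $-\nu$ be such a lower bound, with $\nu \geq 0$ (one may take $\nu = \lVert N \rVert$, or $-\lambda_{\min}(N)$ when that is positive and $0$ otherwise). Then for any real $t > \nu / \mu$ and any unit vector $x$,
\[ x^{*}(N + tM)x = x^{*}Nx + t\,(x^{*}Mx) \geq -\nu + t\mu > 0, \]
so $N + tM$ is positive definite, as claimed.

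An alternative, essentially equivalent, route is to diagonalise: since $M$ is positive definite Hermitian, write $M = P^{*}P$ with $P$ invertible; then $N + tM = P^{*}\bigl( P^{-*}NP^{-1} + tI \bigr)P$ is positive definite if and only if the Hermitian matrix $P^{-*}NP^{-1} + tI$ is, which holds as soon as $t$ exceeds the negative of the smallest eigenvalue of $P^{-*}NP^{-1}$. There is no genuine obstacle in this argument; the only point needing (routine) care is that the relevant infima over the unit sphere are attained, which is immediate from continuity together with compactness of the sphere in finite dimensions.
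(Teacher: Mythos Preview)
Your proof is correct and follows essentially the same approach as the paper: both bound the quadratic form $x^{*}(N+tM)x$ from below using the positive minimum of $x^{*}Mx$ on the unit sphere (the paper's constant $c$ is your $\mu$) together with a lower bound for $x^{*}Nx$. Your version is in fact slightly cleaner, making the compactness argument explicit.
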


\begin{proof}
As both $M$ and $N$ are Hermitian, so their eigenvalues are real. Since $M$ is positive definite, all its eigenvalues are positive. Let the eigenvalues of $M$ be denoted by $\lambda_1, \lambda_2, \ldots, \lambda_n$ with $\lambda_i > 0$ for all $i$. 
Let the eigenvalues of $N$ be denoted by $\mu_1, \mu_2, \ldots, \mu_n$ with $\mu_i$ for all $i$.
We consider $N + tM$, for any $t \in \mathbb{R}$, the matrix $N + tM$ is Hermitian since it is the sum of two Hermitian matrices. 

For large $t$, the term $tM$ will dominate $N$. More formally, for any non-zero vector $x \in \mathbb{C}^n$:
   \[
   x^\dagger (N + tM) x = x^\dagger N x + t x^\dagger M x
   \]
   Since $M$ is positive definite, $x^\dagger M x > 0$ for all non-zero $x$, there exists a constant $c > 0$ such that $x^\dagger M x \geq c \abs{x}^2$. 

   For sufficiently large $t$, say $t > \frac{\max_i \abs{\mu_i}}{c}$, we have:
   \[
   x^\dagger (N + tM) x > 0 \quad \text{for all non-zero} \quad x \in \mathbb{C}^n
   \]
Since $x^\dagger (N + tM) x > 0$ for all non-zero $x \in \mathbb{C}^n$, the matrix $N + tM$ is positive definite for sufficiently large $t$.

\end{proof}

The following proposition is a standard result in linear algebra and is therefore stated without proof.
\begin{proposition}\label{2ideal}
     Let $V$ be a finite-dimensional right vector space over a division algebra $D_0$, and let $\psi: V \times V \to D_0$ be a non-degenerate skew-Hermitian form. Then the map
\[
\Phi: V \to \operatorname{Hom}(V, D_0), \quad x \mapsto \psi(x, -)
\]
is an isomorphism of right $D_0$-modules. Consequently, for any given $a \in D_0$, there exists $y \in V$ such that $\psi(x, y) = a$ for any fixed $x$ with $\psi(x, -)$ surjective.
\end{proposition}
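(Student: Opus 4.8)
The plan is to prove \cref{2ideal} as a direct consequence of non-degeneracy together with the finite-dimensionality of $V$ over $D_0$, mirroring the classical argument for sesquilinear forms. First I would observe that $\Phi$ is additive and that it is $D_0$-semilinear in the appropriate sense: for $x \in V$ and $a \in D_0$, the assignment $x \mapsto \psi(x,-)$ respects the right $D_0$-module structures, where $\operatorname{Hom}(V,D_0)$ carries the natural right $D_0$-action, because $\psi(xa, y) = a^\dag \psi(x,y)$ (or the analogous identity dictated by the skew-Hermitian convention used in the paper), so $\Phi$ is a homomorphism of right $D_0$-modules after the usual twist by $\dag$. Since $\dag$ is a bijection of $D_0$, this twist does not affect injectivity or surjectivity, so it suffices to check that $\Phi$ is bijective as a map of additive groups, or equivalently of $k$-vector spaces.

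Next I would prove injectivity: if $\Phi(x) = 0$ then $\psi(x,y) = 0$ for all $y \in V$, which by the non-degeneracy hypothesis forces $x = 0$. Then I would prove surjectivity by a dimension count. Both $V$ and $\operatorname{Hom}(V, D_0)$ are finite-dimensional $k$-vector spaces (indeed right $D_0$-modules) of the same $D_0$-dimension $\dim_{D_0} V$, hence of the same finite $k$-dimension, since $D_0$ is finite-dimensional over $k$. An injective $k$-linear endomorphism-type map between $k$-vector spaces of equal finite dimension is automatically surjective. Therefore $\Phi$ is an isomorphism of right $D_0$-modules (after accounting for the $\dag$-twist, which is an isomorphism of $D_0$ onto itself, so the two module structures are abstractly isomorphic).

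Finally, for the consequence: fix $x \in V$ such that $\psi(x,-)\colon V \to D_0$ is surjective — equivalently $\Phi(x)$ is a surjective element of $\operatorname{Hom}(V,D_0)$ — which exists precisely because $\Phi$ is surjective and the identity map $D_0 \to D_0$ lies in $\operatorname{Hom}(V,D_0)$ after composing with any $D_0$-linear surjection $V \to D_0$ (such a surjection exists since $\dim_{D_0} V \geq 1$). Given $a \in D_0$, surjectivity of $\psi(x,-)$ yields $y \in V$ with $\psi(x,y) = a$, which is the asserted statement.

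The only genuine subtlety — and the step I would be most careful about — is bookkeeping the sidedness and the $\dag$-twist correctly, since $\psi$ is skew-Hermitian rather than bilinear: one must verify that the codomain $\operatorname{Hom}(V,D_0)$ is given the $D_0$-action for which $\Phi$ is genuinely $D_0$-linear (possibly after precomposing with $\dag$), and that this action still makes $\operatorname{Hom}(V,D_0)$ a free right $D_0$-module of rank $\dim_{D_0} V$. Once the conventions are pinned down, everything else is the standard injective-plus-equal-dimension-implies-bijective argument, so no real obstacle remains; this is why the paper states it without proof.
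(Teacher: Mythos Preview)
Your proposal is correct and is exactly the standard argument one would give; the paper itself states \cref{2ideal} without proof, calling it ``a standard result in linear algebra,'' so there is no alternative approach to compare against. Your care with the $\dag$-twist and sidedness is appropriate and, as you note, is the only point requiring attention.
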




\subsection{Unitary bases and signature} \label{unitrary}
Let $k$ be a field (in practice $k=\mathbb{Q} \textnormal{ or } \mathbb{R})$ and let $(D,\dag)$ be a semisimple $k$-algebra with an involution.
Let $V$ be a free left $D$-module and let $\psi : V \times V \to D$ be a $(D,\dag)$-skew-Hermitian form.

We say that a $D$-basis $v_1, \dotsc, v_m$ is \textbf{weakly unitary} if $\psi(v_i, v_j) = 0$ for all $i, j \in \{ 1, \dotsc, m \}$ such that $i \neq j$.
 If $\psi$ is non-degenerate, then this implies that $\psi(v_i, v_i) \neq 0$ for all~$i$.

We motivate the definition of signature by comparing it with Hermitian forms over $\mathbb{C}$. Let us say that $ H(x, y) $ is a Hermitian form on a finite-dimensional complex vector space. You can associate a matrix $ A $ to $ H $, such that:
\[
H(x, y) = x^* A y
\]
where $ x^* $ is the conjugate transpose of $ x $, and $ A $ is a Hermitian matrix (i.e., $ A = A^* $).

\noindent We know that any Hermitian matrix can be diagonalized with real eigenvalues. So, after a suitable basis change, the Hermitian form becomes
\[
H(x, x) = \sum_{i=1}^p |x_i|^2 - \sum_{j=1}^q |x_j|^2
\]
for some $ p, q \geq 0 $ and $ p + q = \dim V $.

\begin{definition}
    The \emph{signature} of the Hermitian form $ H $ is the pair $ (p, q) $, where $p,q$ are the dimensions of maximal positive and
negative definite subspaces, respectively. For simplicity, we will only consider $p$ for signature as $q$ is determined by $p$ for non-degenerate forms.
\end{definition} 

\subsection*{Example}

Let $ H : \mathbb{C}^2 \times \mathbb{C}^2 \rightarrow \mathbb{C} $ be defined by:
\[
H(x, y) = x_1 \overline{y_1} - x_2 \overline{y_2}
\]
Then:
\[
H(x, x) = |x_1|^2 - |x_2|^2
\]
So the signature of $ H $ is $ (1, 1) $.

\begin{definition}
Let $V_i$ be a left $\rM_d(\mathbb{C})$-vector space equipped with a non-degenerate $\rM_d(\mathbb{C})$-skew-Hermitian form $\psi_i \colon V_i \times V_i \to \rM_d(\mathbb{C})$.
Let $v_1, \dotsc, v_m$ be a left $\rM_d(\mathbb{C})$-basis for $V$ which is weakly unitary. Let $\alpha_{j1}, \cdots,  \alpha_{jd}$ be the eigenvalues of $\psi_i(v_j,v_j)$ repeated with their algebraic multiplicities.
    We say $\Psi_i$ has \textbf{signature} $r_i$, where $r_i$ is the number of $\alpha_{jl}(1\leqslant j \leqslant m, 1\leqslant l \leqslant d)$ s.t $\textnormal{Im}(\alpha_{jl})>0$.
\end{definition}

\begin{definition}
   Let $(D,\dag)$ be a type IV division $Q$-algebra with positive involution.
Let $V$ be a left $D$-vector space equipped with a non-degenerate $(D,\dag)$-skew-Hermitian form $\psi \colon V \times V \to D$.
Let $v_1, \dotsc, v_m$ be a left $D$-basis for $V$ which is weakly unitary.Let $\alpha^{-1} \colon (\rM_d((\mathbb{C}))^{e},t) \to (D_\bR,\dag)$ be an isomorphism of $\bR$-algebras with involution.
Let $\psi \otimes \mathbb{R} = (\psi_1, \cdots, \psi_e)$ and $V= \oplus_{i=1}^{e}V_i$, where $\psi_i$ is a $\rM_d(\mathbb{C})$-skew-Hermitian form on $V_i$ for $1 \leqslant i \leqslant e$. We say $\psi$ has $\alpha$-\textbf{signature} $(r_1, \cdots r_e)$ where $\psi_i$  has \textbf{signature} $r_i$.
\end{definition}

\begin{definition}
    We define a \textbf{signature} to be string of $e$-integers $(r_1, \cdots ,r_e)$, where $0 \leq r_i \leq dm$ for all $1 \leq i \leq e$. 
\end{definition}

\begin{definition}
    Let $D_0 = \rM_d(\mathbb{C})^e$. Let $\psi_0$ be a nondegenerate $(D_0,t)$-skew-Hermitian form. A \textbf{sign matrix} for $\psi_0$ is a block matrix that belongs to $\rM_{(edm)*(dm)}(\mathbb{C})$ of the form,
    \[ (\varepsilon)=\begin{pmatrix}
         \varepsilon_{11} & 0 & 0 & \cdots  0 \\ 
           0 & \varepsilon_{12} & 0 & \cdots  0 \\ 
          0 & 0 & \varepsilon_{13} & \cdots  0  \\ 
         0 & 0 & 0 & \cdots  \varepsilon_{1m}  \\ 
         \varepsilon_{21} & 0 & 0 & \cdots  0 \\ 
           0 & \varepsilon_{22} & 0 & \cdots  0 \\ 
          0 & 0 & \varepsilon_{23} & \cdots  0  \\ 
         0 & 0 & 0 & \cdots  \varepsilon_{2m}  \\
          \vdots & \vdots & \vdots & \ddots \\
          \varepsilon_{e1} & 0 & 0 & \cdots  0 \\ 
           0 & \varepsilon_{e2} & 0 & \cdots  0 \\ 
          0 & 0 & \varepsilon_{e3} & \cdots  0  \\ 
         0 & 0 & 0 & \cdots  \varepsilon_{em}  \\ 
    \end{pmatrix}, \]  where $\varepsilon_{ij}$ is a $d \times d$ diagonal matrix whose entries are $\pm i$ such that for each fixed $i$, the total number of diagonal entries of $\varepsilon_{ij}=+i$ when $j$ runs from $1 \textnormal{ to }  m$ occurs $r_i$ times, where $r_i$ is a signature. 
\end{definition}

For a general division algebra with involution~$(D,\dag)$, there is no canonical choice of a non-zero element of $D^-$, so there is no natural definition of ``unitary basis'' with respect to a $(D,\dag)$-skew-Hermitian form.

\begin{definition}
In the case $D_0 = \mathbb{H}^e$,  If $V$ is a free left $D_0$-module equipped with a $(D_0,t)$-skew-Hermitian form $\psi_0$. Let $\alpha^{-1} \colon (\mathbb{H}^e,t) \to (D_\bR,\dag)$ be an isomorphism of $\bR$-algebras with involution, then we say that a left $D_0$-basis $v_1, \dotsc, v_m$ of $V$ is $\alpha(\varepsilon)-$\textbf{unitary} if it is weakly unitary and $\alpha(\psi(v_i, v_i)) = (i, \cdots , i)$.

In the case $D_0 = \rM_d(\mathbb{C})^e$,  If $V$ is a free left $D_0$-module equipped with a $(D_0,t)$-skew-Hermitian form $\psi_0$. Let $\alpha^{-1} \colon (\rM_d((\mathbb{C}))^{e},t) \to (D_\bR,\dag)$ be an isomorphism of $\bR$-algebras with involution, then we say that a left $D_0$-basis $v_1, \dotsc, v_m$ of $V$ is $\alpha(\varepsilon)-$\textbf{unitary} for a given sign matrix $(\varepsilon)$ if it is weakly unitary and $\alpha(\psi(v_i, v_i)) = (\varepsilon_{1i}, \cdots , \varepsilon_{ei})$.
\end{definition} 

\noindent \textbf{Note}: It is important to consider the $\alpha(\varepsilon)$-\textbf{unitary} basis, as \cref{conjclass1} does not hold without fixing the signature. In the type III case, the signature does not play a role, so it would be sufficient to refer to the basis as simply $\alpha$-\textbf{unitary}. However, for the sake of uniformity with the type IV case, we adopt the notation $\alpha(\varepsilon)$-\textbf{unitary} throughout.  Our definition in the type IV case is necessarily more intricate in order to fix the signature, as different bases can yield different sign
matrices for the same skew-Hermitian form. Without fixing the signature at this stage, subsequent calculations become significantly more complex.

    

\begin{remark}\label{WRem}
    The definition of signature of a $\rM_d(\mathbb{C})$-skew-Hermitian form involves choosing a weakly unitary basis.  In order for this to be well-defined, we need to know two things: there always exists a weakly unitary basis, and the signature is independent of the choice of weakly unitary basis. 
\end{remark}

\begin{lemma}\label{signwd}
Let $D_0 = \rM_d(\mathbb{C})^e$ and let $t$ denote the involution of $D_0$ which is transpose and complex conjugate on each factor.
Let $V$ be a free left $D_0$-module and let $\psi_0$ be a non-degenerate $(D_0,t)$-skew-Hermitian form $V \times V \to D_0$. Let $\alpha^{-1} \colon ((\rM_d(\mathbb{C}))^{e},t) \to (D_\bR,\dag)$ be an isomorphism of $\bR$-algebras with involution.
\begin{enumerate}
    \item Then there exists a $D_0$-basis $v_1, \dotsc, v_m$ for $V$ which $\{ v_1, \dotsc, v_m \}$ is weakly unitary w.r.t $\psi_0$. 
    \item Given two weakly unitary bases $\{ e_1, \dotsc, e_m \} \text{ and } \{ f_1, \dotsc, f_m \}$ of $V$ w.r.t. $\psi_0$. Let  $[\psi]_{e}$ and $[\psi]_{f} \in \rM_{dm}(\mathbb{C})$  denote the matrix representations made up of blocks $\psi(e_i,e_j)$, $\psi(f_i,f_j)$ in the bases $\{e_i\}$ and $\{f_i\}$, respectively. Then the number of positive imaginary eigenvalues of $[\psi]_{e}$ and $[\psi]_{f}$ is the same.
\end{enumerate}
\end{lemma}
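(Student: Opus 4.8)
\textbf{Proof proposal for \cref{signwd}.}

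The plan is to reduce everything to a single $\rM_d(\bC)$-factor and then to a statement about ordinary Hermitian forms over $\bC$. Since $D_0 = \rM_d(\bC)^e$ and $\psi_0 = (\psi_1,\dots,\psi_e)$ decomposes with $V = \bigoplus_{i=1}^e V_i$, a $D_0$-basis of $V$ is the same as a choice of $\rM_d(\bC)$-basis of each $V_i$, and weak unitarity is checked factor by factor; so it suffices to treat the case $e = 1$, i.e. a non-degenerate $(\rM_d(\bC),t)$-skew-Hermitian form $\psi$ on a free left $\rM_d(\bC)$-module $V$ of rank $m$. For existence (part 1), I would argue by induction on $m$. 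Pick any nonzero $v \in V$; by non-degeneracy and \cref{2ideal} (applied after the standard Morita equivalence that turns a left $\rM_d(\bC)$-module into a $d$-fold copy of a $\bC$-vector space) one can find $v_1$ with $\psi(v_1,v_1) \neq 0$ — concretely, if $\psi(v,v) = 0$ for the first try, use a polarization-type identity to produce a vector on which the form is nonzero, exactly as one does for ordinary skew-Hermitian forms. Once $\psi(v_1,v_1) = h_1$ is an invertible element of $\rM_d(\bC)$ (invertibility follows because $\psi(v_1,-)$ restricted to $\rM_d(\bC)v_1$ is non-degenerate), the left submodule $\rM_d(\bC)v_1$ is a non-degenerate subspace, so $V = \rM_d(\bC)v_1 \perp W$ with $W = \{w : \psi(v_1,w) = 0\}$ a free left $\rM_d(\bC)$-module of rank $m-1$ carrying the restricted (still non-degenerate) form; apply the inductive hypothesis to $W$.

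For part 2 (well-definedness of the signature), again reduce to $e = 1$. Given a weakly unitary basis $\{e_1,\dots,e_m\}$, the block-diagonal Gram matrix $[\psi]_e = \diag(h_1,\dots,h_m)$ with $h_i = \psi(e_i,e_i) \in \rM_d(\bC)$; the skew-Hermitian condition $\psi(x,y) = -\psi(y,x)^t$ (conjugate-transpose on $\rM_d(\bC)$) forces each $h_i^* = -h_i$, so $i\,h_i$ is Hermitian, hence has real eigenvalues, and "number of positive imaginary eigenvalues of $h_i$" is exactly "number of positive eigenvalues of the Hermitian matrix $ih_i$". The key move is to globalize: the form $\psi$ is $\rM_d(\bC)$-sesquilinear, and via Morita equivalence $V \cong \bC^d \otimes_{\bC} V'$ where $V'$ is a $\bC$-vector space of dimension $m$, and $\psi$ corresponds to $\one_d \otimes \psi'$ for a non-degenerate skew-Hermitian form $\psi'$ on $V'$; equivalently, the full $dm \times dm$ Gram matrix in any weakly unitary basis, after multiplication by $i$, is a Hermitian matrix whose signature (in the ordinary sense) is $d$ times the signature of the $\bC$-form $i\psi'$. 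Now Sylvester's law of inertia for Hermitian forms over $\bC$ says the number of positive eigenvalues is a basis-independent invariant of the Hermitian form $i\psi$ on $V_\bR$ (or on $V$ as a $\bC$-space after Morita) — it does not depend on whether the diagonalizing basis is of the special weakly unitary $D_0$-type or not. Hence the count agrees for $\{e_i\}$ and $\{f_i\}$.

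Let me restate the logical skeleton in the order I would write it: (i) factor-by-factor reduction to $e=1$; (ii) Morita equivalence identifying $(\rM_d(\bC)\text{-module}, \psi)$ with $(\bC\text{-space}, \psi')$ and reducing the Gram matrix of a weakly unitary $D_0$-basis to $d$ copies of a Hermitian block decomposition of $i\psi'$; (iii) existence via the induction on rank sketched above (orthogonal splitting off a non-isotropic line); (iv) well-definedness via Sylvester's law of inertia applied to the Hermitian form $i\psi'$, noting that a weakly unitary $D_0$-basis is in particular an orthogonal basis for $i\psi'$, so its inertia count is the genuine signature, which is basis-free. I expect the main obstacle to be step (ii): making the Morita equivalence interact cleanly with the involution $t$ (transpose-conjugate on $\rM_d(\bC)$ versus ordinary conjugation on $\bC$) so that "$(\rM_d(\bC),t)$-skew-Hermitian" really does correspond to "(scalar) $\bC$-skew-Hermitian" on the Morita partner — one has to pin down the compatible conjugate-linear identification $\rM_d(\bC) \cong \Hom_{\bC}(\bC^d,\bC^d)$ and check that $\psi(v_i,v_i)^* = -\psi(v_i,v_i)$ translates to the desired property of the corresponding Hermitian $d \times d$ block. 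Once that bookkeeping is done, both parts follow from standard Hermitian linear algebra (diagonalization, \cref{PSP}-type positivity arguments, and Sylvester's law), with no further difficulty.
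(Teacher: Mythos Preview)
Your treatment of part~2 is fine and matches the paper: both reduce to the congruence $[\psi]_e = P^*[\psi]_f P$ for a transition matrix $P \in \rM_{dm}(\bC)$ and then invoke (implicitly in the paper, explicitly in your version) Sylvester's law of inertia for the Hermitian matrix $i[\psi]$.

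Part~1 has a real gap. A polarization identity gives you $v_1$ with $h_1 := \psi(v_1,v_1) \neq 0$, but your claim that $h_1$ is then automatically invertible (``because $\psi(v_1,-)$ restricted to $\rM_d(\bC)v_1$ is non-degenerate'') is circular: non-degeneracy of that restriction is \emph{equivalent} to invertibility of $h_1$, and neither follows from non-degeneracy of $\psi$ on all of $V$. Concretely, on $V = \rM_2(\bC)^2$ with the non-degenerate hyperbolic form $\psi((a_1,a_2),(b_1,b_2)) = a_1 b_2^* - a_2 b_1^*$ and $v_1 = (1,\, i e_{11})$, one has $\psi(v_1,v_1) = -2i e_{11}$, which is nonzero but singular. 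The paper confronts exactly this point in a separate Claim proved immediately after the lemma: starting from $X,Y$ with $\psi_i(X,Y) = I_d$ (obtained via \cref{2ideal}), it builds $z = X + UY$ for an explicitly constructed matrix $U$ (using a block decomposition of $\psi_i(X,X)$, $\psi_i(Y,Y)$ together with \cref{PSP}) so that $\psi_i(z,z) \in \mathrm{GL}_d(\bC)$. Your Morita route could in principle sidestep this --- an orthogonal $\bC$-basis of $V' \cong \bC^{dm}$ (note the dimension is $dm$, not $m$ as you wrote), grouped into $m$ blocks of $d$, does lift to a weakly unitary $D_0$-basis --- but as written you invoke Morita only in passing and never carry out that lift; the actual argument you give for invertibility of $h_1$ does not work.
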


\begin{proof}
 Write $B_0 = \rM_d(\mathbb{C})$. Let $u_1, \dotsc, u_{e}$ denote the standard $\mathbb{R}$-basis of $D_0 = \rM_d(\mathbb{C})^e$.

Let $V_i = u_i V$.
Then $V = \bigoplus_{i=1}^{e} V_i$ and each $V_i$ is a free left $B_0$-module.
Because $V$ is a free left $D_0$-module, $\textnormal{rk}_{B_0}(V_1) = \dotsb = \textnormal{rk}_{B_0}(V_{e})$. Let $m$ denote this rank.  Because $\psi : V \times V \to D_0$ is $\mathbb{C}$-bilinear, it takes the form \[ \psi((x_1, \dotsc, x_{e}), (y_1, \dotsc, y_{e})) = (\psi_1(x_1, y_1), \dotsc, \psi_e(x_{e}, y_{e})) \text{ for all } x_i, y_i \in V_i, \] where $\psi_i : V_i \times V_i \to B_0$ are some non-degenerate $(B_0, t)$-skew-Hermitian forms.

We prove by induction on $m$ that there is a weakly-unitary $B_0$-basis for $V_i$, $v_{i1}, \dotsc, v_{im}$ using the Gram-Schmidt method.
  First we \textbf{claim} that there exists $z \in V_i$ such that $\psi_i(z,z) \in \textnormal{Gl}_d(\mathbb{C})$. We will prove this claim after the proof of this lemma.
Since skew hermitian form are diagonalisable, so there exists $D \in Gl_d(\mathbb{C})$ s.t 
\[\psi(Dz,Dz)= \diag(\pm\lambda_1i, \cdots \pm\lambda_di), \]
with $\lambda_1, \cdots ,\lambda_d >0$.
Take 
\[B = \diag\bigg(\frac{1}{\sqrt\lambda_1}, \cdots \frac{1}{\sqrt\lambda_d} \bigg),\]
then $\psi(BDz,BDz)=\diag(\pm i, \cdots \pm i)$. Taking $J=BD$, there exists $J$ s.t $\psi_i(Jz,Jz) = \tilde{I}_s$, where $\tilde{I}_s$ is a invertible $d$ by $d$ matrix with diagonal entries $\pm i$.
  Let $v_{i1}=Jz$. 
 
Let $V_i' = \{ v \in V_i : \psi_i(v_{i1}, v) = 0 \} = \{ v \in V_i : \psi_i(v, v_{i1}) = 0 \}$, which is a left $B_0$-submodule of $V_i$.
 For every $b \in B_0 \setminus \{0\}$, we have
\begin{equation} \label{eqnb1}
\psi_i(bv_{i1}, v_{i1}) = b\psi_i(v_{i1}, v_{i1}) \neq 0
 \end{equation}
and so $B_0v_{i1} \cap V_i' = \{ 0 \}$.
 For every $v \in V_i$, we have
\[ v - \psi_i(v, v_{i1}) \psi_i(v_{i1}, v_{i1})^{-1} v_{i1} \in V_i'. \]
Hence $V_i = B_0v_{i1} \oplus V_i'$ as a direct sum of left $B_0$-modules.

By \eqref{eqnb1}, $bv_{i1} \neq 0$ for all $b \in B_0 \setminus \{0\}$. Hence $\dim_\mathbb{R}(B_0v_{i1}) = 2d^2$ and so $\dim_k(V_i') = 2d^2(m-1)$.
Every $B_0$-module whose $\mathbb{R}$- dimension is a multiple of $2d^2$ is a free $B_0$-module, so $B_0v_{i1}$ and $V_0'$ are free left $B_0= \rM_d(\mathbb{C})$-modules.
 By induction, there is a weakly unitary $B_0$-basis $v_{i2}, \dotsc, v_{im}$ for $V_i'$. This completes the proof of part 1.

For a weakly unitary basis $\{e_i\}$ of $V$, the signature of the skew-Hermitian form $\psi$ is defined by counting the number of positive, negative, and zero eigenvalues of the associated matrix representation of $\psi$ in this basis (nonzero only when $\psi$ is non-degenerate). Consider two weakly unitary bases $\{e_i\}$ and $\{f_i\}$ of $V$. Let $P \in \mathrm{M}_m(\mathrm{M}_d(\mathbb{C})) \cong \mathrm{M}_{md}(\mathbb{C})$ be the transition matrix relating these bases:
\[
e_i = \sum_j P_{ij} f_j,
\]
where $P_{ij} \in \mathrm{M}_{d}(\mathbb{C})$. The matrix representation of $\psi$ with respect to the basis $\{e_i\}$ is related to its representation with respect to the basis $\{f_i\}$ via the transition matrix:
\[
[\psi]_e = P^* [\psi]_f P,
\]
where $[\psi]_e$ and $[\psi]_f$ denote the matrices made up of blocks of $\psi(e_i, e_j)$ and $\psi(f_i, f_j)$ in the respective bases. The signs of the imaginary parts of the eigenvalues of such matrices are invariant under this transformation, ensuring that the signature is preserved.

\end{proof}

\begin{claim}
  Let $\psi_i$ and $V_i$ be as in the proof of \cref{signwd}. Then there exists $z \in V_i$ such that $\psi_i(z,z) \in \textnormal{Gl}_d(\mathbb{C})$. 
\end{claim}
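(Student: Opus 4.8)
The plan is to produce such a $z$ by a contradiction argument, exploiting non-degeneracy of $\psi_i$ together with the matrix structure of $B_0 = \rM_d(\mathbb{C})$. Suppose, for contradiction, that $\psi_i(z,z) \notin \GL_d(\mathbb{C})$ for every $z \in V_i$, i.e.\ $\det \psi_i(z,z) = 0$ for all $z$. Since $V_i$ is a free left $B_0$-module, I would pick any nonzero $w \in V_i$; by non-degeneracy there is $w' \in V_i$ with $\psi_i(w,w') \neq 0$, and the skew-Hermitian relation $\psi_i(w',w) = -\psi_i(w,w')^t$ (transpose-conjugate on $\rM_d(\mathbb{C})$) shows this pairing is "non-degenerate on the span" in the usual sense. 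The idea is then to play $w$ and $w'$ off against each other: consider $z_{b} = bw + w'$ for $b \in B_0$ and study the matrix-valued function $b \mapsto \psi_i(bw+w', bw+w')$.

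The key computation is the expansion
\[
\psi_i(bw+w',\,bw+w') = b\,\psi_i(w,w)\,b^{t} + b\,\psi_i(w,w') + \psi_i(w',w)\,b^{t} + \psi_i(w',w'),
\]
using $\psi_i(ax,by) = a\psi_i(x,y)b^{t}$. If $\det$ of this vanishes identically in $b \in \rM_d(\mathbb{C})$, that is a strong polynomial constraint. First I would reduce to the case $\psi_i(w,w) = 0$ for all $w$: if some $\psi_i(w,w) \neq 0$ but is singular, one can still use it; but the cleanest route is to note that if $\psi_i(w,w)$ is singular for all $w$, then in particular the "diagonal" of $\psi_i$ is a singular matrix-valued quadratic form, and combine this with the skew-Hermitian symmetry $\psi_i(w,w) = -\psi_i(w,w)^{t}$, which says $\psi_i(w,w)$ is always skew-Hermitian (i.e.\ $i$ times a Hermitian matrix). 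A skew-Hermitian matrix that is singular has a nonzero kernel vector; one then perturbs $w$ within $V_i$ using the action of elementary matrices $b \in \rM_d(\mathbb{C})$ and the nonvanishing of $\psi_i(w,w')$ for a suitable $w'$ to increase the rank, contradicting maximality. Concretely: choose $w$ with $\psi_i(w,w)$ of maximal rank $r < d$ among all diagonal values; pick $w'$ with $\psi_i(w,w') \neq 0$ (exists by non-degeneracy, after possibly replacing $w'$ by $bw'$); then for small generic $t$, $\psi_i(w + tw', w + tw') = \psi_i(w,w) + t(\psi_i(w,w') + \psi_i(w',w)) + t^2\psi_i(w',w')$ has rank $\geq r$ for all $t$ and rank $> r$ for generic $t$ unless the linear term maps the kernel of $\psi_i(w,w)$ into its image — a condition one rules out by further varying $w'$ over $B_0 w' $ and using that $\psi_i(w, \cdot)$ restricted to $B_0 w'$ is, by the matrix structure, surjective onto a full copy of $\rM_d(\mathbb{C})$ when $\psi_i(w,w')\neq 0$ (here \cref{2ideal}, or rather its left-module analogue, gives surjectivity of $\psi_i(w,-)$ once it is nonzero, since $\rM_d(\mathbb{C})$ is simple so a nonzero $B_0$-linear map to $B_0$ is surjective).

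The main obstacle I anticipate is the rank-increment step: showing that if $\psi_i(w,w)$ has rank $r < d$ then one can genuinely bump the rank up by a perturbation $w \mapsto w + bw'$, rather than the perturbation only shuffling things within a fixed-rank locus. This is where the skew-Hermitian (not merely bilinear) nature is essential — the diagonal values lie in the real subspace $i\cdot\mathrm{Herm}_d(\mathbb{C})$, whose singular locus has positive codimension, so a "generic" diagonal value is invertible, and one must check the family $\{\psi_i(w+bw',w+bw') : b \in B_0\}$ is not contained in the singular locus. I would establish this by supposing it were, deriving that $\psi_i(w',w') $, $\psi_i(w,w')$, $\psi_i(w,w)$ all simultaneously lie in a common isotropic-type configuration, and contradicting non-degeneracy via the pairing $\Phi$ of \cref{2ideal}. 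Once the claim is in hand, the Gram--Schmidt induction in \cref{signwd} goes through as written, and Sylvester's law of inertia applied to the congruence $[\psi]_e = P^*[\psi]_f P$ (noting $\psi(e_i,e_i)$, and hence the whole matrix up to the block-diagonal reduction, is $i$ times Hermitian) finishes the well-definedness of the signature.
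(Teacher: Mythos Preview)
Your approach via rank-maximality and contradiction is genuinely different from the paper's, which is entirely constructive. The paper first invokes \cref{2ideal} to produce $x,y \in V_i$ with $\psi_i(x,y) = I_d$, then normalizes by an invertible $Q$ so that $Y := Qy$ satisfies $\psi_i(Y,Y) = R$, a diagonal matrix with entries $\pm i$ in the first $r$ slots and $0$ elsewhere, while $X := (Q^\dag)^{-1}x$ still has $\psi_i(X,Y) = I_d$. Writing $P = \psi_i(X,X)$, one computes $\psi_i(X+UY,\, X+UY) = P + U^\dag - U + URU^\dag$, and the proof then chooses the block entries of $U$ explicitly --- using \cref{PSP} to perturb by $itM$ with $M$ Hermitian positive definite and $t$ large --- so that this matrix becomes block-diagonal with each block invertible. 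No genericity or maximality argument appears; the construction simply writes down $z = X + UY$.

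Your plan has a concrete error at the surjectivity step. You claim that ``a nonzero $B_0$-linear map to $B_0$ is surjective, since $\rM_d(\mathbb{C})$ is simple.'' Simplicity of $\rM_d(\mathbb{C})$ only rules out proper \emph{two-sided} ideals; as a one-sided module over itself $\rM_d(\mathbb{C})$ is far from simple when $d>1$, and the image of $\psi_i(w,-)\colon V_i \to B_0$ is merely a one-sided ideal. Concretely, $\psi_i(w,bw') = \psi_i(w,w')\,b^\dag$ ranges over $\psi_i(w,w')\cdot \rM_d(\mathbb{C})$, which equals $\rM_d(\mathbb{C})$ only when $\psi_i(w,w')$ is already invertible --- not merely nonzero. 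The fix is to choose $w$ with trivial left $B_0$-annihilator (e.g.\ a basis vector of the free module $V_i$): then non-degeneracy does force $\psi_i(w,-)$ to be surjective onto $B_0$, and your perturbation $w \mapsto w + tz$ with $\psi_i(w,z)$ prescribed can indeed increase the rank of $\psi_i(w,w)$. But you must then reconcile ``$w$ has trivial annihilator'' with ``$\psi_i(w,w)$ has maximal rank among all diagonal values,'' which is an extra step your proposal does not address. The paper's explicit construction sidesteps this bookkeeping entirely.
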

\begin{proof}\label{QQ}  By \cref{2ideal}, we can choose $x,y \in V_i$ such that $\psi_i(x,y) = \textnormal{I}_d$ as $\psi  $ is non-degenerate (There is a minor gap at the corresponding step in the proof of \citep[Lemma 3.5, Case $d=2$, part~(i)]{Lattices}.  It is claimed there that the image of
$\psi_i : V_i \times V_i \to B_0$ is a two-sided ideal in $B_0$, and
this is used to prove that the image is all of $B_0$.  However, it is
not easy to prove that the image of $\psi_i$ is a two-sided ideal until \emph{after} proving that it is all of $B_0$ (it is not obvious that
$\psi_i(V_i \times V_i)$ is closed under addition). This gap in \citep{Lattices} can be fixed by applying this paper's \cref{2ideal} to show directly that $\psi_i(V_i \times V_i) = B_0$.). 

Now there exists $Q$ s.t $\psi_i(Qy,Qy)= \textnormal{R}$, where R is a diagonal matrix with entries $+i$, followed by $-i$ together making up the first $r$ diagonal entries and remaining $d-r$ entries $0$. We get  $\psi_i((Q^{\dag})^{-1}x,Qy) = \textnormal{I}_d$. Now let  $\psi_i((Q^{\dag})^{-1}x,(Q^{\dag})^{-1}x)= \textnormal{P}$. Letting $X=(Q^{\dag})^{-1}x$ and $Y=Qy$. If either $P$ or $R$ is invertible then we are done. If not then we know we can write $R$ and $P$ in blocks of size $r, d-r$ as follows,
\[R = \begin{pmatrix}
\tilde{I}_r & 0 \\
0 & 0 
\end{pmatrix} , P = \begin{pmatrix}
P_1 & P_2 \\
-{P_2}^{\dag} & C^{\dag}-C 
\end{pmatrix},\]
so let $z=X+UY$, where 
\[U= \begin{pmatrix}
K & P_2 \\
0 & D 
\end{pmatrix} \] s.t $D=-C-\frac{i}{2} \textnormal{I}_{d-r}$ and $P_1= \begin{pmatrix}
    A_1 & A_2 \\
    -A_2^{\dag} & A_3
\end{pmatrix}$, where $A_1$ has same size as number of positive $i$ in $\tilde{I}_r$ and similarly $K= \begin{pmatrix}
    K_1 & K_2 \\
    0 & K_3
\end{pmatrix}$, where $K_1,K_2$ and $K_3$ will be chosen later.
Note that $(C+D)^{\dag}-(C+D)=i\textnormal{I}_{d-r}$. We have
 \begin{align}\label{I}
     \psi_i(X+UY,X+UY) &= \psi_i(X,X) + \psi_i(X,Y)U^{\dag} + U\psi_i(Y,X) + U\psi_i(Y,Y) U^{\dag} \\ \nonumber
     &= P + U^{\dag} - U +URU^{\dag} \\ \nonumber
     &= \begin{pmatrix}
P_1+K^{\dag}-K+K\tilde{I}_rK^{\dag} & P_2-B \\
-(P_2-B)^{\dag} & (C+D)^{\dag}-(C+D) 
\end{pmatrix} \\
&= \begin{pmatrix}
    L & 0 \\
    0 & i\textnormal{I}_{d-r}
\end{pmatrix},
 \end{align}
 where $L =  \begin{pmatrix}
     A_1-iK_2K_2^{\dag}+K_1^{\dag}-K_1+iK_1K_1^{\dag} & A_2-K_2(iK_3^{\dag}+I) \\
     -(A_2-K_2(iK_3^{\dag}+I))^{\dag} & A_3 +K_3^{\dag}-K_3-iK_3K_3^{\dag}
 \end{pmatrix}$.

Choose $K_3=\frac{1}{2} A_3+i  tM $, where $M$ is positive definite hermitian matrix and $t$ is really large real number such that $K_3$ is positive definite  skew hermitian (meaning $iK_3^{\dag}$ is positive definite hermitian) as in \cref{PSP}. Now $(iK_3^{\dag}+I)$ is invertible as $iK_3^{\dag}$ is positive definite hermitian and so is $I$. As $A_3 =\frac{1}{2} (A_3-{A_3}^{\dag})$, we have $ A_3 +K_3^{\dag}-K_3-iK_3K_3^{\dag}= -i(2 t M +K_3K_3^{\dag})$ which is invertible, since $K_3K_3^{\dag}$ is semi-positive definite hermitian and $2tM$ is positive definite hermitian. Now choose $K_2=A_2 (iK_3^{\dag}+I)^{-1}$. Now taking $A_1+iK_2K_2^{\dag}=C_1-C_1^{\dag}$ for some $C_1$, and choose $K_1=C_1-i t M$, we have $ A_1+iK_2K_2^{\dag}+K_1^{\dag}-K_1+iK_1K_1^{\dag}= i(2  tM +K_1K_1^{\dag})$ which is invertible by same argument as earlier. 
\linebreak \noindent Hence $L= \begin{pmatrix}
 i(2 t M +K_1K_1^{\dag} & 0\\
 0& -i(2 t M +K_3K_3^{\dag}))
\end{pmatrix}$ is invertible.  
Choosing $z = X+UY$, we obtain $\psi_i(z,z) \in \textnormal{Gl}_d(\mathbb{C})$.
\end{proof}

\begin{lemma}{\citep[Lemma 3.1]{Lattices}} \label{tr-skew-hermitian-form}
Let $(D, \dag)$ be a semisimple $k$-algebra with an involution.
Let $V$ be a left $D$-module.
Then the map $\psi \mapsto {\Trd_{D/k}} \circ \psi$ is a bijection between the set of $(D,\dag)$-skew-Hermitian forms on $V$ and the set of $(D,\dag)$-compatible skew-symmetric forms on~$V$.
\end{lemma}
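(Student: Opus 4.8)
**Plan for proving Lemma \ref{tr-skew-hermitian-form} (the bijection $\psi \mapsto \Trd_{D/k}\circ\psi$ between $(D,\dag)$-skew-Hermitian forms and $(D,\dag)$-compatible skew-symmetric forms).**

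The plan is to exhibit an explicit two-sided inverse to the map $\psi \mapsto \phi := \Trd_{D/k}\circ\psi$ and check that both constructions land in the claimed sets. First I would verify that $\phi$ really is a $(D,\dag)$-compatible skew-symmetric $k$-bilinear form: $k$-bilinearity is immediate from $k$-bilinearity of $\psi$ and $k$-linearity of the reduced trace; skew-symmetry follows from $\psi(y,x) = -\psi(x,y)^\dag$ together with the identity $\Trd_{D/k}(a^\dag) = \Trd_{D/k}(a)$ recorded earlier in the excerpt, giving $\phi(y,x) = \Trd_{D/k}(\psi(y,x)) = -\Trd_{D/k}(\psi(x,y)^\dag) = -\Trd_{D/k}(\psi(x,y)) = -\phi(x,y)$; and $(D,\dag)$-compatibility $\phi(ax,y) = \phi(x,a^\dag y)$ follows from $\psi(ax,y) = a\,\psi(x,y)$, $\psi(x,a^\dag y) = \psi(x,y)(a^\dag)^\dag = \psi(x,y)a$, and the conjugation-invariance of the reduced trace, $\Trd_{D/k}(a\,\psi(x,y)) = \Trd_{D/k}(\psi(x,y)\,a)$.

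Next I would construct the inverse. The key point is that the reduced trace pairing $(a,b) \mapsto \Trd_{D/k}(ab)$ is a non-degenerate $k$-bilinear form on the semisimple $k$-algebra $D$ (standard; see \citep[sec.~9]{Rei75}), so for a fixed $x,y \in V$ the $k$-linear functional $a \mapsto \phi(ax,y)$ on $D$ is represented by a unique element of $D$, which I will call $\psi(x,y)$: that is, $\psi(x,y)$ is characterized by $\Trd_{D/k}\!\big(a\,\psi(x,y)\big) = \phi(ax,y)$ for all $a \in D$. One checks $\psi$ is $k$-bilinear (uniqueness in a non-degenerate pairing transports the bilinearity of $\phi$), and that $\Trd_{D/k}\circ\psi = \phi$ by taking $a = 1$. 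The sesquilinearity $\psi(ax,by) = a\,\psi(x,y)\,b^\dag$ is then forced: for all $c \in D$, $\Trd_{D/k}(c\,\psi(ax,by)) = \phi(cax,by) = \phi(b^\dag cax, y) = \Trd_{D/k}(b^\dag ca\,\psi(x,y)) = \Trd_{D/k}(c\,a\,\psi(x,y)\,b^\dag)$, using $(D,\dag)$-compatibility twice and cyclicity of the trace, and non-degeneracy then gives the claim. Finally the skew-Hermitian symmetry $\psi(y,x) = -\psi(x,y)^\dag$ follows by a similar computation: $\Trd_{D/k}(c\,\psi(y,x)) = \phi(cy,x) = -\phi(x,cy) = -\phi((c^\dag)^\dag x, cy) = -\phi(c^\dag x \cdot \text{(reorganize)}, \ldots)$; more cleanly, use skew-symmetry of $\phi$ and compatibility to rewrite $\phi(cy,x)$ in terms of $\phi(\cdot\, x, y)$ and compare with $\Trd_{D/k}(-c\,\psi(x,y)^\dag) = -\Trd_{D/k}(\psi(x,y)c^\dag)$ via conjugation-invariance.

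These two constructions are mutually inverse essentially by the uniqueness built into the non-degenerate trace pairing, which finishes the proof. The only real subtlety — and the step I would be most careful about — is the non-degeneracy of the reduced trace pairing on $D$; over a general field $k$ this needs $D$ semisimple (it can fail in the presence of nilpotents/inseparability), but that is exactly our standing hypothesis, and in our applications $D$ is a division algebra over a number field, so the pairing is non-degenerate. Since this is \citep[Lemma 3.1]{Lattices}, I would in fact just cite that reference and include at most the one-line verification that $\phi$ is $(D,\dag)$-compatible skew-symmetric, leaving the inverse construction to the citation.
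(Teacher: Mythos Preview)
Your proposal is correct, and you have anticipated exactly what the paper does: this lemma is stated with the citation \citep[Lemma~3.1]{Lattices} in the header and given no proof in the present paper, so simply citing the reference is the paper's entire ``proof.'' Your sketch of the actual argument (non-degeneracy of the reduced trace pairing on a semisimple algebra to build the inverse) is the standard one and is what the cited reference does; the only place your write-up is slightly garbled is the skew-Hermitian check, which goes cleanly as $\Trd(c\,\psi(y,x)) = \phi(cy,x) = -\phi(x,cy) = -\phi(c^\dag x,y) = -\Trd(c^\dag\psi(x,y)) = -\Trd(\psi(x,y)^\dag c) = -\Trd(c\,\psi(x,y)^\dag)$ for all $c\in D$.
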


\begin{remark}
Note that for a fixed $d,e,m$, there are finitely many \textbf{sign matrix} associated to it.   
\end{remark}

\noindent Example: Consider $\psi(v_1,v_2)=v_1A{v_2}^{\dag}$ a $\rM_2(\mathbb{C})$-skew-Hermitian form (d=2,m=2,e=1), where $A=[\psi]= \begin{pmatrix}
    \begin{pmatrix}
         i & 0 \\ 0& -i
    \end{pmatrix} & \begin{pmatrix}
         0 & 0 \\ 0& 0      
    \end{pmatrix} \\
    \begin{pmatrix}
         0 & 0 \\ 0& 0      
    \end{pmatrix} & \begin{pmatrix}
         i & 0 \\ 0& -i
    \end{pmatrix}
    \end{pmatrix}$. 
Consider standard basis $e_1= \begin{pmatrix}
    1 & 0 & 0 & 0 \\
    0 & 1 & 0 & 0
\end{pmatrix},e_2=\begin{pmatrix}
   0 & 0 & 1 & 0 \\
    0 & 0 & 0 & 1
\end{pmatrix}$. Then $\psi(e_1,e_1)=\begin{pmatrix}
    i & 0 \\ 0& -i
\end{pmatrix}$ and $\psi(e_2,e_2)=\begin{pmatrix}
    i & 0 \\ 0& -i
    \end{pmatrix}$. So we  get signature 2.
    Consider another basis $f_1=\begin{pmatrix}
    1 & 0 & 0 & 0 \\
    0 & 0 & 1 & 0
\end{pmatrix},f_2=\begin{pmatrix}
   0 & 1 & 0 & 0 \\
    0 & 0 & 0 & 1
\end{pmatrix}$. Then $\psi(f_1,f_1)=\begin{pmatrix}
    i & 0 \\ 0& i
\end{pmatrix}$ and $\psi(f_2,f_2)=\begin{pmatrix}
    -i & 0 \\ 0& -i
\end{pmatrix}$. So we get signature 2. 
We get unitary matrix $P= 
\begin{pmatrix}
1 & 0 & 0 & 0 \\
0 & 0 & 1 & 0 \\
0 & 1 & 0 & 0 \\
0 & 0 & 0 & 1
\end{pmatrix}$, such that $[\psi]_{e_1,e_2} = P^{\dag} [\psi]_{f_1,f_2} P$ that matches our previous proof.

The following lemma is an adaptation for types III and IV from \citep[Lemma 3.4]{Lattices}, which was originally stated for types I and II and used in the proof of \citep[Proposition 7.1]{Lattices}. With this adaptation and other generalisation, generalised \citep[Proposition 7.1]{Lattices} holds in our type III and IV case as well. 
\begin{lemma} \label{semi-orthogonal-normalise}
Let $(D,\dag)$ be a division $\mathbb{Q}$-algebra with positive involution of type III or IV.
Let $\alpha^{-1} \colon ((\mathbb{H})^{e},t) \to (D_\bR,\dag)$ be an isomorphism of $\bR$-algebras with involution for type III and let $\alpha^{-1} \colon ((M_d(\mathbb{C}))^{e},t) \to (D_\bR,\dag)$ be an isomorphism of $\bR$-algebras with involution for type IV. 
Let $V$ be a left $D$-vector space equipped with a non-degenerate $(D,\dag)$-skew-Hermitian form $\psi \colon V \times V \to D$.
Let $v_1, \dotsc, v_m$ be a left $D$-basis for $V$ which is weakly unitary.

Then there exist $s_1, \dotsc, s_m \in D_\bR^\times$ and  a sign matrix $(\varepsilon)$ (for type IV)
such that $s_1^{-1} v_1, \dotsc, s_m^{-1} v_m$ form a $\alpha(\varepsilon)$-unitary $D_\bR$-basis for $V_\bR$ and, for all $i$,
\[ \abs{s_i}_D \leq (2ke)^{1/4} \abs{\psi(v_i, v_i)}_D^{1/2} \],
where $k=1$ for type III and $k=d$ for type IV
\end{lemma}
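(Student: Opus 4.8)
The plan is to diagonalise $\psi(v_i,v_i)$ one basis vector at a time. Fix $i$ and work in $D_\bR$, which under $\alpha$ is $\bH^e$ (type III) or $\rM_d(\bC)^e$ (type IV); write $\alpha(\psi(v_i,v_i)) = (w_{i1},\dotsc,w_{ie})$. Since $\psi$ is skew-Hermitian and $v_1,\dotsc,v_m$ is weakly unitary, each $w_{ij}$ is a non-zero skew-Hermitian element (in the quaternion case a non-zero purely imaginary quaternion; in the matrix case $w_{ij}^* = -w_{ij}$ and invertible because non-degeneracy forces $\psi(v_i,v_i)$ invertible, cf.\ \cref{signwd} and \cref{QQ}). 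First I would, for each factor $j$, produce $t_{ij} \in (\rM_d(\bC))^\times$ (resp.\ $\bH^\times$) with $t_{ij}\, w_{ij}\, t_{ij}^{*} = \varepsilon_{ji}$, where $\varepsilon_{ji}$ is a diagonal matrix with entries $\pm i$ (resp.\ the quaternion $i$): in the matrix case this is exactly the computation already carried out inside the proof of \cref{signwd} — diagonalise the skew-Hermitian $w_{ij}$ as $\diag(\pm\lambda_1 i,\dotsc,\pm\lambda_d i)$ with $\lambda_l > 0$ by a unitary conjugation, then rescale by $\diag(\lambda_l^{-1/2})$ — and in the quaternion case $w_{ij}$ is conjugate in $\bH^\times$ to $\lambda i$ for a unique $\lambda>0$, so a scalar rescaling by $\lambda^{-1/2}$ finishes it. Set $s_i := \alpha^{-1}(t_{i1},\dotsc,t_{ie})^{-1} \in D_\bR^\times$, so that $s_i^{-1}v_i$ has $\alpha(\psi(s_i^{-1}v_i, s_i^{-1}v_i)) = (\varepsilon_{1i},\dotsc,\varepsilon_{ei})$; collecting the $\varepsilon_{ji}$ over all $i$ gives the sign matrix $(\varepsilon)$, and weak unitarity is preserved since $\psi(s_i^{-1}v_i, s_j^{-1}v_j) = s_i^{-1}\psi(v_i,v_j)s_j^{-*} = 0$ for $i\neq j$. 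This proves (i) and (ii) of the claim, i.e.\ that $s_1^{-1}v_1,\dotsc,s_m^{-1}v_m$ is an $\alpha(\varepsilon)$-unitary basis.

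It remains to bound $\abs{s_i}_D$. The key point is to relate the norm of $s_i$ to the norm of $w_i = \psi(v_i,v_i)$ via the multiplicativity-type estimates of \cref{Normbound} and the submultiplicativity of \cref{lm}. Concretely, from $s_i^{-1} w_i (s_i^{-1})^{\dag} = \alpha^{-1}(\varepsilon_{1i},\dotsc,\varepsilon_{ei}) =: \eta_i$, which satisfies $\eta_i\eta_i^{\dag} = 1$ (the $\varepsilon$'s are $\pm i$ times identity blocks, so $\varepsilon\varepsilon^* = I$), one computes the reduced norm: $\abs{\Nrd(s_i)}^2$ relates to $\abs{\Nrd(w_i)}$ up to the explicit constant coming from $\abs{\Nrd(\eta_i)} = 1$ on each factor. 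Then I would invert \cref{Normbound}: on $D_\bR$ the norm $\abs{\cdot}_D$ and $\abs{\Nrd_{D_\bR/\bR}(\cdot)}$ are comparable on any fixed-dimensional simple factor, and unwinding the inequalities $(i)$–$(iv)$ there — together with the fact that $\abs{s_i}_D^2$ is, up to the dimension constants $d, e$, a power of $\abs{\Nrd_{D_\bR/\bR}(s_i)}$ — yields $\abs{s_i}_D \leq (2ke)^{1/4}\abs{w_i}_D^{1/2}$, with $k=1$ for type III ($d=2$, $\bH$) and $k=d$ for type IV. The numerical constant $(2ke)^{1/4}$ should drop out of tracking the equality case $w_i = \eta_i$ (when $s_i = 1$) against the worst distortion allowed by \cref{Normbound}; this is the same bookkeeping as in \citep[Lemma 3.4]{Lattices} with $F$ there replaced by $F_0$ and an extra factor of $2$ in the type IV case to account for the degree-$2$ extension $F/F_0$.

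The main obstacle I anticipate is the constant-chasing in the last step: unlike types I and II, here $D_\bR$ is not $\bR$-split, so one cannot simply reduce to real diagonal matrices, and one must be careful that the diagonalising element $t_{ij}$ in the matrix case is a \emph{unitary} times a positive rescaling — only the rescaling part contributes to $\abs{s_i}_D$ beyond a norm-one factor, and verifying that the unitary part is genuinely norm-preserving for $\abs{\cdot}_D$ (which it is, since $\abs{u}_D^2 = \Trd(uu^*) = \Trd(I) = d$ for $u$ unitary on each $\rM_d(\bC)$-factor, contributing the $k=d$) requires care. A secondary subtlety is ensuring the sign matrix $(\varepsilon)$ assembled from the per-vector diagonalisations genuinely has the block-diagonal shape demanded by the definition of sign matrix — this is automatic because each $\varepsilon_{ji}$ sits in the $(j,i)$ block by construction, but one should remark that the resulting signature $(r_1,\dotsc,r_e)$ is then forced to equal the signature of $\psi$ as defined via \cref{signwd}, so no inconsistency arises.
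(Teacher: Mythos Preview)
Your construction of the $s_i$ in the first paragraph is correct and matches the paper: factor by factor, bring the skew-Hermitian element $\alpha(\psi(v_i,v_i))_j$ to a diagonal $\pm i$ matrix (type~IV) or to the quaternion $i$ (type~III) by a unitary conjugation followed by a positive diagonal rescaling.

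The gap is in your second paragraph. You propose to bound $\abs{s_i}_D$ via reduced norms and by ``inverting'' \cref{Normbound}, but that lemma only gives an \emph{upper} bound $\abs{\Nrd(a)} \leq C\abs{a}_D^{n}$. From $s_i^{-1}w_i(s_i^{-1})^\dag = \eta_i$ you correctly get $\abs{\Nrd(s_i)}^2 = \abs{\Nrd(w_i)}$, but to bound $\abs{s_i}_D$ from above you would need a \emph{lower} bound on $\abs{\Nrd(s_i)}$ in terms of $\abs{s_i}_D$, and this is simply false for general elements (e.g.\ $s=\diag(N,N^{-1})$ in $\rM_2(\bC)$ has $\Nrd(s)=1$ while $\abs{s}_D\to\infty$). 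So the reduced-norm route is a dead end. What the paper does instead is the direct computation you half-identify in your third paragraph: in type~IV, writing $s_{ij}=P_{ij}t_{ij}$ with $P_{ij}$ unitary and $t_{ij}$ positive diagonal, cyclicity of trace and $P_{ij}^*P_{ij}=I$ give $\Trd(s_{ij}s_{ij}^*)=\Trd(t_{ij}t_{ij}^*)$, so $\abs{s_i}_D^2 = 2\sum_{j,l} t_{ijl}^2$. Cauchy--Schwarz over the $de$ terms then yields $2\sum_{j,l} t_{ijl}^2 \leq \sqrt{4de\sum_{j,l} t_{ijl}^4}$, and recognising $2\sum_{j,l} t_{ijl}^4 = \Trd_{D_\bR/\bR}(A_iA_i^\dag)=\abs{\psi(v_i,v_i)}_D^2$ gives the bound with constant $(2de)^{1/4}$. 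Type~III is handled by an explicit quaternion formula for the conjugating element, followed by the same Cauchy--Schwarz step over the $e$ factors. (Incidentally, the unitary part is not norm-preserving for $\abs{\cdot}_D$ --- you compute $\abs{u}_D^2=d$, not $1$ --- what matters is that it drops out of $\Trd(ss^*)$ by cyclicity, not by being norm~$1$.)
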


\begin{proof}
Type III) For each~$i$, we have $\psi(v_i, v_i) \in D^- \setminus \{0\}$. 
Thus, we may write $\psi(v_i, v_i) = y_i$ for some $y_i$ such that
\[
\alpha(y_i) = (y_{i1}, \dotsc, y_{ie}) \in (i\mathbb{R} + j\mathbb{R} + k\mathbb{R})^e \setminus \{\mathbf{0}\},
\]
where each component $y_{ij}$ has the form
\[
y_{ij} = a i + b j + c k,
\]
for some real numbers $a, b, c \in \mathbb{R}$ (depending on $i$ and $j$, though we omit writing $a_{ij}, b_{ij}, c_{ij}$ for ease of notation), not all zero. Now let \[x_{ij}= (\sqrt{a^2+b^2+c^2}+a+cj-bk) \textnormal{ and } \alpha_{ij}= \norm{x_{ij}}^2 \sqrt{a^2+b^2+c^2}, \] 
then routine calculation shows that $x_{ij}y_{ij}x_{ij}^{\dag}=\alpha_{ij} \cdot i$.
Let $s_i = \alpha^{-1}(s_{i1}, \dotsc, s_{ie}) \in D_\bR^\times$, where $s_{ij}= \frac{\sqrt{{\alpha}_{ij}}\cdot x_{ij}^{\dag}}{\norm{x}_{ij}^2}$.

Hence,
\[ \psi(s_{ij}^{-1} v_i, s_i^{-1} v_i) = s_{ij}^{-1} \psi(v_i, v_i) (s_{ij}^{-1})^\dag = (\frac{x_{ij}}{\sqrt{{\alpha}_{ij}}}) \cdot y_{ij} \cdot (\frac{x_{ij}}{\sqrt{{\alpha}_{ij}}})^{\dag}  = \alpha_{ij}/\alpha_{ij} \cdot i=i. \]

Furthermore,
\begin{align*}
    \abs{s_i}_D^2
  & = \sum_{j=1}^e \Trd_{\mathbb{H}/\bR}(s_{ij} s_{ij}^t)
    = \sum_{j=1}^e  \Trd_{\mathbb{H}/\bR}\Bigg(\frac{\sqrt{{\alpha}_{ij}}\cdot x_j^{\dag}}{\norm{x_{ij}}^2} \cdot \frac{\sqrt{{\alpha}_{ij}}\cdot x_{ij}}{\norm{x_{ij}}^2} \Bigg)
\\& =\sum_{j=1}^e  \Trd_{\mathbb{H}/\bR}\Bigg( \frac{\alpha_{ij}}{\norm{x_{ij}}^2} \Bigg)=\sum_{j=1}^e 2 \sqrt{y_{ij}y_{ij}^{\dag}}
\\& \leq \sqrt{2e \sum_{j=1}^e \abs{y_{ij}}^2}
    = \sqrt{2e \Trd_{D_\bR/\bR}(y_{i}y_{i}^\dag)}
    = (2e)^{1/2} \abs{y_i}_D.
\end{align*}
As $\abs{a \alpha(\omega_i)}_{D_0}= \abs{a}_{D_0}$ , this implies that
\[\abs{s_i}_D^2 \leq (2e)^{1/2} \abs{y_i\alpha(\omega_i)}_D = (2e)^{1/2} \abs{\psi(v_i, v_i)}_D. \] 

For type~IV, for each index~$i$, define $A_i = \psi(v_i, v_i) \in D^- \setminus \{0\}.$
Our goal is to find an element $ s_i \in D_{\mathbb{R}}^* $ such that $s_i^{-1} A_i (s_i^{-1})^{\dag} = \alpha(\omega_i),$
for a suitable element $ \alpha(\omega_i) \in D^- $, diagonal with entries encoding the signature. To construct such an element $ s_i $, proceed as follows. Write
\[
A_i = (A_{i1}, \ldots, A_{ie}),
\]
where each $ A_{ij} \in \mathrm{M}_d(\mathbb{C}) $ is skew-Hermitian. Since skew-Hermitian matrices are diagonalisable via unitary conjugation, we may express each as
\[
A_{ij} = P_{ij} D_{ij} P_{ij}^{\dag},
\]
where $ D_{ij} $ is diagonal with purely imaginary entries, and $ P_{ij} \in \mathrm{U}_d(\mathbb{C}) $, so that $ P_{ij}^{-1} = P_{ij}^{\dag} $. For each $ 1 \leq l \leq d $, define
\[
t_{ijl} := \sqrt{|D_{ijl}|} \in \mathbb{R}, \qquad \varepsilon_{ijl} := \operatorname{sign}(D_{ijl}) \cdot i \in \{\pm i\}.
\]
Let $ t_{ij} $ be the diagonal matrix with diagonal entries $ t_{ijl} $ (Since $t_{ijl} \cdot \varepsilon_{ijl} \cdot \overline{t_{ijl}} = D_{ijl}$). We define $
s_{ij} := P_{ij} t_{ij}.$
We may then write
\[
A_{ij} = s_{ij} \varepsilon_{ij} s_{ij}^{\dag},
\]
where $ \varepsilon_{ij} \in \mathrm{M}_d(\mathbb{C}) $ is diagonal with entries $ \pm i $, encoding the signature.
From the diagonalisation,
\[
D_{ij} = P_{ij}^{\dag} s_{ij} \varepsilon_{ij} s_{ij}^t P_{ij} = t_{ij} \varepsilon_{ij} t_{ij}^t.
\] 
Finally, define $
s_i := \alpha^{-1}(s_{i1}, \ldots, s_{ie}) \in D_{\mathbb{R}}^*.$
This construction guarantees that
\[
s_i^{-1} A_i (s_i^{-1})^{\dag} = \alpha(\omega_i).
\]
Moreover, we observe that $
s_i s_i^{\dag} = P_i t_i t_i^{\dag} P_i^{\dag} = P_i t_i t_i^{\dag} P_i^{-1},$
so taking traces yields
$
\operatorname{Tr}(s_i s_i^{\dag}) = \operatorname{Tr}(t_i t_i^{\dag}).
$

Hence,
\begin{align*}
    \abs{s_i}_D^2
  & = \sum_{j=1}^e \Trd_{D_\mathbb{R}/\mathbb{R}}(s_{ij} {s_{ij}}^{t})
  =\Trd_{D_\mathbb{R}/\mathbb{R}}(t_{ij} t_{ij}^{t})
  = \sum_{j=1}^e \sum_{l=1}^d \Trd_{\mathbb{C}/ \mathbb{R}}(\abs{t_{ijl}}^2)
 \\& =\sum_{j=1}^e \sum_{l=1}^d 2t_{ijl}^2
     \leq \sqrt{4de \sum_{j=1}^e \sum_{l=1}^d t_{ijl}^4} =(2de)^{1/4} \sqrt{\Trd_{D_\mathbb{R}/\mathbb{R}}(A_{i} {A_{i}}^t})
 \\&    = (2de)^{1/4} \abs{\psi(v_i, v_i)}_D.
\end{align*}

\end{proof}

The following two lemmas are stated for types III and IV, adapted from \citep[Lemma 3.5]{Lattices}, which was originally formulated for types I and II and used in the proof of \citep[Lemma 5.1]{Lattices}. We will utilize these lemmas to establish \cref{conjclass1}.
\begin{lemma} \label{D0-basis1} \leavevmode
Let $D_0 = \mathbb{H}^e$ and let $t$ denote the involution of $D_0$ which is $\Trd(a)-a$ on each factor.
Let $V$ be a free left $D_0$-module and let $\psi_0$ be a non-degenerate $(D_0,t)$-skew-Hermitian form $V \times V \to D_0$. Let $\alpha^{-1} \colon ((\mathbb{H})^{e},t) \to (D_\bR,\dag)$ be an isomorphism of $\bR$-algebras with involution. 

Then there exists a $D_0$-basis $v_1, \dotsc, v_m$ for $V$ and a $\mathbb{R}$-basis $a_1, \dotsc, a_{4e}$ for $D_0$ with the following properties:
\begin{enumerate}[(i)]
\item $\{ v_1, \dotsc, v_m \}$ is $\alpha$-unitary with respect to $\psi_0$.
\item $\{ a_1, \dotsc, a_{4e} \}$ is an orthonormal basis for $D_0$ with respect to $\abs{\cdot}_D$.
\item $\{ a_r v_j : 1 \leqslant r \leqslant 4e, 1 \leqslant j \leqslant m \}$ is a symplectic $\mathbb{R}$-basis for $V$ with respect to $\Trd_{D_0/\mathbb{R}} \psi$.
\end{enumerate}
\end{lemma}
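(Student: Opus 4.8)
The plan is to build the basis $v_1,\dots,v_m$ by the same Gram--Schmidt procedure as in \cref{semi-orthogonal-normalise} and the proof of \cref{signwd}, but carried out over the real algebra $D_0 = \mathbb{H}^e$ and simultaneously normalised so that $\psi_0(v_i,v_i)$ is the distinguished element $(i,\dots,i)$. Concretely: using \cref{2ideal} (which supplies, for $\psi_0$ restricted to each factor $\mathbb{H}$, a vector with invertible self-pairing) together with the explicit normalisation computation already done in the type~III case of \cref{semi-orthogonal-normalise}, I first produce a single vector $w_1 \in V$ with $\alpha(\psi_0(w_1,w_1)) = (i,\dots,i)$. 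Then I split $V = D_0 w_1 \oplus w_1^{\perp}$ as a direct sum of left $D_0$-modules (exactly as in \cref{signwd}, using that $\psi_0(bw_1,w_1) = b\,\psi_0(w_1,w_1) \neq 0$ for $b \neq 0$, and that an $\mathbb{R}$-dimension divisible by $4e = \dim_{\mathbb{R}} D_0$ forces freeness over $\mathbb{H}^e$), and recurse on $w_1^{\perp}$ to obtain $v_2,\dots,v_m$. This gives property~(i).

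For property~(ii), I simply fix once and for all any orthonormal $\mathbb{R}$-basis $a_1,\dots,a_{4e}$ of $D_0$ with respect to $\abs{\cdot}_D$; existence is immediate since $\abs{\cdot}_D$ comes from a positive definite inner product. (If it is convenient one can take the $a_r$ adapted to the factor decomposition $\mathbb{H}^e = \bigoplus_j \mathbb{H}$, scaling the standard quaternion basis $1,i,j,k$ of each factor by $1/\sqrt{2}$, since $\Trd_{\mathbb{H}/\mathbb{R}}(1) = 2$ and $\Trd_{\mathbb{H}/\mathbb{R}}(i i^{\dag}) = 2$, etc.) This choice is independent of the basis $\{v_j\}$ and costs nothing.

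The content is property~(iii): that $\{a_r v_j\}$ is a symplectic $\mathbb{R}$-basis for the skew-symmetric form $\Phi = \Trd_{D_0/\mathbb{R}}\circ\psi_0$. Here one uses weak unitarity to reduce to a single block: for $j \neq j'$, $\Phi(a_r v_j, a_{r'} v_{j'}) = \Trd_{D_0/\mathbb{R}}(a_r \psi_0(v_j,v_{j'}) a_{r'}^{\dag}) = 0$ since $\psi_0(v_j,v_{j'}) = 0$, so the Gram matrix of $\Phi$ in the basis $\{a_r v_j\}$ is block-diagonal with one $4e \times 4e$ block per index $j$, the $j$-th block being $\bigl(\Trd_{D_0/\mathbb{R}}(a_r\, \zeta\, a_{r'}^{\dag})\bigr)_{r,r'}$ with $\zeta = \psi_0(v_j,v_j) = \alpha^{-1}(i,\dots,i)$. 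It remains to check that this single $4e \times 4e$ matrix is a symplectic form matrix, i.e. conjugate over $\mathbb{R}$ to the standard one; I expect this to be the main (though still routine) obstacle, and it is handled factor by factor: on each copy of $\mathbb{H}$ with $\zeta$-component $i$, the form $(a,b) \mapsto \Trd_{\mathbb{H}/\mathbb{R}}(a\, i\, \bar b)$ on the $4$-dimensional space $\mathbb{H}$ is a fixed nondegenerate skew-symmetric form (nondegeneracy because right-multiplication by $i$ is invertible and the trace pairing is nondegenerate), hence admits a symplectic basis; assembling these $e$ symplectic bases and expressing them in terms of the $a_r$ (or, more cleanly, choosing the $a_r$ from the outset to be such a symplectic basis on each factor, rescaled to be orthonormal --- one checks the two conditions are compatible since the standard symplectic basis of $\mathbb{H}$ for this form is already orthogonal of constant length) yields the claim. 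Finally, $\{a_r v_j\}$ has $4em = \dim_{\mathbb{R}} V$ elements and spans (as the $v_j$ are a $D_0$-basis and the $a_r$ span $D_0$ over $\mathbb{R}$), so it is a basis, completing the proof.
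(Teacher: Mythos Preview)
Your proposal is correct and follows essentially the same approach as the paper. The only organisational difference is that the paper first decomposes $V = \bigoplus_{i=1}^{e} u_i V$ along the factors of $D_0 = \mathbb{H}^e$ and runs Gram--Schmidt over each $\mathbb{H}$-factor separately, whereas you run Gram--Schmidt over $D_0$ globally and only invoke the factor decomposition at the end when verifying the symplectic condition; both routes use \cref{2ideal} to find a vector with nonzero (hence invertible) self-pairing, the type~III normalisation of \cref{semi-orthogonal-normalise} to force $\psi_0(v_j,v_j) = (i,\dots,i)$, and the scaled quaternion basis $\tfrac{1}{\sqrt{2}}\{1,i,j,k\}$ on each factor for parts~(ii) and~(iii).
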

\begin{proof}
    Write $B_0 = \mathbb{H}$. Let $u_1, \dotsc, u_{e}$ denote the standard $\mathbb{R}$-basis of $D_0 = \mathbb{H}^e$.

Let $V_i = u_i V$.
Then $V = \bigoplus_{i=1}^{e} V_i$ and each $V_i$ is a free left $B_0$-module.
Because $V$ is a free left $D_0$-module, $\textnormal{rk}_{B_0}(V_1) = \dotsb = \textnormal{rk}_{B_0}(V_{e})$. Let $m$ denote this rank.  Because $\psi : V \times V \to D_0$ is $\mathbb{H}$-bilinear, it takes the form \[ \psi((x_1, \dotsc, x_{e}), (y_1, \dotsc, y_{e})) = (\psi_1(x_1, y_1), \dotsc, \psi_e(x_{e}, y_{e})) \text{ for all } x_i, y_i \in V_i, \] where $\psi_i : V_i \times V_i \to B_0$ are some non-degenerate $(B_0, t)$-skew-Hermitian forms.

We prove by induction on $m$ that there is a unitary $B_0$-basis for $V_i$, $v_{i1}, \dotsc, v_{im}$ using the Gram-Schmidt method.
 First we \textbf{claim} that there exists $z \in V_i$ such that $\psi_i(z,z) \neq 0$. By \cref{2ideal}, we can choose $x,y \in V_i$ such that $\psi_i(x,y) =i$,
 \begin{align}\label{Ii}
     \psi_i(x+y,x+y) &= \psi_i(x,x) + \psi_i(x,y) + \psi_i(y,x) + \psi_i(y,y) \\ \nonumber
     &= \psi_i(x,x) + \psi_i(y,y) + \psi_i(x,y) - \psi_i(x,y)^{\dag} 
 \end{align}
Then $\psi_i(x,x)$, $\psi_i(y,y)$ and $\psi_i(x+y,x+y)$ are not all zero.
Choosing $z$ to be one of $x$, $y$ or $x+y$, we obtain $\psi_i(z,z) \neq 0$.
Then $\psi_i(z,z) \in B_0^- = (ai+bj+ck) \backslash \{0\} $, where $a,b,c \in \mathbb{R}$. So let $\psi_i(z,z) = y$ for some $y=ai+bj+ck$. Letting $v_{i1} = kz$, where $k=\frac{x}{\norm{x}\cdot \abs{y}^{1/4}}$ and $x=\sqrt{\abs{y}}+a+cj-bk$ we obtain that $\psi_i(v_{i1}, v_{i1}) =  i$ which proves our claim.

Then there exists $J$ s.t $\psi_i(Jz,Jz) = i$ . Let $v_{i1}=Jz$.

Let $V_i' = \{ v \in V_i : \psi_i(v_{i1}, v) = 0 \} = \{ v \in V_i : \psi_i(v, v_{i1}) = 0 \}$, which is a left $B_0$-submodule of $V_i$.
 For every $b \in B_0 \setminus \{0\}$, we have
\begin{equation} \label{eqnb11}
\psi_i(bv_{i1}, v_{i1}) = b\psi_i(v_{i1}, v_{i1}) \neq 0
 \end{equation}
and so $B_0v_{i1} \cap V_i' = \{ 0 \}$.
 For every $v \in V_i$, we have
\[ v - \psi_i(v, v_{i1}) \psi_i(v_{i1}, v_{i1})^{-1} v_{i1} \in V_i'. \]
Hence $V_i = B_0v_{i1} \oplus V_i'$ as a direct sum of left $B_0$-modules.

By \eqref{eqnb1}, $bv_{i1} \neq 0$ for all $b \in B_0 \setminus \{0\}$. Hence $\dim_\mathbb{R}(B_0v_{i1}) = 4$ and so $\dim_k(V_i') = 4(m-1)$.
Every $B_0$-module whose $\mathbb{R}$- dimension is a multiple of $4$ is a free $B_0$-module, so $B_0v_{i1}$ and $V_0'$ are free left $B_0= \mathbb{H}$-modules.
 By induction, there is a unitary $B_0$-basis $v_{i2}, \dotsc, v_{im}$ for $V_i'$.
 For part (ii) and (iii). Let $b_{1}= 1/\sqrt{2}$, $b_{2}= i/\sqrt{2}$, $b_{3}=k/\sqrt{2}$ and $b_{4}=j/\sqrt{2}$
 
Since $\psi_i$ is $(B_0,t)$-skew-Hermitian,
\[ \psi_i(b_rv_{ij}, b_{r'}v_{ij'}) = b_r\psi_i(v_{ij}, v_{ij'})b_{r'}^\dag  \]
Thus if $j \neq j'$, we obtain $\psi_i(b_rv_{ij}, b_{r'}v_{ij'}) = 0$.
If $j = j'$, then we can calculate for $k=1,2$
\[ \Trd_{B_0/\mathbb{R}} \psi_i(b_rv_{ij}, b_{r'}v_{ij'}) = 
\Trd_{\mathbb{H}/\mathbb{R}} b_r\psi_i(v_{ij}, v_{ij}){b_{r'}}^{\dag}   
= \begin{cases}
 1 &\text{if } (r,r') = (2k-1,2k),
 \\ -1 &\text{if } (r,r') = (2k,2k-1),
 \\ 0 &\text{otherwise}.
 \end{cases}
 \]
 Thus the bases $v_{i1}, \dotsc, v_{im}$ and $b_1, b_2, \cdots ,b_{4}$ satisfy (iii) for $(B_0, V_i, \psi_i)$.

\end{proof}



\begin{lemma} \label{D0-basis} \leavevmode
Let $D_0 = \rM_d(\mathbb{C})^e$ and let $t$ denote the involution of $D_0$ which is a transpose and complex conjugate on each factor.
Let $V$ be a free left $D_0$-module and let $\psi_0$ be a non-degenerate $(D_0,t)$-skew-Hermitian form $V \times V \to D_0$. Let $\alpha^{-1} \colon ((\rM_d(\mathbb{C}))^{e},t) \to (D_\bR,\dag)$ be an isomorphism of $\bR$-algebras with involution. Let $(\varepsilon)$ be a sign matrix for $\psi_0$.

Then there exists a $D_0$-basis $v_1, \dotsc, v_m$ for $V$ and a $\mathbb{R}$-basis $a_1, \dotsc, a_{2d^2e}$ for $D_0$ with the following properties:
\begin{enumerate}[(i)]
\item $\{ v_1, \dotsc, v_m \}$ is $\alpha(\varepsilon)$-unitary with respect to $\psi_0$ with sign matrix $(\varepsilon)$.
\item $\{ a_1, \dotsc, a_{2d^2e} \}$ is an orthonormal basis for $D_0$ with respect to $\abs{\cdot}_D$.
\item $\{ a_r v_j : 1 \leqslant r \leqslant 2d^2e, 1 \leqslant j \leqslant m \}$ is a symplectic (upto reordering) $\mathbb{R}$-basis for $V$ with respect to $\Trd_{D_0/\mathbb{R}} \psi$.
\end{enumerate}
\end{lemma}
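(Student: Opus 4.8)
The plan is to imitate the proof of \cref{D0-basis1} with $B_0=\rM_d(\bC)$ in place of $\bH$, the one genuinely new feature being that at each Gram--Schmidt step we must realise the \emph{prescribed} diagonal block of the given sign matrix $(\varepsilon)$ rather than a fixed element. (I write $\sqrt{-1}$ for the imaginary unit, to keep it distinct from the factor index $i$.) First I would reduce to one factor at a time exactly as in \cref{D0-basis1}: taking $u_1,\dots,u_e$ to be the standard central idempotents of $D_0$ and $V_i=u_iV$, one gets $V=\bigoplus_{i=1}^{e}V_i$ with each $V_i$ a free left $B_0$-module of rank $m$, and $\psi_0=(\psi_1,\dots,\psi_e)$ where each $\psi_i\colon V_i\times V_i\to B_0$ is a non-degenerate $(B_0,t)$-skew-Hermitian form. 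Assertion~(i) then reduces to: for every $i$ there is a weakly unitary $B_0$-basis $v_{i1},\dots,v_{im}$ of $V_i$ with $\psi_i(v_{ij},v_{ij})=\varepsilon_{ij}$ for all $j$; granting this, $v_j:=(v_{1j},\dots,v_{ej})$ is a weakly unitary $D_0$-basis of $V$ with $\psi_0(v_j,v_j)=(\varepsilon_{1j},\dots,\varepsilon_{ej})$, i.e.\ an $\alpha(\varepsilon)$-unitary basis.

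For the construction of the $v_{ij}$ I would induct on $m$ as in \cref{D0-basis1}, but strengthen the claim used to start the induction (the claim following \cref{signwd}, that some $z\in V_i$ has $\psi_i(z,z)\in\GL_d(\bC)$) to the following: for every integer $p$ with $0\le p\le r_i$ and $r_i-p\le d(m-1)$ there is $z\in V_i$ with $\psi_i(z,z)\in\GL_d(\bC)$ having exactly $p$ eigenvalues of positive imaginary part. To see this, pick a weakly unitary $B_0$-basis $w_1,\dots,w_m$ of $V_i$ (part~(1) of \cref{signwd}) and normalise it as in the type~IV case of \cref{semi-orthogonal-normalise} so that $\psi_i(w_l,w_l)=\sqrt{-1}\,E_l$ with each $E_l$ a real diagonal matrix with entries $\pm1$; by part~(2) of \cref{signwd} the number of $+1$'s among $E_1,\dots,E_m$ equals $r_i$. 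Then for $A_1,\dots,A_m\in\rM_d(\bC)$ the element $z=\sum_lA_lw_l$ satisfies $\psi_i(z,z)=\sqrt{-1}\sum_lA_lE_lA_l^\dag=\sqrt{-1}\,A\Lambda A^\dag$, where $A=[A_1,\dots,A_m]\colon\bC^{dm}\to\bC^d$ and $\Lambda=\diag(E_1,\dots,E_m)$ has signature $(r_i,dm-r_i)$; since the Hermitian matrices of the form $A\Lambda A^\dag$ are exactly those of signature $(p',q')$ with $p'\le r_i$, $q'\le dm-r_i$ and $p'+q'\le d$, one may choose $A$ so that $\psi_i(z,z)$ is invertible with exactly $p$ positive-imaginary eigenvalues. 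Now apply this with $p$ the number of diagonal entries $+\sqrt{-1}$ in $\varepsilon_{i1}$ (the conditions $p\le r_i$ and $r_i-p=\sum_{j\ge2}(\text{number of entries }+\sqrt{-1}\text{ in }\varepsilon_{ij})\le d(m-1)$ holding automatically), and replace $z$ by $Bz$ for a suitable $B\in\GL_d(\bC)$ so that the resulting vector $v_{i1}$ has $\psi_i(v_{i1},v_{i1})=\varepsilon_{i1}$ (possible because $B\psi_i(z,z)B^\dag=\varepsilon_{i1}$ is solvable, both sides being skew-Hermitian of rank $d$ with $p$ positive-imaginary eigenvalues). Exactly as in \cref{D0-basis1}, $V_i=B_0v_{i1}\oplus V_i'$ with $V_i'=\{v:\psi_i(v_{i1},v)=0\}$; since $\dim_\bR V_i'=2d^2(m-1)$ is a multiple of $2d^2$, $V_i'$ is a free left $B_0$-module of rank $m-1$, and $\psi_i$ restricts to a non-degenerate form on $V_i'$ of signature $r_i-p$. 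One then recurses with the blocks $\varepsilon_{i2},\dots,\varepsilon_{im}$, whose total number of $+\sqrt{-1}$'s is $r_i-p$; the inequality needed to apply the strengthened claim survives each step because the remaining blocks are at most $m-1$ in number and each contributes at most $d$.

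For parts~(ii) and~(iii) I would write down the $\bR$-basis explicitly. Let $E_{kl}\in\rM_d(\bC)$ denote the matrix units; for each factor $i\in\{1,\dots,e\}$, indices $k,l\in\{1,\dots,d\}$ and scalar $c\in\{1,\sqrt{-1}\}$, let $a$ be the element of $D_0$ equal to $\tfrac{c}{\sqrt2}E_{kl}$ in the $i$-th factor and $0$ in the others. These are $2d^2e=\dim_\bR D_0$ elements, and using $\Trd_{\rM_d(\bC)/\bR}(X)=2\Re\operatorname{tr}(X)$ one verifies at once that they form an orthonormal basis for $\abs{\cdot}_D$, which is~(ii). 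For~(iii): given two of these elements $a,b$, the value $\Trd_{D_0/\bR}\psi_0(av_j,bv_{j'})$ vanishes unless $j=j'$ (because $\psi_0(v_j,v_{j'})=0$) and $a,b$ are supported on the same factor $i$, in which case it equals $\Trd_{\rM_d(\bC)/\bR}(a^{(i)}\varepsilon_{ij}(b^{(i)})^\dag)$; a direct computation on the matrix units then shows that, within a single factor, $\tfrac1{\sqrt2}E_{kl}v_j$ is paired with $\tfrac{\sqrt{-1}}{\sqrt2}E_{kl}v_j$ with pairing equal to the $(l,l)$-entry of $\varepsilon_{ij}$ divided by $\sqrt{-1}$ (hence $\pm1$), while all other pairings vanish. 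Thus for each fixed $j$ the vectors $av_j$ fall into hyperbolic pairs, distinct $j$-blocks are mutually orthogonal for $\Trd_{D_0/\bR}\psi_0$, and after swapping the two members of every pair whose pairing is $-1$ and listing the $j$-blocks in any order we obtain a symplectic $\bR$-basis of $V$; this is~(iii) ``up to reordering''.

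The step I expect to be the real obstacle is the signature-controlled existence of $z$ in the second paragraph. In the type~III analogue \cref{D0-basis1} every diagonal value is taken equal to the single element $i$, so there is nothing to arrange; here, by contrast, one must split the fixed signature $r_i$ of $\psi_i$ into the amounts dictated by $(\varepsilon)$ across the $m$ blocks, and the content is precisely that the only obstruction is the elementary $*$-congruence inequality $p\le r_i$, $d-p\le dm-r_i$, which moreover persists through the induction. Verifying this congruence statement for skew-Hermitian forms over $\bC$, and checking that the single orthonormal basis $\{\tfrac1{\sqrt2}E_{kl},\tfrac{\sqrt{-1}}{\sqrt2}E_{kl}\}$ of $\rM_d(\bC)$ puts every pairing $\Trd_{\rM_d(\bC)/\bR}(a\varepsilon b^\dag)$ into symplectic normal form simultaneously for all diagonal $\varepsilon$ with entries $\pm\sqrt{-1}$, are the only inputs not already provided by \cref{D0-basis1}, \cref{signwd}, \cref{semi-orthogonal-normalise} and the claim following \cref{signwd}.
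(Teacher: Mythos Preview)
Your proposal is correct, and for parts~(ii) and~(iii) it coincides with the paper's argument: the paper also uses the matrix-unit basis $\tfrac{1}{\sqrt2}E_{kl},\ \tfrac{\sqrt{-1}}{\sqrt2}E_{kl}$ (its indexing ``entry at $(k,k)$ for $1\le k\le d^2$'' is a typo for exactly what you wrote) and makes the same reduced-trace computation.

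For part~(i) your route differs from the paper's. The paper first invokes \cref{signwd}(1) to produce \emph{some} weakly unitary basis, normalised so that each $\psi_i(v_{ij},v_{ij})$ is a diagonal $\pm\sqrt{-1}$ matrix, and then appeals (rather tersely) to \cref{signwd}(2): since the total number of $+\sqrt{-1}$'s in each factor is the signature $r_i$, the resulting block-diagonal matrix in $\rM_{dm}(\bC)=\rM_m(B_0)$ has the same Hermitian signature as the block-diagonal matrix built from the prescribed $(\varepsilon)$, hence is $*$-congruent to it by a single $P\in\gGL_{dm}(\bC)$, and changing basis by $P$ yields an $\alpha(\varepsilon)$-unitary basis. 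Your approach instead builds the prescribed sign matrix one block at a time during Gram--Schmidt, via the strengthened existence claim that $\psi_i(z,z)$ can be arranged to have any admissible signature $(p,d-p)$ with $p\le r_i$ and $d-p\le dm-r_i$; your argument for this via $A\Lambda A^\dag$ is valid, as is the check that the inequalities persist through the induction. The paper's approach is shorter because it applies Sylvester's law once at the end in $\rM_{dm}(\bC)$; your approach is more constructive and makes explicit the point the paper leaves implicit, namely that passing from ``some sign matrix with the right signature'' to ``the prescribed $(\varepsilon)$'' requires a congruence step.
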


\begin{proof}
We repeat the first part of the proof of \cref{WRem} to get weakly unitary basis, infact unitary (as we are considering non-degenerate skew Hermitian form) and second part of the proof of \cref{WRem} to get the desired $\alpha(\varepsilon)-$unitary basis.
 For part (ii) and (iii). Let $b_{2k-1}$ be $d$ by $d$ matrix with $1/\sqrt{2}$ entry at $(k,k)$ and $0$ everywhere else and $b_{2k}= ib_{2k-1}$ for $ 1 \leqslant k \leqslant d^2$.
 
Since $\psi_i$ is $(B_0,t)$-skew-Hermitian,
\[ \psi_i(b_rv_{ij}, b_{r'}v_{ij'}) = b_r\psi_i(v_{ij}, v_{ij'})b_{r'}^\dag  \]
Thus if $j \neq j'$, we obtain $\psi_i(b_rv_{ij}, b_{r'}v_{ij'}) = 0$.
If $j = j'$, then we can calculate
\[ \Trd_{B_0/\mathbb{R}} \psi_i(b_rv_{ij}, b_{r'}v_{ij'}) = 
\Trd_{\rM_d(\mathbb{C})/\mathbb{R}} b_r\psi_i(v_{ij}, v_{ij}){b_{r'}}^{\dag}   
= \begin{cases}
 \pm 1 &\text{if } (r,r') = (2k-1,2k),
 \\ \mp 1 &\text{if } (r,r') = (2k,2k-1),
 \\ 0 &\text{otherwise}.
 \end{cases}
 \]
 Thus the bases $v_{i1}, \dotsc, v_{im}$ and $b_1, b_2, \cdots ,b_{2d^2e}$ satisfy (iii) for $(B_0, V_i, \psi_i)$.
\end{proof}

\subsection{Orders and discriminants}\label{orders}

Let $D$ be a simple $\mathbb{Q}$–algebra. By \citep[Theorem~10.1]{Rei75}, every element
$a \in R$ satisfies $\Trd_{D/\mathbb{Q}}(a) \in \mathbb{Z}$ for any order $R \subset D$.  
Since $\Tr_{D/\mathbb{Q}} = d\,\Trd_{D/\mathbb{Q}}$, this immediately implies that the
discriminant of $R$ is always an integer multiple of $d^{2d^2e}$ when $D$ is of
type~IV (the same argument applies for type~III, replacing $d^{2d^2e}$ with
$d^{d^2e}$). Thus
\begin{equation}\label{eqn:disc-lower-bound}
    \abs{\disc(R)} \ge d^{2d^2e}.
\end{equation}

Assume now that $(D,\dag)$ is a simple $\mathbb{Q}$–algebra endowed with a positive
involution.  By \citep[Lemma~5.6]{DO21b}, for any order $R\subset D$, the quantity
$\abs{\disc(R)}$ coincides with the discriminant of the symmetric $\mathbb{Q}$–bilinear
form
\[
(a,b)\ \longmapsto\ \Tr_{D/k}(ab^\dag).
\]
Therefore $\abs{\disc(R)}$ can be expressed as $d^{2d^2e}$ times the discriminant of
the positive definite form inducing $\abs{\cdot}_D$. Consequently,
\begin{equation}\label{eqn:disc-covol}
    \abs{\disc(R)} = d^{2d^2e}\,\covol(R)^2 .
\end{equation}

For any order $R\subset D$, its trace–dual is
\[
R^* = \{\, a\in D : \Trd_{D/\mathbb{Q}}(ab)\in\mathbb{Z}\ \text{for all } b\in R \,\}.
\]

The next few lemmas generalise those of \citep[Section~2.F]{Lattices} from the full
centre $F$ to an arbitrary subfield $F_0\subset F$, with only superficial changes.

\begin{lemma}\label{index-RcapF}
Let $D$ be a semisimple $k$–algebra, $R$ an order in $D$, and $F$ its centre.
Let $F_0\subset F$ be a subfield and $\mathcal{O}$ an order in $F_0$ containing
$R\cap F_0$. Then
\[
[\mathcal{O}:R\cap F_0]^2\ \abs{\disc(\mathcal{O}R)}  \abs{\disc(R)} .
\]
\end{lemma}

\begin{lemma}\label{dual-ideal}
Let $D$ be a simple $\mathbb{Q}$–algebra with centre $F$, and let $F_0\subset F$ be a
subfield with maximal order $\mathcal{O}_{F_0}$.  If $S$ is an order in $D$ containing
$\mathcal{O}_{F_0}$, and $S^*$ is defined as above, then there exists an ideal
$I\subset\mathcal{O}_{F_0}$ such that $IS^*\subset S$ and
\[
\Nm(I)\ \le\ d^{-2d^2e}\,\abs{\disc(S)} .
\]
\end{lemma}

\begin{lemma}\label{conductor-S}
Let $D$ be a simple $\mathbb{Q}$–algebra with centre $F$, and
$F_0\subset F$ as above with maximal order $\mathcal{O}_{F_0}$.
Let $R$ be an order in $D$, and set $S=\mathcal{O}_{F_0}R$.
Let $\fc$ be the conductor of the order $R\cap F_0$ in $\mathcal{O}_{F_0}$.
Then
\[
\fc S \subset R \qquad\text{and}\qquad \fc R^* \subset S^* .
\]
\end{lemma}

\begin{lemma}\label{L1}
Let $F$ be the centre of $D$, let $F_0$ be its totally real subfield, and let
$\mathcal{O}_{F_0}$ denote the ring of integers of $F_0$.
If $\fc$ is the conductor of $R\cap F_0$ inside $\mathcal{O}_{F_0}$, then
\[
\Nm(\fc) \le [\mathcal{O}_{F_0}:R\cap F_0]^2 .
\]
\end{lemma}
\pagebreak
\begin{lemma}\label{MI}
  Suppose that $L$ is a Dedekind domain and that $S$ is a free $L$-module. Then every submodule of $S$ is a direct sum of ideals of $L$.
\end{lemma}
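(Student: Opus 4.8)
The plan is to argue by induction on the rank of $S$, the crucial input being that every ideal of a Dedekind domain is a projective $L$-module (it is either zero or an invertible fractional ideal), so that the short exact sequences produced by coordinate projections split.

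First I would reduce to the case that $S$ has finite rank, which is all that is needed in the sequel since there $S$ is always a lattice; so write $S \cong L^n$. If $n = 0$ there is nothing to prove. Assume $n \geq 1$ and let $M \subseteq S$ be an $L$-submodule. Decompose $S = L^{n-1} \oplus Le_n$ and let $\pi \colon S \to L$ be the projection onto the last coordinate. Then $\pi(M)$ is a submodule of $L$, i.e.\ an ideal $\mathfrak{a} \subseteq L$, while $\ker(\pi|_M) = M \cap L^{n-1}$, so we obtain a short exact sequence of $L$-modules
\[ 0 \to M \cap L^{n-1} \to M \xrightarrow{\pi} \mathfrak{a} \to 0. \]
Because $L$ is Dedekind, $\mathfrak{a}$ is projective over $L$, hence this sequence splits and $M \cong (M \cap L^{n-1}) \oplus \mathfrak{a}$. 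Since $M \cap L^{n-1}$ is a submodule of the free module $L^{n-1}$ of rank $n-1$, the induction hypothesis presents it as a direct sum of ideals of $L$, and therefore so is $M$.

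The only non-formal ingredient — and the only place the Dedekind hypothesis is used — is the projectivity of ideals of $L$; everything else is bookkeeping, including the degenerate cases $\pi(M) = 0$ (then $M \subseteq L^{n-1}$ and one applies the induction hypothesis directly) and $\mathfrak{a} = 0$ (which contributes no summand). I therefore do not expect any genuine obstacle: the statement is classical and could alternatively be cited directly from the structure theory of modules over Dedekind domains, e.g.\ \citep{Rei75}. For $S$ of infinite rank the same strategy works after well-ordering a basis and invoking a Zorn's-lemma argument, but this generality is not needed here.
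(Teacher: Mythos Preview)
Your argument is correct and is exactly the standard proof of this classical fact. The paper itself does not give a proof at all: it simply cites \citep[page~352]{AB74}, so there is no approach to compare beyond noting that your induction-on-rank argument, with the splitting coming from projectivity of ideals in a Dedekind domain, is precisely the proof one finds in such references.
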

\begin{proof}
See \citep[page 352]{AB74}
\end{proof}
\begin{lemma}\label{GCI}
     Let $K$ be a number field. Let $I$ be an ideal and $M$ be a submodule of $O_K$ of dim $n$ then
     \begin{align*}
         covol(IM)= \textnormal{Nm}(I)^n covol(M)
     \end{align*}
\end{lemma}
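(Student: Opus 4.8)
The plan is to pass from $M$ to the sublattice $IM$, use that covolume multiplies by the lattice index under a finite-index inclusion, and then compute that index locally, exploiting that $M$ is locally free over $O_K$.

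Since $O_K$ is a Dedekind domain and $M$ is a finitely generated torsion-free $O_K$-module of rank $n$, it is projective, hence locally free of rank $n$; equivalently, by \cref{MI} we may write $M = \mathfrak{a}_1 \oplus \dotsb \oplus \mathfrak{a}_n$ with each $\mathfrak{a}_i$ a nonzero fractional ideal of $O_K$, and each $\mathfrak{a}_i$ is locally principal. Because $I \neq 0$, the submodule $IM$ has finite index in $M$, and covolume is multiplicative under finite-index inclusions of lattices spanning the same real vector space, so
\[ \covol(IM) = [M : IM]\,\covol(M). \]
It therefore remains to prove $[M : IM] = \Nm(I)^n$.

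For this, note $M/IM \cong M \otimes_{O_K} (O_K/I)$, and by the Chinese Remainder Theorem $O_K/I \cong \prod_{\mathfrak{p} \mid I} O_K/\mathfrak{p}^{v_{\mathfrak{p}}(I)}$, whence $M/IM \cong \bigoplus_{\mathfrak{p}\mid I} M \otimes_{O_K} O_K/\mathfrak{p}^{v_{\mathfrak{p}}(I)}$. For each $\mathfrak{p}\mid I$ the ring $O_K/\mathfrak{p}^{v_{\mathfrak{p}}(I)}$ is supported only at $\mathfrak{p}$, so $M \otimes_{O_K} O_K/\mathfrak{p}^{v_{\mathfrak{p}}(I)} \cong M_{\mathfrak{p}} \otimes_{O_{K,\mathfrak{p}}} O_K/\mathfrak{p}^{v_{\mathfrak{p}}(I)} \cong (O_K/\mathfrak{p}^{v_{\mathfrak{p}}(I)})^n$, using that $M_{\mathfrak{p}}$ is free of rank $n$. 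Taking cardinalities, using $\#(O_K/\mathfrak{p}^{v}) = \Nm(\mathfrak{p}^{v})$ and multiplicativity of the norm,
\[ [M : IM] = \prod_{\mathfrak{p} \mid I} \Nm(\mathfrak{p}^{v_{\mathfrak{p}}(I)})^n = \Bigl(\prod_{\mathfrak{p} \mid I} \Nm(\mathfrak{p}^{v_{\mathfrak{p}}(I)})\Bigr)^{n} = \Nm(I)^n, \]
which gives the claim.

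The argument is essentially routine and I do not expect a genuine obstacle: the one point worth emphasising is that $\covol(IM) = [M:IM]\,\covol(M)$ is intrinsic to the inclusion $IM \subseteq M$, so no care is needed about how $M$ sits in Euclidean space, and in particular one should \emph{not} try to argue by splitting off the Steinitz decomposition coordinatewise (the ideals $\mathfrak{a}_i$ need not lie in orthogonal coordinate subspaces of the ambient lattice). If one wishes to allow $I$ to be a fractional ideal, write $I = c^{-1}J$ with $c \in \bZ_{>0}$ and $J$ integral and combine the integral case with $\covol(c^{-1}JM) = c^{-n[K:\bQ]}\covol(JM)$ and $\Nm(I) = c^{-[K:\bQ]}\Nm(J)$.
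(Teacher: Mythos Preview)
Your proof is correct and follows the same overall structure as the paper: reduce to showing $[M:IM]=\Nm(I)^n$ and then use $\covol(IM)=[M:IM]\covol(M)$. The only minor difference is in how the index is computed: the paper uses the Steinitz decomposition $M=\bigoplus_{i=1}^n M_i$ from \cref{MI} directly, writing $M/IM\cong\bigoplus_i M_i/IM_i$ and invoking the standard fact $\lvert M_i/IM_i\rvert=\Nm(I)$ for each fractional ideal $M_i$, whereas you compute the index by localising at each prime dividing $I$ and using that $M_{\mathfrak p}$ is free of rank~$n$. Both are entirely standard; your route has the small virtue of not needing the identity $\lvert\mathfrak a/I\mathfrak a\rvert=\Nm(I)$ as a black box, while the paper's is marginally shorter once that fact is granted.
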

\begin{proof}
    Using \cref{MI} $M= \bigoplus_{i=1}^{n} M_i$ where $M_i$ are ideals of $O_K$, then $IM  \subseteq M$ and $M/IM \cong \bigoplus_{i=1}^{n} M_i/IM_i$. Hence $\abs{M/IM}= \textnormal{Nm}(I)^n$. Using this,
    \begin{align*}
        covol(IM)=[M:IM]covol(M)= \textnormal{Nm}(I)^ncovol(M)
    \end{align*}
\end{proof}
\subsection{Anti-symmetric elements in division algebras of types III and IV}\label{antisydmo}

For a division $\mathbb{Q}$--algebra $(D,\dag)$ with involution, set
\[
D^-=\{\, a\in D : a^\dag = -a \,\}.
\]
If $\psi$ is a $(D,\dag)$–skew-Hermitian form on a left $D$–module $V$,
then $\psi(x,x)\in D^-$ for all $x$, so the structure of $D^-$ plays an essential role
in the analysis of weakly unitary bases.

Assume now that $(D,\dag)$ is of Albert type~III with positive involution.
Under the real isomorphism $D_{\mathbb{R}}\cong\mathbb{H}^e$,
the subspace $D_{\mathbb{R}}^-$ corresponds to the purely imaginary elements of
$\mathbb{H}^e$, hence forms a free $F_{0,\mathbb{R}}$–module of rank $3$.
Thus $D^-$ is $3$–dimensional over $F_0$.

If $D$ is of type~IV, identify $D_{\mathbb{R}}\cong M_d(\mathbb{C})^e$.
Then $D_{\mathbb{R}}^-$ consists of the tuples whose real components are
skew-symmetric matrices with zero diagonal and whose complex components are symmetric.
This gives a free $F_{0,\mathbb{R}}$–module of rank $d^2$, so $D^-$ has $F_0$–dimension
$d^2$.

The next lemma is the analogue of \citep[Lemma~2.12]{Lattices} for division algebras of
type III and IV.

\begin{lemma}\label{as1}
Let $D$ be of type III or IV and let $\dag$ denote either $a^\dag=\Trd(a)-a$ or the
conjugate-transpose involution.
Let $R$ be an order in $D$, and fix an integer $\eta>0$ such that
$\eta R^\dag \subset R$.  
Then there exists $\omega\in D$ satisfying:
\begin{enumerate}[(i)]
    \item $\omega$ is nonzero and belongs to $D^-$;
    \item $\omega R^* \subset R$ and $R^*\omega\subset R$;
    \item $\abs{\omega}_D \le k\,\abs{\disc(R)}^{2l/e}$,
\end{enumerate}
where:
\[
k=
\begin{cases}
2^{-20/3}\,\gamma_{3e}^{1/2}\,\eta^{5/3}, & \text{type III}, \\
4\,\gamma_{d^2e}^{1/2}\,\eta^3\,d^{-4d^2}, & \text{type IV},
\end{cases}
\qquad
l=
\begin{cases}
3,& \text{type III},\\
1,& \text{type IV}.
\end{cases}
\]
\end{lemma}

\begin{proof}

Let $S = \mathcal{O}_{F_0} R$ and $S^- = S \cap D^-$.
Let $I$ be the ideal of $\mathcal{O}_{F_0}$ given by \cref{dual-ideal} applied to $S$.
Let $J = \fc^2 I$ (as a product of ideals of $\mathcal{O}_{F_0}$).
Then by \cref{conductor-S},
\begin{gather*}
   JSR^* = \fc SI \fc R^* \subset \fc SIS^* \subset \fc SS \subset \fc S \subset R,
\\ R^*JS = \fc I \fc R^* S \subset \fc IS^*S \subset \fc SS \subset \fc S \subset R.
\end{gather*}
Then we have $JS \cap D^- = JS^-$, as 
 $JS^- \subseteq JS$ and $J \subset D^+$
and for any $ j \in J$ and $s \in S^-$,
\[(js)^{\dag}= s^{\dag}j^{\dag}=-sj=-js,\]
so $JS^- \subseteq D^-$ and

  $JS^- \subseteq JS \cap D^-.$
Now for any $\alpha \in JS \cap D^-$, $\alpha=js$ and $\alpha^{\dag}=-\alpha$,
\[(js)^{\dag}=s^{\dag}j^{\dag}=js^{\dag}=-js,\]
so $s^{\dag}=-s$, hence $JS \cap D^- \subseteq JS^-$.

Hence if we choose $\omega \in JS \cap D^- \setminus \{0\} = JS^- \setminus \{ 0 \}$, then it will satisfy (i) and~(ii).

Since $S^-$ is a non-zero $\mathcal{O}_{F_0}$-submodule of an $F_0$-vector space of dimension $r =3$ for type III and $r=d^2$ for type IV, we use \cref{GCI} to conclude that
\[ \textnormal{covol}(JS^-) = \Nm(J)^{r} \textnormal{covol}(S^-), \]
where we measure covolumes in $D_\mathbb{R}^-$ by the volume form associated with the restriction of the inner product $\Trd_{D_\mathbb{R}/\mathbb{R}}(ab^\dag)$.

Let $S^+ = \{ a \in S : a^\dag = a \}$.
Then $S^+ \cap S^- = \{0\}$.
Thus the sum $S^+ + S^-$ is direct.  This sum is also orthogonal because, if $a \in S^+$ and $b \in S^-$, then
\[ \Trd_{D/\mathbb{Q}}(ab^\dag) = \Trd_{D/\mathbb{Q}}((ab^\dag)^\dag) = \Trd_{D/\mathbb{Q}}(ba^\dag) = -\Trd_{D/\mathbb{Q}}(ab^\dag) \]
so $\Trd_{D/\mathbb{Q}}(ab^\dag) = 0$.

For every $a \in S$, we have $\eta a^\dag \in \eta (\mathcal{O}_{F_0} R)^\dag = \mathcal{O}_{F_0} \eta R^\dag \subset \mathcal{O}_{F_0} R = S$.
Hence
\[ 2\eta a = (\eta a+\eta a^\dag) + (\eta a-\eta a^\dag) \in S^+ + S^-. \]
Thus $2\eta S \subset S^+\oplus S^-$.
Focusing on type IV, as $\textnormal{rk}_\mathbb{Z}(S)= 2d^2e$,
\begin{equation} \label{eqn:covolS+S-}
\textnormal{covol}(S^+) \textnormal{covol}(S^-) = \textnormal{covol}(S^+ \oplus S^-) \leqslant \textnormal{covol}(2\eta S) = 2^{2d^2e} \eta^{2d^2e} \textnormal{covol}(S).
\end{equation}
Here we measure covolumes in both $D_\mathbb{R}^-$ and $S^+ \otimes_{\mathbb{Z}} \mathbb{R}$ by the volume forms associated with the restriction of the inner product $\Trd_{D_\mathbb{R}/\mathbb{R}}(ab^\dag)$.

For all $a,b \in S$, $\eta ab^\dag \in S$ and so $\Trd_{D/\mathbb{Q}}(ab^\dag) \in \eta^{-1}\mathbb{Z}$.
Consequently $\textnormal{covol}(S^+) \geq \eta^{-\textnormal{rk}_\mathbb{Z}(S^+)} = \eta^{-d^2e}$ so by \eqref{eqn:disc-covol} applied to $S$ and \eqref{eqn:covolS+S-},
\[ \textnormal{covol}(S^-) \leqslant \eta^{d^2e} \cdot 2^{2d^2e} \eta^{{2d^2e}} \textnormal{covol}(S) = 2^{2d^2e} \eta^{3d^2e} \cdot d^{-d^2e} \abs{\textnormal{disc}(S)}^{1/2}. \]
(For type III with $d=2, \textnormal{rk}_\mathbb{Z}(S)= 4e, \textnormal{rk}_\mathbb{Z}(S^+)= e, \textnormal{rk}_\mathbb{Z}(S^-)= 3e$, we get $\textnormal{covol}(S^-) \leqslant 2^{2e} \eta^{5e}\abs{\textnormal{disc}(S)}^{1/2}.$)

Therefore, using \cref{dual-ideal}, \cref{L1} and \cref{GCI},
\begin{align*}
    \textnormal{covol}(JS^-)
  & = \Nm(\fc)^{2d^2} \Nm(I)^{d^2} \textnormal{covol}(S^-)
\\& \leqslant [\mathcal{O}_{F_0} : R \cap F_0]^{4d^2} \cdot (d^{-2d^2e}\abs{\textnormal{disc}(S)})^{d^2} \cdot 2^{2d^2e} \eta^{3d^2e} d^{-d^2e}\abs{\textnormal{disc}(S)}^{1/2}
\\& = 2^{2d^2e} \eta^{3d^2e} d^{-d^2e(1+2d^2)} [\mathcal{O}_{F_0} : R \cap F_0]^{4d^2} \abs{\textnormal{disc}(S)}^{1/2+d^2}.
\end{align*}
(For type III, $\textnormal{covol}(JS^-) \leqslant 2^{-10e} \eta^{5e} [\mathcal{O}_{F_0} : R \cap F_0]^{12} \abs{\textnormal{disc}(S)}^{7/2}$ ).

\noindent Applying \eqref{eqn:disc-lower-bound} to $S$, we see that $\abs{\textnormal{disc}(S)} \geq d^{2d^2e},$ we get $\abs{\textnormal{disc}(S)}^{1/2-d^2} \leq (d^{2d^2e})^{1/2-d^2}=d^{-d^2e(-1+2d^2)} $.
Using \cref{index-RcapF}, we deduce that
\begin{align*}
    \textnormal{covol}(JS^-)
  & \leqslant 2^{2d^2e} \abs{\textnormal{disc}(S)}^{1/2-d^2} \eta^{3d^2e} d^{-d^2e(1+2d^2)} [\mathcal{O}_{F_0} : R \cap F_0]^{4d^2} \abs{\textnormal{disc}(S)}^{2d^2} \\
   & \leqslant 2^{2d^2e} \eta^{3d^2e} d^{-d^2e(-1+2d^2)} d^{-d^2e(1+2d^2)} \abs{\textnormal{disc}(R)}^{2d^2} \\
   & \leqslant 2^{2d^2e} \eta^{3d^2e} d^{-4d^4e} \abs{\textnormal{disc}(R)}^{2d^2} .
\end{align*}
(For type III, $\textnormal{covol}(JS^-) \leqslant 2^{-20e} \eta^{5e} \abs{\textnormal{disc}(R)}^{6} $.)

\noindent Since $JS^-$ is a free $\mathbb{Z}$-module of rank $d^2e$, there exists $\omega \in JS^- \setminus \{0\}$ with
\[ \abs{\omega}_D \leqslant \sqrt{\gamma_{d^2e}} \textnormal{covol}(JS^-)^{1/d^2e} \leqslant \sqrt{\gamma_{d^2e}} \cdot 2^{2} \eta^3 d^{-4d^2} \abs{\textnormal{disc}(R)}^{2/e}.
 \]
Similarly repeating the same argument for type III with $d=2, \textnormal{rk}_\mathbb{Z}(S)= 4e, \textnormal{rk}_\mathbb{Z}(S^+)= e, \textnormal{rk}_\mathbb{Z}(S^-)= 3e$, we get
\begin{align*}
    \textnormal{covol}(JS^-) \leqslant 2^{-20e} \eta^{5e} \abs{\textnormal{disc}(R)}^{6}.
\end{align*}
Since $JS^-$ is a free $\mathbb{Z}$-module of rank $3e$, there exists $\omega \in JS^- \setminus \{0\}$ with
\[ \abs{\omega}_D \leqslant \sqrt{\gamma_{3e}} \textnormal{covol}(JS^-)^{1/3e} \leqslant \sqrt{\gamma_{3e}} \cdot 2^{-20/3} \eta^{5/3} \abs{\textnormal{disc}(R)}^{6/e}.
\qedhere \]
\end{proof}

\begin{lemma}\label{Rs}
For any $a\in R$, the element $d\,\Nrd_{D/\mathbb{Q}}(a)^d\, a^{-1}$ belongs to $R^*$.
\end{lemma}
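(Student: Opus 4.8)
The plan is to reduce the statement to a standard property of reduced characteristic polynomials and the definition of the dual lattice $R^*$. First I would recall that for $a$ in a simple $\mathbb{Q}$-algebra $D$ of degree $d^2$ over its centre $F$, the reduced characteristic polynomial $\mathrm{Prd}_{a}(T) = T^d - \Trd_{D/F}(a)T^{d-1} + \dots + (-1)^d \Nrd_{D/F}(a)$ has coefficients that are polynomials in the reduced traces of powers of $a$; in particular, by Cayley--Hamilton for the reduced characteristic polynomial (see \citep[sec.~9]{Rei75}), $a$ satisfies $\mathrm{Prd}_a(a) = 0$. Rearranging, $(-1)^{d-1}\Nrd_{D/F}(a) = a\bigl(a^{d-1} - \Trd_{D/F}(a)a^{d-2} + \dots \bigr)$, so that $\Nrd_{D/F}(a)\,a^{-1}$ is a $\mathbb{Z}$-linear (indeed $R$-linear) combination of $1, a, \dots, a^{d-1}$ with coefficients that are the non-leading coefficients of $\mathrm{Prd}_a$. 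Since $a \in R$ and $R$ is an order, these powers lie in $R$; the point is to control the coefficients.

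Next I would observe that each coefficient of $\mathrm{Prd}_a(T)$, being (up to sign) an elementary symmetric function of the "eigenvalues" of $a$, is a polynomial with integer coefficients in $\Trd_{D/F}(a), \Trd_{D/F}(a^2), \dots$ via Newton's identities, with denominators dividing $d!$ — but more usefully, when $a \in R$ we have $\Trd_{D/F}(a^k) \in \mathcal{O}_F$ for all $k$ (as $R$ is an order, so $a^k \in R$ and reduced traces of integral elements are integral), hence each coefficient lies in $\frac{1}{d}\mathcal{O}_F$ at worst after clearing the Newton-identity denominators... Actually the cleaner route: I would use that $\Trd_{D/\mathbb{Q}} = \Trd_{F/\mathbb{Q}} \circ \Trd_{D/F}$ together with $\mathrm{Tr}_{D/\mathbb{Q}} = d\,\Trd_{D/\mathbb{Q}}$, and the fact (\citep[Theorem~10.1]{Rei75}, already invoked in Section~\ref{orders}) that $\Trd_{D/\mathbb{Q}}(x) \in \mathbb{Z}$ for all $x \in R$. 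So to show $d\,\Nrd_{D/\mathbb{Q}}(a)^d\, a^{-1} \in R^*$, by definition of $R^*$ it suffices to show that $\Trd_{D/\mathbb{Q}}\bigl(d\,\Nrd_{D/\mathbb{Q}}(a)^d\, a^{-1} b\bigr) \in \mathbb{Z}$ for every $b \in R$.

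Now I would write $\Nrd_{D/\mathbb{Q}}(a) = \Nm_{F/\mathbb{Q}}(\Nrd_{D/F}(a))$ and use the relation $\Nrd_{D/F}(a)^d = \Nrd_{D/F}(a)^{d}$ paired with Cayley--Hamilton as above to get $\Nrd_{D/F}(a)\,a^{-1} \in R$, hence $\Nrd_{D/F}(a)\,a^{-1}b \in R$ and $\Trd_{D/\mathbb{Q}}(\Nrd_{D/F}(a)\,a^{-1}b) \in \mathbb{Z}$; the remaining scalar factor $d\,\Nrd_{D/\mathbb{Q}}(a)^d / \Nrd_{D/F}(a)$ is an integer (it is $d$ times a product of Galois conjugates of $\Nrd_{D/F}(a)$, all of which are algebraic integers, and the whole thing is rational, hence in $\mathbb{Z}$), and it pulls out of the reduced trace since it is central. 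This gives integrality and hence membership in $R^*$. The main obstacle I anticipate is being careful about the exact power of $d$ and the exact exponent on $\Nrd$ needed to guarantee that $\Nrd_{D/F}(a)\,a^{-1} \in R$ rather than merely in $\frac{1}{d}R$ or in $R \otimes F$: the Cayley--Hamilton argument only directly gives $\Nrd_{D/F}(a)\,a^{-1}$ as an $\mathcal{O}_F$-combination of powers of $a$ if the coefficients of $\mathrm{Prd}_a$ are in $\mathcal{O}_F$, which requires the Newton-identity denominators (dividing $d!$, or more crudely a power of $d$) to be absorbed — this is precisely why the factor $d$ (and the $d$-th power on $\Nrd$) appears in the statement, and getting that bookkeeping exactly right is the delicate part of the argument.
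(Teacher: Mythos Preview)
Your argument has a genuine error in the final step. You claim that the scalar factor $d\,\Nrd_{D/\mathbb{Q}}(a)^d / \Nrd_{D/F}(a)$ is a rational integer and therefore pulls out of $\Trd_{D/\mathbb{Q}}$. But $\Nrd_{D/\mathbb{Q}}(a)^d \in \mathbb{Q}$ while $\Nrd_{D/F}(a) \in F$, so the quotient lies in $F$, not in $\mathbb{Q}$, unless $\Nrd_{D/F}(a)$ happens to be rational. Central elements of $F \setminus \mathbb{Q}$ do \emph{not} pull out of $\Trd_{D/\mathbb{Q}}$: the reduced trace down to $\mathbb{Q}$ is only $\mathbb{Q}$-linear, and for $\lambda \in F$ one has $\Trd_{D/\mathbb{Q}}(\lambda x) = \Tr_{F/\mathbb{Q}}(\lambda\,\Trd_{D/F}(x))$, which is not $\lambda\,\Trd_{D/\mathbb{Q}}(x)$. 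So the integrality conclusion does not follow as written. (Your worry about Newton-identity denominators, incidentally, is misplaced: the reduced characteristic polynomial of an integral element already has coefficients in $\mathcal{O}_F$; no factor of $d$ is needed there. The issue is purely the non-rationality of the leftover scalar.)

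The paper's proof bypasses all of this by never leaving $\mathbb{Q}$. It uses the \emph{non-reduced} regular representation $L_a \colon D \to D$, $b \mapsto ab$, computed with respect to a $\mathbb{Z}$-basis of $R$: then $L_a, L_b \in \rM_n(\mathbb{Z})$ for $a,b \in R$, and the adjugate formula gives $\det(L_a)\,L_a^{-1} \in \rM_n(\mathbb{Z})$. Since $\det(L_a) = \Nm_{D/\mathbb{Q}}(a) = \Nrd_{D/\mathbb{Q}}(a)^d$, one gets immediately that $\Tr_{D/\mathbb{Q}}\bigl(\Nrd_{D/\mathbb{Q}}(a)^d\,a^{-1}b\bigr) = \Tr\bigl(\det(L_a)L_a^{-1}L_b\bigr) \in \mathbb{Z}$ for all $b \in R$, and then the relation $\Tr_{D/\mathbb{Q}} = d\,\Trd_{D/\mathbb{Q}}$ supplies the factor of $d$ in the statement. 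This is where the $d$ and the exponent $d$ on $\Nrd$ actually come from --- they are artefacts of passing from the non-reduced trace/norm to the reduced ones --- rather than from any characteristic-polynomial bookkeeping over $F$.
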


\begin{proof} 
The left multiplication map $L_a:D\to D$, $b\mapsto ab$, has trace
$\Tr_{D/\mathbb{Q}}(a)=\Tr(L_a)$.
Since $a,b\in R$ imply $L_a,L_b\in M_n(\mathbb{Z})$, the matrix
$\det(L_a)L_a^{-1}$ also has integer entries.  
Thus $\Tr_{D/\mathbb{Q}}(\Nrd(a)a^{-1}b)\in\mathbb{Z}$ for all $b\in R$,
and since $\Tr_{D/\mathbb{Q}}=d\,\Trd_{D/\mathbb{Q}}$ the claim follows.
\end{proof}

\begin{lemma}\label{w1w2}
Let $D$ be of type IV.  
For $w_1,w_2\in R$ and $\omega$ as in \cref{as1}, define
\[
b = d^{2-8d^4e^2}\abs{\disc(R)}^{4d^2e}
        \Nrd(\psi(w_1,w_2))^{2d}\,
        \omega\,\psi(w_2,w_1)^{-1},
\qquad
a = b\,\psi(w_2,w_2)\,\psi(w_1,w_2)^{-1}.
\]
Then $a,b\in R$.  
The same statement holds for type III with
\[
b = 2^{2-32e^2}\abs{\disc(R)}^{8e}
        \Nrd(\psi(w_1,w_2))^{4}\,
        \omega\,\psi(w_2,w_1)^{-1}.
\]
\end{lemma}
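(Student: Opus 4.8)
The plan is to write both $b$ and $a$ in the shape $\omega\cdot(\text{an element of }R^{*})$ and then invoke $\omega R^{*}\subseteq R$ from \cref{as1}(ii). Put $n=\dim_{\bQ}D$ and let $\nu=n/d$ be the reduced degree of $D$ over $\bQ$, so that $\Nrd_{D/\bQ}(\lambda)=\lambda^{\nu}$ for $\lambda\in\bQ$; note $\nu$ is even ($\nu=2de$ for type~IV, $\nu=2e$ for type~III). By the discriminant relations of \cref{orders} (which rest on $\Tr_{D/\bQ}=d\,\Trd_{D/\bQ}$ and $d^{n}\mid\disc(R)$) one has $R\subseteq R^{*}$ and $m_{1}:=\abs{\disc(R)}/d^{n}$ is a positive integer with $m_{1}R^{*}\subseteq R$. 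Substituting $\abs{\disc(R)}=m_{1}d^{n}$ and $8d^{4}e^{2}=2n^{2}$ (resp.\ $32e^{2}=2n^{2}$ in type~III) into the scalar prefactor of $b$, and setting $N:=\Nrd_{D/\bQ}(\psi(w_{1},w_{2}))$ and $q:=d\,m_{1}^{n}N^{d}$, the prefactor collapses and one finds
\[ b=q^{2}\,\omega\,\psi(w_{2},w_{1})^{-1},\qquad a=q^{2}\,\omega\,\psi(w_{2},w_{1})^{-1}\,\psi(w_{2},w_{2})\,\psi(w_{1},w_{2})^{-1}. \]
Here $\psi(w_{1},w_{2})\neq 0$, so it is invertible in the division algebra $D$, and $\psi(w_{2},w_{1})=-\psi(w_{1},w_{2})^{\dag}$ has the same nonzero reduced norm $N$, since $\Nrd_{D/\bQ}$ is unaffected by $\dag$ and by sign as $\nu$ is even.

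Two elementary facts will drive the rest. First, $R^{*}$ is an $(R,R)$-bimodule: for $x\in R^{*}$, $r\in R$ and every $r'\in R$ one has $\Trd_{D/\bQ}(rx\,r')=\Trd_{D/\bQ}(x(r'r))\in\bZ$ and $\Trd_{D/\bQ}(xr\,r')=\Trd_{D/\bQ}(x(rr'))\in\bZ$, so $RR^{*}\subseteq R^{*}$ and $R^{*}R\subseteq R^{*}$. Second, we use the input $\psi(w_{1},w_{2}),\psi(w_{2},w_{1}),\psi(w_{2},w_{2})\in R^{*}$ (this holds whenever the $w_{i}$ are vectors of a lattice $L$ with $\Trd_{D/\bQ}\psi(L\times L)\subseteq\bZ$ and $R=\Stab_{D}(L)$, because then $\Trd_{D/\bQ}(r\psi(w_{i},w_{j}))=\Trd_{D/\bQ}(\psi(rw_{i},w_{j}))\in\bZ$ for all $r\in R$). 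Consequently $m_{1}\psi(w_{i},w_{j})\in R$, and since the reduced norm of an element of an order is a rational integer, $m_{1}^{\nu}N=\Nrd_{D/\bQ}(m_{1}\psi(w_{i},w_{j}))\in\bZ$, whence $q=d\,(m_{1}^{\nu}N)^{d}\in\bZ$.

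The core step is $q\,\psi(w_{2},w_{1})^{-1}\in R$ and $q\,\psi(w_{1},w_{2})^{-1}\in R$. Applying \cref{Rs} to $m_{1}\psi(w_{2},w_{1})\in R$ gives
\[ d\,\Nrd_{D/\bQ}(m_{1}\psi(w_{2},w_{1}))^{d}\,(m_{1}\psi(w_{2},w_{1}))^{-1}=d\,m_{1}^{n-1}N^{d}\,\psi(w_{2},w_{1})^{-1}\in R^{*}, \]
and multiplying by $m_{1}$ and using $m_{1}R^{*}\subseteq R$ yields $q\,\psi(w_{2},w_{1})^{-1}\in R$; the same argument applied to $m_{1}\psi(w_{1},w_{2})$ gives $q\,\psi(w_{1},w_{2})^{-1}\in R$. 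Now for $b$: since $q$ is central, $b=\omega\cdot q\cdot\bigl(q\,\psi(w_{2},w_{1})^{-1}\bigr)$, and $q\cdot\bigl(q\,\psi(w_{2},w_{1})^{-1}\bigr)\in\bZ\cdot R=R\subseteq R^{*}$, so $b\in\omega R^{*}\subseteq R$. For $a$: $a=\omega\cdot\bigl(q\,\psi(w_{2},w_{1})^{-1}\bigr)\cdot\psi(w_{2},w_{2})\cdot\bigl(q\,\psi(w_{1},w_{2})^{-1}\bigr)$, and the bracketed product lies in $R\cdot R^{*}\cdot R\subseteq R^{*}$ by the bimodule property (using $\psi(w_{2},w_{2})\in R^{*}$), so $a\in\omega R^{*}\subseteq R$. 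The type~III case is verbatim the same with $d=2$, $n=4e$, $\nu=2e$.

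The only genuinely delicate points are bookkeeping. One must use the sharp containment $m_{1}R^{*}\subseteq R$ with $m_{1}=\abs{\disc(R)}/d^{n}$ rather than the crude $\abs{\disc(R)}R^{*}\subseteq R$: the prefactor defining $b$ is smaller than what the crude bound would require by exactly the factor $d^{2n^{2}}$ that $\abs{\disc(R)}^{2n}$ contributes over $m_{1}^{2n}$, and this discrepancy is absorbed precisely because $d^{n}\mid\disc(R)$. And one must clear the two inverses $\psi(w_{2},w_{1})^{-1}$ and $\psi(w_{1},w_{2})^{-1}$ in an order that keeps every partial product inside $R$ or inside the bimodule $R^{*}$ — this is where the non-commutativity of types~III and~IV, which \cref{Rs} was introduced to handle, actually enters.
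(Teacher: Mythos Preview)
Your proof is correct and follows essentially the same approach as the paper: both set $x=m_1=\abs{\disc(R)}/d^{n}$, use $m_1 R^{*}\subset R$ to land $m_1\psi(w_i,w_j)$ in $R$, apply \cref{Rs} to clear the inverses, and invoke $\omega R^{*}\subset R$ from \cref{as1}(ii). Your packaging via the integer $q=d\,m_1^{n}N^{d}$ and the $(R,R)$-bimodule property of $R^{*}$ is a bit cleaner than the paper's, which for $a$ instead writes the product as $\bigl(qx^{-1}\omega\psi(w_2,w_1)^{-1}\bigr)\cdot\bigl(x\psi(w_2,w_2)\bigr)\cdot\bigl(q\psi(w_1,w_2)^{-1}\bigr)$ with all three factors already in $R$; the substance is identical.
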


\begin{proof}
    Since $\Trd_{D/\mathbb{Q}} \psi(L \times L) \subset \mathbb{Z}$, $\psi(L \times L) \subset R^*$, so $d^{-2d^2e} \abs{\disc(R)} \psi(w_1,w_2) \in R$. Let $x=d^{-2d^2e} \abs{\disc(R)}$ for type IV and $x=2^{-4e}\abs{\disc(R)}$ with $d=2$ for type III. Now applying previous lemma, 
    \begin{align*}
        xd (\Nrd_{D/\mathbb{Q}}(x \psi(w_1,w_2)))^d (x \psi(w_1,w_2))^{-1} \in R.
    \end{align*}
    So as $(\Nrd_{D/\mathbb{Q}}(x \psi(w_1,w_1)))^d = (x)^{2d^2e}(\Nrd_{D/\mathbb{Q}}\psi(w_1,w_2))^d$ for type IV whereas it's $= (x)^{4e}(\Nrd_{D/\mathbb{Q}}\psi(w_1,w_2))^2$ for type III,
    \begin{align*}
        dx^{2d^2e}(\Nrd_{D/\mathbb{Q}}\psi(w_2,w_1))^d\psi(w_2,w_1))^{-1} \in R
    \end{align*}
    for type IV and exponent of $x$ replaced by $4e$ with $d=2$ for type III.
    
    \noindent By using \cref{as1}(ii) and \cref{Rs}, $ \omega d (\Nrd_{D/\mathbb{Q}}(x \psi(w_2,w_1)))^d (x \psi(w_2,w_1))^{-1} \in R$, i.e
    \begin{align*}
        dx^{-1}x^{2d^2e}(\Nrd_{D/\mathbb{Q}}\psi(w_2,w_1))^d \omega (\psi(w_2,w_1))^{-1} \in R. 
    \end{align*}
    As $\Nrd_{D/\mathbb{Q}} (\psi(w_1,w_2)) =(\Nrd_{D/\mathbb{Q}} (\psi(w_2,w_1))$ and $x \psi(w_2,w_2) \in R$, we have $a,b \in R$.
\end{proof}

The following lemma is useful for calculating discriminants of skew-Hermitian forms. This lemma is adapted from {\citep[Subsection 3.C]{Lattices}} which still holds for types III and IV.

\begin{lemma} \label{trangular}
Let $(D, \dag)$ be a division $\mathbb{Q}$-algebra with a positive involution of type III or IV.
Let $V$ be a left $D$-vector space with a non-degenerate $(D,\dag)$-skew-Hermitian form $\psi : V \times V \to D$.
Let $R$ be an order in $D$.
Let $v_1, \dotsc, v_m$ be a $D$-basis for $V$.
Suppose that for all  $i = 1, \dotsc, m$, there is exactly one permutation $\sigma \in S_m$  for which $\psi(v_i, v_{\sigma(i)}) \neq 0$ . Let $k=2^{-4e}$ for type III and $k=d^{-2d^2e}$ for type IV.
Then
\[ \abs{\textnormal{disc}(Rv_1 + \dotsb + Rv_m, \Trd_{D/\mathbb{Q}} \psi)} = k \abs{\textnormal{disc}(R)}^m \prod_{i=1}^m \abs{\Nm_{D/\mathbb{Q}}(\psi(v_i, v_{\sigma(i)}))}. \]
\end{lemma}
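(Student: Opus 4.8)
The plan is to reduce the discriminant computation of the lattice $\Lambda = Rv_1 + \dotsb + Rv_m$ with respect to the trace form $\Trd_{D/\mathbb{Q}}\psi$ to a product of "diagonal block" contributions, one for each pair $(i,\sigma(i))$, and then to evaluate each block using the known discriminant of $R$ together with the reduced norm of $\psi(v_i,v_{\sigma(i)})$. First I would fix an $\mathbb{R}$-basis for $D_{\mathbb{R}}$ adapted to the norm $\abs{\cdot}_D$ (as produced in \cref{D0-basis1} and \cref{D0-basis}) so that the $\mathbb{Z}$-basis of $\Lambda$ splits into the $m$ blocks $\{a_r v_i\}_r$; with respect to this basis the Gram matrix of $\Trd_{D/\mathbb{Q}}\psi$ is, by the hypothesis that for each $i$ exactly one $\sigma(i)$ gives $\psi(v_i,v_{\sigma(i)})\neq 0$, a block matrix which (after reordering rows/columns by the permutation $\sigma$) is block-diagonal when $\sigma$ is an involution, or block-"anti-diagonal" in $2\times2$ blocks when $\sigma$ pairs $i\neq\sigma(i)$. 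In either case the determinant factors as $\pm\prod_i \det(\text{block}_i)$, where $\mathrm{block}_i$ is the Gram matrix of the pairing $\Trd_{D/\mathbb{Q}}\psi$ restricted to $\langle a_r v_i, a_s v_{\sigma(i)}\rangle$.

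The key step is then to identify each $\det(\mathrm{block}_i)$. Writing $c_i = \psi(v_i,v_{\sigma(i)}) \in D$, the block is the Gram matrix of the $\mathbb{Q}$-bilinear form $(a,b)\mapsto \Trd_{D/\mathbb{Q}}(a\, c_i\, b^\dag)$ on $R\times R$ (using that $\psi(av_i,bv_{\sigma(i)}) = a\,\psi(v_i,v_{\sigma(i)})\,b^\dag = a c_i b^\dag$). This form differs from the standard trace form $(a,b)\mapsto\Trd_{D/\mathbb{Q}}(ab^\dag)$ — whose discriminant on $R$ is $\abs{\disc(R)}$ up to the factor $k = d^{-2d^2e}$ (resp. $2^{-4e}$) recorded in Subsection~\ref{orders} — precisely by right multiplication by $c_i$ inside the trace. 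Thus $\mathrm{block}_i = G \cdot [\text{matrix of right mult.\ by } c_i]$ relative to the standard trace form, so $\det(\mathrm{block}_i) = \det(G)\cdot \det(r_{c_i})$ where $r_{c_i}\colon D\to D$ is right multiplication by $c_i$ as a $\mathbb{Q}$-linear map. Now $\det(r_{c_i}) = \Nm_{D/\mathbb{Q}}(c_i)$ by the standard fact that the determinant of the regular representation equals the (non-reduced) field/algebra norm, which for a simple $\mathbb{Q}$-algebra is $\Nrd_{D/\mathbb{Q}}(c_i)^{d}$; absolute values give $\abs{\Nm_{D/\mathbb{Q}}(c_i)}$. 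Multiplying over $i$, each factor of $\det(G)$ contributes $\abs{\disc(R)}$ up to the constant $k$, and one must check the constant $k$ appears only once overall and not once per block — this comes from how $\mathrm{covol}$ and $\disc$ interact with the $d^{2d^2e}$ factor in \eqref{eqn:disc-covol}, i.e. $\abs{\disc(\Lambda)} = d^{2d^2e}\,\mathrm{covol}(\Lambda)^2$ while $\mathrm{covol}(\Lambda)^2 = \mathrm{covol}(R)^{2m}\prod_i\abs{\Nm_{D/\mathbb{Q}}(c_i)}$ and $\mathrm{covol}(R)^2 = d^{-2d^2e}\abs{\disc(R)}$, so the powers of $d$ collapse to a single factor $k\abs{\disc(R)}^m$.

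I expect the main obstacle to be the careful bookkeeping of the sign/permutation structure of the Gram matrix when $\sigma$ has $2$-cycles (so that the "$\psi(v_i,v_i)$" block is genuinely zero and the pairing only couples $v_i$ with $v_{\sigma(i)}$), and — relatedly — making sure the reduced-norm exponent and the constant $k$ are tracked consistently between types III and IV. A clean way to handle the permutation issue is to observe that $\sigma$ must be an involution: since $\psi$ is skew-Hermitian, $\psi(v_i,v_j)\neq0 \iff \psi(v_j,v_i)\neq0$, so $\psi(v_i,v_{\sigma(i)})\neq0$ forces $\sigma(\sigma(i))=i$; then the $\mathbb{Z}$-Gram matrix decomposes into genuine diagonal blocks (for fixed points of $\sigma$) and $2\times2$ super-blocks $\begin{pmatrix}0 & B_i\\ -B_i^{\mathsf T} & 0\end{pmatrix}$ (for transpositions), each of determinant $\pm\det(B_i)^2 = \pm\abs{\Nm_{D/\mathbb{Q}}(\psi(v_i,v_{\sigma(i)}))}$ after taking absolute values, matching the claimed formula. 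Everything else is the routine determinant/covolume algebra sketched above, which I would not expand in detail.
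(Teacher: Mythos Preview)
Your block-decomposition strategy is exactly the right one, and it matches the argument in \cite[Subsection~3.C]{Lattices} to which the paper defers: write a $\bZ$-basis of $R$ as $a_1,\dots,a_n$, use $\{a_r v_i\}$ as a $\bZ$-basis of $\Lambda$, observe that the Gram matrix of $\Trd_{D/\bQ}\psi$ is block-structured according to~$\sigma$, and compute each block as the discriminant of $(a,b)\mapsto\Trd_{D/\bQ}(ab^\dag)$ on~$R$ multiplied by $\Nm_{D/\bQ}(c_i)$. Your observation that $\sigma$ must be an involution (from $\psi(v_i,v_j)\neq 0\iff\psi(v_j,v_i)\neq 0$) is also correct and is the clean way to handle the off-diagonal super-blocks.

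There is, however, a genuine error in your treatment of the constant~$k$. Your first instinct is right: each block contributes a factor $d^{-n}\abs{\disc(R)}\abs{\Nm_{D/\bQ}(c_i)}$ with $n=\dim_\bQ(D)$, so the total is $k^m\abs{\disc(R)}^m\prod_i\abs{\Nm_{D/\bQ}(c_i)}$, not $k\abs{\disc(R)}^m\prod_i\abs{\Nm_{D/\bQ}(c_i)}$. Your attempt to collapse $k^m$ to $k$ via covolumes is incorrect: the relation $\abs{\disc(\cdot)}=d^{2d^2e}\,\mathrm{covol}(\cdot)^2$ from \eqref{eqn:disc-covol} holds for orders in~$D$ because $\disc(R)$ is defined with the \emph{full} trace $\Tr_{D/\bQ}$, whereas $\disc(\Lambda,\Trd_{D/\bQ}\psi)$ already uses the \emph{reduced} trace, so no such conversion factor appears. (Even granting your formulae, the arithmetic gives $k^{m-1}$, not $k$.) Relatedly, in your final paragraph the equality $\det(B_i)^2=\abs{\Nm_{D/\bQ}(\psi(v_i,v_{\sigma(i)}))}$ drops the factor $k^2\abs{\disc(R)}^2$ that your own earlier computation of $\det(\mathrm{block}_i)$ supplies.

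In fact the statement as printed contains a typo: the constant should be $k^m$, not $k$. This is confirmed by the paper's own use of the lemma in the proof of \cref{pre-induction}, where applying it with $r$ basis vectors yields the factor $d^{-2d^2er}=k^r$. So your block computation is correct; it is the ``fix'' that is wrong.
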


The following lemma is adapted to types III and IV division algebra from {\citep[Lemma 3.9]{Lattices}}
\begin{lemma} \label{good-norm}
Let $(D,\dag)$ be a type III or IV algebra with positive involution.
Let $V$ be a left $D$-vector space of dimension $m$, equipped with a non-degenerate $(D,\dag)$-skew-Hermitian form $\psi : V \times V \to D$.
Then there exists a $D$-norm $\abs{\cdot}$ on $V_\mathbb{R}$ which is adapted to $\psi$.
\end{lemma}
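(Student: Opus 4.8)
The plan is to construct the $D$-norm on $V_\mathbb{R}$ explicitly by pulling back a standard norm through a weakly unitary basis, after extending scalars to $\mathbb{R}$. First I would pass to $D_\mathbb{R} \cong \mathbb{H}^e$ (type III) or $\rM_d(\mathbb{C})^e$ (type IV) and apply \cref{signwd} (for type IV) or \cref{D0-basis1} (for type III) to obtain a weakly unitary $D_\mathbb{R}$-basis $v_1, \dotsc, v_m$ of $V_\mathbb{R}$ with respect to $\psi$. Using \cref{semi-orthogonal-normalise}, I can rescale this to an $\alpha(\varepsilon)$-unitary basis, so that $\psi(v_i, v_i) = \alpha(\omega_i)$ with $\omega_i$ a fixed diagonal anti-Hermitian element (for type III, $\psi(v_i,v_i)=\alpha(i,\dotsc,i)$). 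Having a fixed normal form for the Gram data of $\psi$ is what lets the norm interact well with $\psi$.

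Next I would \emph{define} the inner product on $V_\mathbb{R}$. Write any $v \in V_\mathbb{R}$ as $v = \sum_i a_i v_i$ with $a_i \in D_\mathbb{R}$, and set
\[ \abs{v}^2 = \sum_{i=1}^m \abs{a_i}_D^2, \]
where $\abs{\cdot}_D$ is the norm on $D_\mathbb{R}$ from \cref{lm}. This is manifestly a positive definite quadratic form (it is a direct sum of $m$ copies of the positive definite form on $D_\mathbb{R}$ defining $\abs{\cdot}_D$). To check it is a $D$-norm, take $b \in D_\mathbb{R}$; then $bv = \sum_i (ba_i) v_i$, so by \cref{lm},
\[ \abs{bv}^2 = \sum_i \abs{ba_i}_D^2 \leq \sum_i \abs{b}_D^2 \abs{a_i}_D^2 = \abs{b}_D^2 \abs{v}^2, \]
which is exactly the defining inequality $\abs{bv} \leq \abs{b}_D \abs{v}$.

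The remaining point is \emph{adaptedness to $\psi$}: I need the chosen weakly unitary basis (or some orthonormal-type basis obtained from it) to be orthogonal for $\abs{\cdot}$ in the way the definition of ``adapted'' requires — by construction $v_1, \dotsc, v_m$ are orthogonal and, after the $\alpha(\varepsilon)$-unitary normalisation, have equal length, while $\psi$ is block-diagonal with the fixed blocks $\alpha(\omega_i)$; combining this with the orthonormal $\mathbb{R}$-bases $a_1, \dotsc$ of $D_\mathbb{R}$ from \cref{D0-basis1}(ii) / \cref{D0-basis}(ii) gives the required compatibility between $\abs{\cdot}$, the $D$-module structure, and $\Trd_{D/\mathbb{Q}}\psi$. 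I expect the main obstacle to be bookkeeping rather than conceptual: one must verify that the norm defined via a particular basis genuinely satisfies whatever precise ``adapted to $\psi$'' condition was fixed in \citep{Lattices} (typically that $\psi(v,w)=0 \Rightarrow \innerprod{v}{w}=0$ for basis vectors, together with a uniform comparison between $\abs{\psi(v_i,v_i)}_D$ and $\abs{v_i}$), and that this does not secretly depend on the choice of isomorphism $D_\mathbb{R} \cong \mathbb{H}^e$ or $\rM_d(\mathbb{C})^e$ — which it does not, since $\abs{\cdot}_D$ is intrinsic. Once the normal form from \cref{semi-orthogonal-normalise} is in hand, the verification is routine and parallels \citep[Lemma 3.9]{Lattices}.
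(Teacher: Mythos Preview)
The paper does not give its own proof of this lemma; it simply records it as the types III/IV adaptation of \citep[Lemma~3.9]{Lattices}. Your approach --- pass to $D_\mathbb{R}$, produce an $\alpha(\varepsilon)$-unitary basis via \cref{D0-basis1}/\cref{D0-basis} and \cref{semi-orthogonal-normalise}, and pull back the standard norm $\abs{v}^2=\sum_i\abs{a_i}_D^2$ --- is exactly the construction one expects from that reference, and your verification that this is a $D$-norm is correct.

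One point of bookkeeping you should not skip: from the way the lemma is \emph{used} (see \eqref{epsi1} in the proof of \cref{pre-induction}), ``adapted to $\psi$'' means $\abs{\psi(x,y)}_D\le\abs{x}\abs{y}$ for all $x,y\in V_\mathbb{R}$. With your norm and a weakly unitary basis one gets
\[
\abs{\psi(v,w)}_D=\Bigl\lvert\sum_i a_i\,\psi(v_i,v_i)\,b_i^\dag\Bigr\rvert_D\le\sum_i\abs{a_i}_D\,\abs{\psi(v_i,v_i)}_D\,\abs{b_i}_D,
\]
and after Cauchy--Schwarz this yields the desired inequality \emph{provided} $\abs{\psi(v_i,v_i)}_D\le 1$. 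For the $\alpha(\varepsilon)$-unitary normalisation one has $\psi(v_i,v_i)\psi(v_i,v_i)^\dag=1_{D_\mathbb{R}}$, so $\abs{\psi(v_i,v_i)}_D^2=\Trd_{D_\mathbb{R}/\mathbb{R}}(1)$, which is $2e$ (type~III) or $2de$ (type~IV), not~$1$. So you must either rescale the basis vectors by the fourth root of this constant, or (equivalently) absorb the constant into the definition of $\abs{\cdot}$; either fix is routine, but it should be stated rather than left under ``bookkeeping''.
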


\section{Cases of Zilber-Pink}\label{cases-of-ZP}

In this section, we present a proof of \cref{zp1}. The arguments are inspired by the strategy in \citep[sec.~5--8]{Lattices}, and we adopt notation from \citep[sec.~2.2, 2.4]{Orr18}. Our approach relies on the Pila-Zannier method, originally introduced to give an alternative proof of the Manin-Mumford conjecture \citep{PZ08}. 

This section generalise results from \citep[sections 5--8]{Lattices} to encompass PEL types III and IV.

\subsection{Shimura data}\label{Shimura-date}

Define
\[
J_2 = \begin{pmatrix}0 & 1 \\ -1 & 0\end{pmatrix}, \qquad
J_n = J_2^{\oplus\, n/2}, \quad \text{for even } n>0.
\]
For any ring $R$, the \emph{symplectic group} is
\[
\Sp_{2g}(R) = \{ T \in M_{2g}(R) : T J_{2g} T^\mathsf{t} = J_{2g} \}.
\]

The \emph{Siegel upper half-space} is
\[
\mathcal{H}_g = \{ Z \in M_g(\CC) : Z^\mathsf{t} = Z, \Im(Z) > 0 \},
\]
which can be identified with $\CC^{g(g+1)/2}$. The group $\Sp_{2g}(\RR)$ acts on $\mathcal{H}_g$ via
\[
\begin{pmatrix}A & B \\ C & D\end{pmatrix} \cdot Z = (AZ + B)(CZ + D)^{-1}.
\]

The quotient $\Sp_{2g}(\ZZ)\backslash \mathcal{H}_g$ corresponds to the moduli space $\mathcal{A}_g$, with the projection
\[
\pi : \mathcal{H}_g \longrightarrow \mathcal{A}_g.
\]

The \emph{Deligne torus} $\mathbb{S} = \mathrm{Res}_{\RR}^{\CC}(\mathbb{G}_m)$ satisfies
\[
\mathbb{S} = \left\{
\begin{pmatrix} a & b \\ -b & a \end{pmatrix} : a^2 + b^2 \neq 0
\right\}, \quad \mathbb{S}(\RR) \cong \CC^\times.
\]

\begin{definition}
A \emph{Shimura datum component} is a pair $(\mathcal{G}, X^+)$ with $\mathcal{G}/\QQ$ reductive and $X^+$ a connected component of a $\mathcal{G}(\RR)$-conjugacy class of homomorphisms $h: \mathbb{S} \to \mathcal{G}_\RR$ satisfying:
\begin{itemize}
    \item The action of $\mathbb{S}$ on $\Lie(\mathcal{G})_\CC$ has weights in $\{(0,0),(1,-1),(-1,1)\}$.
    \item $\mathrm{ad}(h(i))$ induces a Cartan involution on $\mathcal{G}^{\mathrm{ad}}_\RR$.
    \item No simple $\QQ$-factor of $\mathcal{G}^{\mathrm{ad}}$ becomes compact over $\RR$.
\end{itemize}
\end{definition}

If $\mathcal{H} \subset \mathcal{G}$ and $X_\mathcal{H} \subset X_\mathcal{G}$, then $(\mathcal{H}, X_\mathcal{H})$ is a \textbf{Shimura subdatum}. A \textbf{Shimura variety component} is $\Gamma\backslash X^+$, where $\Gamma \subset \mathcal{G}(\QQ)$ is a congruence subgroup stabilizing $X^+$.

\begin{definition}
A subvariety $V \subset \Gamma_\mathcal{G}\backslash X_\mathcal{G}^+$ is \textbf{special} if there exists a morphism of Shimura variety components
\[
f: \Gamma_\mathcal{H}\backslash X_\mathcal{H}^+ \to \Gamma_\mathcal{G}\backslash X_\mathcal{G}^+
\]
with $f(\Gamma_\mathcal{H}\backslash X_\mathcal{H}^+) = V$. A \textbf{special point} is a $0$-dimensional special subvariety.
\end{definition}

Let $L = \ZZ^{2g}$ and $V = L_\QQ$, equipped with the standard symplectic form $\phi$ defined by $J_{2g}$. Denote $\mathcal{G} = \mathcal{G}Sp(V, \phi)$ and $\Gamma = Sp_{2g}(\ZZ)$. Let $X^+$ be the $\mathcal{G}(\RR)^+$-conjugacy class of
\begin{equation} \label{eqn:h0}
h_0(a+ib) \mapsto \begin{pmatrix} a & b \\ -b & a \end{pmatrix}^{\oplus g}.
\end{equation}
Then $(\mathcal{G}, X^+)$ is a Shimura datum component and $X^+ \simeq \mathcal{H}_g$. The moduli space $\mathcal{A}_g$ is the corresponding Shimura variety.

For a PEL-type special subvariety $S \subset \mathcal{A}_g$, let $R$ be its generic endomorphism ring. Fix $x \in X^+$ with image $s \in S(\CC)$ generic. Let $\mathcal{H}$ be the centralizer of $R$ in $\mathcal{G}$. Then $\mathcal{H}^\circ$ is called the \emph{general Lefschetz group} of $S$.

Note that in the simple type IV case, $\mathcal{H} = \mathcal{H}^\circ$, whereas in the simple type III case, this equality does not necessarily hold; however, this distinction will not be relevant for our purposes.

We now state the following lemmas and theorems, which will be used in the proof of \cref{zp1}.

\begin{lemma}\label{conj-class-datum}\citep[Lemma 3.7]{UY13} 
Let $(\mathcal{G}, X)$ be a Shimura datum and $\mathcal{H} \subseteq \mathcal{G}$ a subgroup. Then there are only finitely many $X_\mathcal{H}$ such that $(\mathcal{H}, X_\mathcal{H})$ forms a Shimura subdatum.
\end{lemma}

\begin{definition}
The conjugacy class of a sub-Shimura datum $(\mathcal{H}_0, X_0^+)$ in $(\mathcal{G}, X^+)$ is
\[
\{ (g\mathcal{H}_0g^{-1}, g X_0^+) : g \in \mathcal{G}(\RR)^+, g\mathcal{H}_0g^{-1} \text{ defined over } \QQ \}.
\]
\end{definition}
Note by \cref{conj-class-datum}, there are only finitely many conjugacy classes for given $\mathcal{H}_0$. 

\begin{proposition}\label{codim-pel}
Let $S \subset \mathcal{A}_g$ be a proper PEL-type special subvariety. Then $\dim(S) \leq \dim(\mathcal{A}_g) - g + 1$.
\end{proposition}

\subsection{Conjugacy classes of simple PEL-III and IV type}\label{shimmer}

The Shimura subdatum components of $(\mathcal{G}, X^+)$ of simple PEL-III and IV type lie in only finitely many $\mathcal{G}(\bR)^+$-conjugacy classes. Let $d$, $e$, $m$ be positive integers such that $kem = 2g$, where $k=4$ for PEL-III type with unique signature $\textbf{s}$ and $k=2d^2$ for PEL-IV type and $\textbf{s}$ be a signature. (Note we don't have to worry about signature issues for PEL-III type case.)
For fixed $g$, there are only finitely many integers $d,e,m,\textbf{s}$ satisfying these conditions. As we shall show, each quadraple $d,e,m,\textbf{s}$ corresponds to a finitely many $\mathcal{G}(\bR)^+$-conjugacy class of Shimura subdatum components of simple PEL III/IV type.

For type III, Let $\mathbb{Q}(i,j,k)$ be the Hamilton quaternion algebra over $\mathbb{Q}$ and let $D_0 = (\mathbb{Q}(i,j,k))^e$.
Define a $\mathbb{Q}$-algebra homomorphism $\iota_0 \colon D_0 \to \rM_{2g}(\mathbb{Q})$ as follows:
\[ \iota_0( A_1, \cdots,A_e  )
= \bigoplus_{i=1}^{e} \big((
\tilde{A}_i)\big) \]
where $\tilde{A_i}$ is a image of homomorphism map $\varphi: \mathbb{Q}(i,j,k) \rightarrow \text{M}_{4}(\mathbb{Q})$ of $A_i$, we will map the quaternion units $1, i, j, k$ to $4 \times 4$ matrices.

\begin{align*}
\varphi(1) = \begin{pmatrix}
1 & 0 & 0 & 0 \\
0 & 1 & 0 & 0 \\
0 & 0 & 1 & 0 \\
0 & 0 & 0 & 1
\end{pmatrix},
&\varphi(i) = \begin{pmatrix}
0 & 1 & 0 & 0 \\
-1 & 0 & 0 & 0 \\
0 & 0 & 0 & 1 \\
0 & 0 & -1 & 0
\end{pmatrix},
\varphi(j) = \begin{pmatrix}
0 & 0 & 1 & 0 \\
0 & 0 & 0 & -1 \\
-1 & 0 & 0 & 0 \\
0 & 1 & 0 & 0
\end{pmatrix},
\\& \varphi(k) = \begin{pmatrix}
0 & 0 & 0 & 1 \\
0 & 0 & 1 & 0 \\
0 & -1 & 0 & 0 \\
-1 & 0 & 0 & 0
\end{pmatrix}.   
\end{align*}

We define
\[
\varphi(A_i) = a \varphi(1) + b \varphi(i) + c \varphi(j) + d \varphi(k)
\]

For type IV, let $D_0 = \rM_d(\mathbb{Q}(i))^e$.
Define a $\mathbb{Q}$-algebra homomorphism $\iota_0 \colon D_0 \to \rM_{2g}(\mathbb{Q})$ as follows:
\[ \iota_0( A_1, \cdots,A_e  )
= \bigoplus_{i=1}^{e} \Bigg(\begin{pmatrix}
\tilde{A}_i
\end{pmatrix} \oplus \cdots \oplus \begin{pmatrix}
\tilde{\bar{A}}_i
\end{pmatrix} \oplus \begin{pmatrix}
\tilde{A}_i
\end{pmatrix} \oplus \cdots \oplus \begin{pmatrix}
\tilde{\bar{A}}_i
\end{pmatrix}  \Bigg) \]
where copy of $\begin{pmatrix}
\tilde{A}_i
\end{pmatrix}$ occurs $r_i$ times and $\begin{pmatrix}
\tilde{\bar{A}}_i
\end{pmatrix}$ occurs $md-r_i$ times, where $r_i$ is a signature, where $\tilde{X}$ just outputs for each entry $x_{ij}+iy_{ij}$ of matrix $X$, $\begin{pmatrix}
x_{ij} & -y_{ij} \\
y_{ij}& x_{ij}
\end{pmatrix} $.

\textbf{Note:} The rest of the section works with PEL-III type with only slight adjustment in parameter so we only focus on PEL-IV type. As mentioned before we don't have to worry about signature issues for PEL-III type

We view $V$ as a left $D_0$-module via $\iota_0$. Let $t$ denote the involution of $D_0$ which is complex conjugate-transpose on each factor.
Since $2dem$ is even, routine calculation shows that $\iota_0(D_0)$ commutes with $J_{2g}$ and so, for all $a \in D_0$ and $x, y \in V$, we have
\[ \phi(ax, y) = x^t \iota(a)^t J_{2g} y = x^t J_{2g} \iota(a)^t y = \phi(x, a^t y). \]
Thus $\phi \colon V \times V \to \mathbb{Q}$ is a $(D_0,t)$-compatible symplectic form.
By \citep[Corollary 3.3]{Lattices}, there is a unique non-degenerate $(D_0,t)$-skew-Hermitian form $\psi_0 \colon V \times V \to D_0$ such that $\phi = \Trd_{D_0/\mathbb{Q}} \psi_0$. (Routine calculation shows that $\phi = \Trd_{D_0/\mathbb{Q}} \psi_0$ for PEL-III type as well). 

Let $\mathcal{H}_0$ denote the centraliser of $\iota_0(D_0)$ in $\mathcal{G}$.

\begin{lemma}\label{conjclass1}
Let $(\mathcal{H}, X_\mathcal{H}^+) \subset (\mathcal{G}Sp_{2g}, \mathcal{H}_g)$ be a Shimura subdatum component of PEL-III/IV type. Let $D$ be its generic endomorphism algebra with center $F$. Then $\mathcal{H}_\RR$ is $\mathcal{G}(\RR)^+$-conjugate to the group $\mathcal{H}_0$ defined as the centralizer of $\iota_0(D_0)$, for
\[
d = \sqrt{\dim_F(D)}, \quad ke = [F:\QQ], \quad m = 2g / (kd^2 e),
\]
where $d = 2$, $k=1$ for type III and $k=2$ for type IV.
\end{lemma}

The same proof of \citep[Lemma 5.1]{Lattices} works for above lemma, with the fixed signature $\textbf{s}$ for the PEL-IV type.


We now use the representation that comes from \citep[Sec. 6]{Lattices} as it was already made generic to be able to apply for PEL-IV type (also PEL-III type) case . In our application to \cref{zp1}, $\mathcal{H}_0$ shall be equal to the group $\mathcal{H}_0$ defined be the group defined just before \cref{conjclass1}. Define $D_0$ and $\iota_0 \colon D_0 \to \rM_n(\mathbb{Q})$ as in section~\ref{shimmer} (with $n=2g$).
Let $E_0$ be the centraliser of $\iota_0(D_0)$ in $\rM_n(\mathbb{Q})$, so we have 
\begin{equation} \label{eqn:H0}
\mathcal{H}_0 = \mathcal{G}Sp \cap E_0.
\end{equation}

We can do the similar calculation as in the proof of \citep[Prop 6.1]{Lattices} after extending scalars to $\mathbb{C}$, and routine calculation shows that we have desired condition:  $E_{0,\mathbb{C}} \cong \rM_{md}(\mathbb{C})^{2e}$ and the resulting $E_{0,\mathbb{C}}$-module structure on $V_\mathbb{C}$ is isomorphic to the direct sum of $d$ copies of each of the $e$ irreducible representations of $E_{0,\mathbb{C}}$.

\citep[Prop~7.1]{Lattices} holds for $\mathcal{H}_0$ defined in \cref{eqn:H0}. The argument generalises \citep[section~5.5]{DO21b} and  \cref{height-bound} plays the role of \citep[Lemma~5.7]{DO21b} 

By \citep[Prop 6.1]{Lattices} and generalised \citep[Prop~7.1]{Lattices} gives us the following theorem:
\begin{theorem}\label{IMP}
 Let $d$, $e$ and $m$ be positive integers.
Let $n = kem$, where $k=4$ for PEL-III type and $k=2d^2$ for PEL-IV type.
Let $L = \bZ^n$ and let $\phi \colon L \times L \to \bZ$ be the standard symplectic form as in section~\ref{Shimura-date}.
Let $\gG = \mathcal{G}Sp(L_\mathbb{Q}, \phi) = \mathcal{G}Sp_{n,\mathbb{Q}}$ and let $\Gamma = \gSp_n(\bZ)$.
Let $\mathcal{H}_0$ be the subgroup of $\gG$ defined in~\eqref{eqn:H0}.
Then there exists a finitely generated, free $\ZZ$--module $\Lambda$, a representation $\rho_L:\gG\to\gGL(\Lambda_\QQ)$ such that $\Lambda$ is stabilised by $\rho_L(\Gamma)$, a vector $w_0\in\Lambda$ and positive constants $\newC{rm1}$, $\newC{re1}$ such that:
\begin{enumerate}[(i)]
\item $\Stab_{\gG,\rho_L}(w_0) = \mathcal{H}_0$;

\item the orbit $\rho_L(\gG(\bR))w_0$ is closed in $\Lambda_\bR$;

\item for each $u \in \gG(\bR)$, if the group $\mathcal{H}_u = u \mathcal{H}_{0,\bR} u^{-1}$ is defined over~$\mathbb{Q}$ and $L_\mathbb{Q}$ is irreducible as a representation of $\mathcal{H}_u$ over $\mathbb{Q}$, then there exists $w_u \in \Aut_{\rho_L(\gG)}(\Lambda_\bR) w_0$ such that $\rho_L(u) w_u \in \Lambda$ and
\[ \abs{w_u} \leq \refC{rm1} \abs{\disc(R_u)}^{\refC{re1}}, \]
where $R_u$ denotes the ring $\End_{\mathcal{H}_u}(L) \subset \rM_{2g}(\bZ)$.
\end{enumerate}   
\end{theorem}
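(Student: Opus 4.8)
The plan is to obtain the representation together with properties (i)--(ii) directly from \citep[Proposition~6.1]{Lattices}, and property (iii) from a type~III/IV generalisation of \citep[Proposition~7.1]{Lattices}. For the first part I would simply invoke \citep[Proposition~6.1]{Lattices} for the group $\gH_0 = \gGSp \cap E_0$ of \eqref{eqn:H0}: the construction there of $\Lambda$, of $\rho_L \colon \gG \to \gGL(\Lambda_\QQ)$ and of $w_0 \in \Lambda$ depends on the Albert type only through the $\bC$-algebra structure of $E_{0,\bC}$ and the $E_{0,\bC}$-module structure of $V_\bC$, and both of these were pinned down in \cref{rb1} (namely $E_{0,\bC} \cong \rM_{md}(\bC)^{2e}$, with $V_\bC$ the stated sum of copies of its irreducible representations; likewise for type~III with the parameters of \cref{conjclass}). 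This yields $\Stab_{\gG,\rho_L}(w_0) = \gH_0$ and closedness of $\rho_L(\gG(\bR))w_0$ in $\Lambda_\bR$, exactly as in that proposition.

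For (iii) the plan is to run the argument of \citep[Proposition~7.1]{Lattices} (which generalises \citep{DO21b}, Section~5.5), substituting our \cref{height-bound} for the type~I/II height bound \citep[Lemma~5.7]{DO21b}. First I would fix $u \in \gG(\bR)$ with $\gH_u = u\gH_{0,\bR}u^{-1}$ defined over $\QQ$ and $L_\QQ$ irreducible as a $\gH_u$-module; by Schur's lemma $D_u := \End_{\gH_u}(L_\QQ)$ is then a division $\QQ$-algebra, and since $\gH_u$ is $\gG(\bR)$-conjugate to $\gH_0$ its real form $D_{u,\bR}$, with the Rosati involution $\dag$ attached to $\phi$, is of Albert type~III or~IV, with $R_u = \End_{\gH_u}(L) = \Stab_{D_u}(L)$ an order in $D_u$. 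Next, by \cref{tr-skew-hermitian-form} there is a unique non-degenerate $(D_u,\dag)$-skew-Hermitian form $\psi$ on $L_\QQ$ with $\Trd_{D_u/\QQ}\psi = \phi$; as $\phi$ is unimodular we are in the hypotheses of \cref{height-bound} with $\abs{\disc(L)}=1$, and it produces a $D_u$-basis $v_1, \dotsc, v_m$ of $L_\QQ$ lying in $L$, weakly unitary for $\psi$, with $[L : R_uv_1 + \dotsb + R_uv_m]$ and each $\abs{\psi(v_i,v_j)}_{D_u}$ bounded by a power of $\abs{\disc(R_u)}$. Then I would rescale via \cref{semi-orthogonal-normalise} to obtain $s_i \in D_{u,\bR}^\times$ with $\abs{s_i}_{D_u} \leq (2ke)^{1/4}\abs{\psi(v_i,v_i)}_{D_u}^{1/2}$ making $s_1^{-1}v_1, \dotsc, s_m^{-1}v_m$ an $\alpha(\varepsilon)$-unitary $D_{u,\bR}$-basis of $L_\bR$, and combine it with an orthonormal $\bR$-basis of $D_{u,\bR}$ as in \cref{D0-basis1} (type~III) or \cref{D0-basis} (type~IV) to get a symplectic $\bR$-basis of $L_\bR$ differing from the standard one by an element of size controlled by $\abs{\disc(R_u)}$. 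Exactly as in \citep[Proposition~7.1]{Lattices} this element determines the required $\theta \in \Aut_{\rho_L(\gG)}(\Lambda_\bR)$; setting $w_u = \theta w_0$ then gives $\rho_L(u)w_u \in \Lambda$ together with $\abs{w_u} \leq \refC{rep-multiplier}\abs{\disc(R_u)}^{\refC{rep-exponent}}$.

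The hard part will be this last step --- converting the basis of \cref{height-bound} into the bound on $\abs{w_u}$ --- because of two features absent from the type~I/II setting. First, $D_u$ is highly non-commutative, so there is no canonical nonzero element of $D_u^-$ and hence no canonical unitary normalisation: one must carry the sign matrix $(\varepsilon)$, and the work of producing rescaling elements $s_i$ of the correct norm (which is what keeps the discriminant bound alive through the rescaling) is what \cref{semi-orthogonal-normalise} does, relying on the non-commutative lemmas \cref{Rs} and \cref{w1w2} in place of \citep[Lemma~2.11]{Lattices}. Second, in type~IV the signature is an extra discrete invariant that must match that of $\gH_0$; I would handle this by fixing $d,e,m,\textbf{s}$ and arguing one $\gG(\bR)$-conjugacy class at a time, invoking \cref{conjclass1} to ensure that for each $g$ there are only finitely many such classes, so the constants $\refC{rep-multiplier}$ and $\refC{rep-exponent}$ can be chosen uniformly. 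Once these two points are settled, the remainder is a direct transcription of the proof of \citep[Proposition~7.1]{Lattices}.
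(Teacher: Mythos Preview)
Your proposal is correct and follows essentially the same route as the paper: invoke \citep[Proposition~6.1]{Lattices} for $\Lambda$, $\rho_L$, $w_0$ and properties (i)--(ii), then establish (iii) by running the argument of \citep[Proposition~7.1]{Lattices} with \cref{height-bound} replacing \citep[Lemma~5.7]{DO21b}, using \cref{semi-orthogonal-normalise} and \cref{D0-basis1}/\cref{D0-basis} to pass from the weakly unitary basis to an $\alpha(\varepsilon)$-unitary symplectic basis. One small correction: \cref{Rs} and \cref{w1w2} are not ingredients of \cref{semi-orthogonal-normalise} but rather of \cref{height-bound} itself (they enter in the proof of \cref{pre-induction}); this does not affect your argument, since by the time you reach the rescaling step the output of \cref{height-bound} is already in hand.
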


\subsection{Proof of Theorem \ref{zp1}}\label{FP1}

In fact, instead of proving \cref{zp1}, we will prove the following, more general theorem. (Recall that, by \cref{codim-pel}, for $g\geq 3$, all proper special subvarieties of PEL type of $\cA_g$ have codimension at least~$2$.)

\begin{theorem}\label{ZP-end} 
Let $g\geq 3$ and let $C$ be an irreducible algebraic curve in $\cA_g$. Let $S$ denote the smallest special subvariety of $\cA_g$ containing $C$.
Let $\Omega$ denote the set of special subvarieties of $\cA_g$ of simple PEL type III or IV of dimension at most $\dim(S)-2$. Let $\Sigma$ denote the set of points in $\cA_g(\CC)$ which are endormorphism generic in some $Z\in\Omega$.
If $C$ satisfies Conjecture \ref{LGO-general}, then $C\cap\Sigma$ is finite.
\end{theorem}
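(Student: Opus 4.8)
The plan is to run the Pila--Zannier strategy of \citep[Sections~5--8]{Lattices}, feeding in the type~III and~IV analogues proved in Sections~2--3 above. Two cases are immediate. If $C$ is itself special then $S=C$, $\dim S\le 1$, $\Omega=\emptyset$, and there is nothing to prove. If $C$ is weakly special but not special then the generic abelian variety over $C$ splits, up to isogeny, as a fixed factor times a non-trivially moving factor; since in a simple PEL type~III or~IV family the abelian variety is isotypic and the dimension of such a family grows strictly faster than that of the fixed factor's family, this is incompatible with any $s\in C$ being endomorphism generic in a \emph{simple} PEL type~III/IV special subvariety of dimension $\le\dim S-2$, so $C\cap\Sigma=\emptyset$. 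Assume henceforth that $C$ is Hodge generic in $S$ and not weakly special. By \cref{conjclass}, \cref{conjclass1} and \cref{conj-class-datum} the general Lefschetz groups of the members of $\Omega$ lie in finitely many $\gG(\bR)^+$-conjugacy classes, indexed by tuples $(d,e,m,\textbf{s})$ with $kem=2g$ whose associated pre-special subvarieties have dimension $\le\dim S-2$; there are finitely many such tuples, so it suffices to show $C\cap\Sigma_{d,e,m,\textbf{s}}$ is finite for one fixed tuple. Fix it, let $\gH_0$ be as in \eqref{eqn:H0}, and take $\rho_L\colon\gG\to\gGL(\Lambda_\QQ)$, $w_0\in\Lambda$ and the constants $\refC{rep-multiplier},\refC{rep-exponent}$ from \cref{IMP}.

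The orbit $Y:=\rho_L(\gG(\bR))w_0$ is closed, hence semialgebraic, and parametrises the pre-special subvarieties $X_y=uX_{\gH_0}^+$ (for $y=\rho_L(u)w_0$). Fix an $\Ranexp$-definable fundamental set $\cF$ for $\Gamma$ acting on $\Hg$, let $\cC$ be an analytic component of $\pi_g^{-1}(C)\cap\cF$, and put $\mathcal{D}=\{(y,z)\in Y\times\cC : z\in X_y\}$, definable in $\Ranexp$. For a rational $y\in Y$, $\pi_g(X_y)$ is a special subvariety of PEL type in the fixed conjugacy class --- this is certified by the variant \cref{conj-class-datum} of \citep[Lemma~3.15]{UY13} together with the continuous path of \cref{ctspath} and the disjointness of conjugacy classes. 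Conversely, if $s\in C\cap\Sigma_{d,e,m,\textbf{s}}$ is endomorphism generic in $Z_s\in\Omega$ with $R_s:=\End(A_s)$, then choosing a lift $z\in\cC$ of $s$ and $u$ with $Z_s=\pi_g(uX_{\gH_0}^+)$, part~(iii) of \cref{IMP} --- whose proof rests on the generalised \citep[Proposition~7.1]{Lattices}, hence on the parameter height bound \cref{height-bound}, and on the non-commutativity inputs \cref{Rs} and \cref{w1w2} and the signature apparatus of \cref{unitrary} --- produces a point $(y_s,z)\in\mathcal{D}$ with $y_s$ rational and $H(y_s)\le\refC{rep-multiplier}\abs{\disc(R_s)}^{\refC{rep-exponent}}$.

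Suppose for contradiction that $C\cap\Sigma_{d,e,m,\textbf{s}}$ is infinite, and let $L_0$ be a finitely generated field of definition of $C$. If only finitely many discriminants $\abs{\disc(R_s)}$ occur then $\mathcal{D}$ already has infinitely many rational points in a fixed bounded region. Otherwise choose $s$ with $\abs{\disc(R_s)}$ large: each $\Aut(\bC/L_0)$-conjugate of $s$ again lies in $C\cap\Sigma_{d,e,m,\textbf{s}}$ with the same discriminant, hence contributes its own rational point of $\mathcal{D}$ under the same height bound, and the assignment is finite-to-one, so by \cref{LGO-general} there are at least $c\abs{\disc(R_s)}^{c'}$ rational points of $\mathcal{D}$ of height at most $T:=c''\abs{\disc(R_s)}^{c'''}$, where $c,c',c''$ depend only on $g,L_0,C$ and the fixed tuple. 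Taking $\epsilon$ small relative to $c'/c'''$, in the first situation the count of bounded-height rational points is infinite, and in the second it exceeds the Pila--Wilkie bound for $T$ large; either way the Habegger--Pila--Wilkie counting theorem \citep[Corollary~7.2]{HP16} forces $\mathcal{D}$ to contain a connected, positive-dimensional semialgebraic block $B$ passing through at least two of these rational points.

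Ruling out such a block $B$ is the crux. The rational points of $\mathcal{D}$ arising from distinct points of $C\cap\Sigma_{d,e,m,\textbf{s}}$ have pairwise distinct $\cC$-coordinates, so the projection of $B$ to $\cC$ is positive-dimensional; by the hyperbolic Ax--Lindemann theorem, applied as in \citep[Section~8]{Lattices}, this forces $C$ into a proper special subvariety of $\cA_g$ --- here one uses that the pre-special subvarieties swept out by $B$ pass through points of $C$ that are endomorphism generic in them, which rigidifies the family and upgrades the a priori weakly special conclusion to a special subvariety of dimension $\le\dim X_{\gH_0}\le\dim S-2$. But $C$ cannot lie in such a subvariety, by the minimality of $S$. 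Hence $C\cap\Sigma_{d,e,m,\textbf{s}}$ is finite for every tuple, so $C\cap\Sigma$ is finite, proving \cref{ZP-end}. I expect the genuinely new work to be the verification that \cref{IMP} (equivalently the generalised \citep[Proposition~7.1]{Lattices}) goes through for types~III and~IV, and the subtlest point to be the final Ax--Lindemann step --- making the passage from a positive-dimensional block to a special subvariety containing $C$ completely rigorous, relative to the smallest special subvariety $S$ rather than to $\cA_g$ itself.
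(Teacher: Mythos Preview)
Your overall strategy matches the paper's: decompose $\Sigma$ into finitely many $\Sigma_{d,e,m,\textbf{s}}$, build a definable family from \cref{IMP}, count rational parameter points via Galois orbits and \cref{LGO-general}, apply Habegger--Pila--Wilkie, and rule out the resulting block. The paper carries out exactly this, deferring the final block-elimination to \citep[Corollary~8.4]{Lattices} and \citep[Corollary~6.4]{DO21b} together with the continuous-path argument \eqref{ctspath} and \cref{conj-class-datum}. Two points, however, need correction.

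First, your opening case analysis should be dropped. The reduction ``$C$ is Hodge generic in $S$'' is vacuous: by definition of $S$ this always holds. More importantly, your argument that $C\cap\Sigma=\emptyset$ when $C$ is weakly special but not special is not justified. You assert that the generic isogeny splitting along $C$ is ``incompatible'' with any $s\in C$ being endomorphism generic in a simple PEL~III/IV subvariety, but the fixed isogeny factor could itself be a power of a simple abelian variety of the relevant type, and at special $s$ the moving factor can degenerate to a further power of the same simple variety, so $A_s$ can be isotypic (hence the generic endomorphism ring of a simple PEL locus) without contradiction. The paper makes no such reduction: the main argument runs uniformly, and the contradiction at the end is with the \emph{minimality of $S$} (the block forces $C$ into a special subvariety of dimension $\le\dim S-2<\dim S$), not with Hodge genericity in $\cA_g$.

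Second, you jump from \cref{IMP}(iii) directly to a rational point $(y_s,z)\in\mathcal{D}$ with $z\in\cC\subset\cF$ and $H(y_s)$ bounded. This skips a genuine step. \cref{IMP}(iii) produces, for some $u$ with $\gH_u=u\gH_{0,\bR}u^{-1}$ defined over $\mathbb{Q}$, a vector $w_u$ with $\rho_L(u)w_u\in\Lambda$ and $|w_u|$ bounded; it does \emph{not} arrange that a lift of $s$ lies in the fixed fundamental set $\cF$, nor that the parameter point itself lies in $\Lambda$ with bounded height. The paper inserts here the quantitative reduction theory \citep[Theorem~1.2]{DO21b}: one finds a finite $B_u\subset\Gamma$ with $B_u\cF_\gG u^{-1}\cap\gH_u(\bR)$ a fundamental set for $\Gamma\cap\gH_u(\bR)$ and $|\rho_L(b^{-1}u)w_u|$ polynomially bounded in $|w_u|$. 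This is precisely what yields \cref{z-w}. Relatedly, your parameter space $Y=\rho_L(\gG(\bR))w_0$ omits the factor $\Aut_{\rho_L(\gG)}(\Lambda_\bR)$ that the paper carries in its set $W$; without it one cannot in general land in $\Lambda$. Once you restore this step and the correct $W$, your outline coincides with the paper's proof.
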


Conjecture \ref{LGO-general} is the natural generalisation of Conjecture \ref{GO0}.

\begin{conjecture}\label{LGO-general}
Let $C$ and $\Sigma$ be as in Theorem \ref{ZP-end} and let $L$ be a finitely generated subfield of $\CC$ over which $C$ is defined.
Then there exist positive constants $\newC{ZPem}$ and $\newC{ZPee2}$ such that
\begin{align*}
    \#\Aut(\CC/L)\cdot s\geq\refC{ZPem}|\disc(\End(A_s))|^{\refC{ZPee2}}
\end{align*}
for all $s\in C\cap\Sigma$.
\end{conjecture}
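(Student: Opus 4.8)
Conjecture \ref{LGO-general} is the Large Galois Orbits input of the Pila--Zannier strategy, and in full generality it is open; what I would try to establish is the unconditional special cases, following the scheme that works in \cref{LGOtheorem}. Fix $s\in C\cap\Sigma$, so that up to isogeny $A_s$ is the endomorphism-generic fibre of some special subvariety $Z\in\Omega$ of simple PEL type III or IV, and write $R=\End(A_s)$, an order in a division algebra $D$ of the relevant Albert type. Let $L$ be a finitely generated field of definition of $C$ (enlarging it, also of $S$) and set $K=L(s)$, the field of moduli of $s$ over $L$, so that $\#\Aut(\CC/L)\cdot s=[K:L]$. The first step is to remove the transcendental parameters: by choosing a model of $C$ over a finitely generated $\ZZ$-algebra and specialising at a suitable maximal ideal, reduce to the case where $L$ is a number field, whereupon it suffices to bound $[K:\QQ]$ from below by a fixed power of $\abs{\disc(R)}$.

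The plan for this lower bound has two ingredients. First, an isogeny estimate: by the refined Masser--Wüstholz bounds (in the form due to Orr), any two principally polarised abelian varieties over a number field carrying isomorphic PEL endomorphism data are linked by an isogeny whose degree is polynomially bounded in $\abs{\disc(R)}$ and in the degree of the common field of definition; this lets one package the $\Gal$-conjugates of $s$ as points of $C$ lying in a single ``isogeny-and-endomorphism'' class. Second, a counting lower bound: the number of points of $\Ag$ with endomorphism order of discriminant $\abs{\disc(R)}$ in this class is at least a positive power of $\abs{\disc(R)}$, which comes from a Brauer--Siegel/Eichler-type lower bound for the relevant class numbers attached to orders in $D$ (a totally definite quaternion algebra over a totally real field in type III, a division algebra with CM centre in type IV). Combining these with the fact that $C$ meets any fixed positive-dimensional special subvariety in only finitely many points (and $\dim Z\le\dim S-2$ by \cref{codim-pel}, so these are genuinely ``unlikely'' endomorphisms) forces $\#\Aut(\CC/L)\cdot s=[K:L]$ to grow with $\abs{\disc(R)}$.

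The step I expect to be the real obstacle — and the reason \cref{LGO-general} is a conjecture rather than a theorem — is converting the class-number growth into a degree lower bound \emph{for points constrained to the fixed curve $C$}: a priori the Galois conjugates of $s$ could accumulate on $C$, and excluding this needs either an unconditional Faltings-height lower bound for such $s$ (available via the averaged Colmez conjecture essentially only when $Z$ is a point, i.e.\ the CM case handled by Tsimerman's work) or additional geometric input on $C$. When $C$ is defined over $\overline{\QQ}$ and its Zariski closure in the Baily--Borel compactification meets the zero-dimensional boundary stratum, that input is exactly what Daw--Orr provide via the unipotent monodromy at the cusp and a geometry-of-numbers analysis of the semiabelian degeneration; in that case \cref{LGO-general} follows at once by applying \cref{LGOtheorem} to the points of $C\cap\Sigma$. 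For a general curve $C$ the conjecture has to be assumed, exactly as in the hypotheses of \cref{ZP-end} and \cref{zp1}.
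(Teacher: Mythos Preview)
The statement in question is a \emph{conjecture}, not a theorem, and the paper does not prove it: it is introduced explicitly as ``the natural generalisation of Conjecture~\ref{Galois-orbits}'' and is used only as a hypothesis in Theorem~\ref{ZP-end}. So there is no ``paper's own proof'' to compare against.

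You clearly understand this --- your final paragraph says exactly that for a general curve $C$ the conjecture has to be assumed --- so in that sense your proposal is correct. The earlier paragraphs, however, are framed as a proof attempt and could mislead a reader into thinking you are claiming a general argument. The Masser--W\"ustholz/isogeny-estimate and class-number-growth sketch you outline is a reasonable heuristic for why one expects the conjecture to hold, but it is not a proof: as you yourself note, the genuine obstruction is converting endomorphism-class growth into a lower bound on $[K:L]$ for points constrained to lie on the fixed curve $C$, and no general mechanism for this is known. The only unconditional case available is the one you cite at the end, namely \cref{LGOtheorem}, which requires $C$ to be defined over $\overline{\QQ}$ and to meet the zero-dimensional boundary stratum of the Baily--Borel compactification; in that case the conjecture is indeed a theorem, and this is precisely how the paper obtains the unconditional \cref{CorLGO}. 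I would recommend rewriting your response to lead with the observation that the statement is an open conjecture assumed as a hypothesis, and then record the known unconditional case as a remark rather than as the main body of a purported proof.
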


If we denote by $\pi:X^+\to\cA_g$ the uniformising map, then $\pi|_{\cF}$ is definable in the o-minimal structure $\RR_{\rm an,exp}$ for some  $\cF$ a fundamental set in $X^+$. (see \citep{Lattices} for details).

Let $\Sigma$ be the union of set $\Sigma_{d,e,m,\textbf{s}}$, where $d$, $e$, $m$ are positive integers satisfying $d^2em=2g$, and $\textbf{s}$ denote the particular signature (only one signature for type III).
Since there are only finitely many choices for such $d$, $e$, $m$ and $\textbf{s}$ (given~$g$), in order to prove \cref{ZP-end}, it suffices to prove that $C \cap \Sigma_{d,e,m,\textbf{s}}$ is finite for each $d$, $e$, $m$, $\textbf{s}$.

From now on, we fix such integers $d$, $e$ $m$ and $\textbf{s}$.  Let $\mathcal{H} \subset \gG$ be the group defined by \cref{eqn:H0} associated with these parameters.
Let $X_i^+ = \mathcal{H}(\bR)^+h_i$, so that $(\mathcal{H}, X_i^+)$ is the Shimura subdatum component of $(\gG, X^+)$ given by finitely many choices of \cref{conj-class-datum}.

Following \cref{IMP}, we can use \citep[Theorem 1.2]{DO21b}, to show that there exist positive constants $\newC{bm21}$ and $\newC{ber1}$ with the following property: for every $u\in\gG(\RR)$ and $w_u\in \Aut_{\rho_L(\gG)}(\Lambda_\bR) w_0$ such that $\mathcal{H}_u = u \mathcal{H}_{0,\bR} u^{-1}$ is defined over~$\mathbb{Q}$ and $\rho_L(u) w_u \in \Lambda$, there exists a fundamental set for $\Gamma\cap\mathcal{H}_u(\RR)$ in $\mathcal{H}_u(\RR)$ of the form 
\[ B_u\cF_\gG u^{-1}\cap\mathcal{H}_u(\RR),\]
where $B_u\subset\Gamma$ is a finite set such that
\[\abs{\rho_L(b^{-1}u)w_u} \leq \refC{bm21} \abs{w_u}^{\refC{ber1}}\]
for every $b\in B_u$.

\medskip

For any $w\in\Lambda_\RR$, we write $\gG(w)$ for the real algebraic group $\Stab_{\gG_\RR,\rho_L}(w)$. Fixing a basis for $\Lambda$, we may refer to the height $\rH(w)$ of any $w\in\Lambda$ (namely, the maximum of the absolute values of its coordinates with respect to this basis.)

Pick $X_i^{+}$ such that $(\mathcal{H}, {X_i}^{+})$ be Shimura datum generating one of the special subvariety in $\Omega$. Pick $h_i$ be an element of ${X_i}^+$
\begin{lemma} \label{z-w}
Let $P\in\Sigma_{d,e,m,\textbf{s}}$. There exists $z\in \pi^{-1}(P)\cap\cF$ and
\[w\in \Aut_{\rho_L(\gG)}(\Lambda_\bR)\rho_L(\gG(\RR)^+)w_0\cap\Lambda\] 
such that $z(\mathbb{S})\subset\gG(w)$ and
\[\rH(w)\leq \refC{bm21}\refC{rm1}^{\refC{ber1}} \abs{\disc(R)}^{\refC{re1}\refC{ber1}},\]
where $R=\End(A_P)\cong\End_{\gG(w)}(L)\subset \rM_{2g}(\ZZ)$.
\end{lemma}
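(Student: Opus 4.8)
The plan is to assemble the statement from \cref{IMP} and from the fundamental--set estimate recalled in the paragraph just above the lemma (a consequence of \citep[Theorem~1.2]{DO21b}), the only real work being the bookkeeping of $\Gamma$--translations so that all three required conclusions hold for one and the same pair $(z,w)$.

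First I would unpack the hypothesis $P\in\Sigma_{d,e,m,\textbf{s}}$: there is a proper special subvariety $Z\subset\cA_g$ of simple PEL type~III or~IV, with underlying group $\gG(\bR)$--conjugate to $\gH_0$, in which $P$ is endomorphism generic. Let $\gH_Z\subseteq\gG$ be its general Lefschetz group; it is a $\bQ$--subgroup of $\gG$, and since $A_P$ is simple the algebra $\End^0(A_P)$ is a division algebra, so (its commutant being a division algebra) $L_\bQ$ is irreducible as an $\gH_Z$--representation over $\bQ$, while $\End_{\gH_Z}(L)$ is an order of $\End^0(A_P)$ isomorphic to $R=\End(A_P)$. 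Using the finiteness furnished by \cref{conj-class-datum} and the fact that the relevant subdatum components form a single $\gG(\bR)^+$--orbit, one checks that the subdatum space $X_Z^+\subseteq X^+$ equals $uX_i^+$ for one of the finitely many base subdata $(\gH_0,X_i^+)$ fixed above and some $u\in\gG(\bR)^+$, with $\gH_{Z,\bR}=u\gH_{0,\bR}u^{-1}$; in particular $\gH_u:=u\gH_{0,\bR}u^{-1}=\gH_{Z,\bR}$ is defined over $\bQ$, so \cref{IMP}(iii) applies to this $u$. It yields $w_u\in\Aut_{\rho_L(\gG)}(\Lambda_\bR)w_0$ with $\rho_L(u)w_u\in\Lambda$ and $\abs{w_u}\le\refC{rep-multiplier}\abs{\disc(R)}^{\refC{rep-exponent}}$. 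Writing $w_u=aw_0$ with $a\in\Aut_{\rho_L(\gG)}(\Lambda_\bR)$, the fact that $a$ commutes with $\rho_L(\gG)$ together with \cref{IMP}(i) gives $\Stab_{\gG_\bR,\rho_L}(\rho_L(g)w_u)=g\gH_{0,\bR}g^{-1}$ for every $g\in\gG(\bR)$; in particular $\Gamma\cap\gH_u(\bR)$ fixes $\rho_L(u)w_u$.

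Next I would move a lift of $P$ into $\cF$. Write $\cF=\cF_\gG h_0$ with $\cF_\gG\subset\gG(\bR)^+$ the Siegel set, and take a lift $x$ of $P$ in $X_Z^+=\gH_{Z,\bR}^+\,uh_i$, say $x=\eta\,uh_i$ with $\eta\in\gH_{Z,\bR}^+$. Applying the fundamental--set description $B_u\cF_\gG u^{-1}\cap\gH_u(\bR)$ of $\Gamma\cap\gH_u(\bR)$ to $\eta$ gives $\gamma\in\Gamma\cap\gH_u(\bR)$, $b\in B_u$ and $f\in\cF_\gG$ with $\eta=\gamma b f u^{-1}$, hence $x=\gamma b f h_i=\gamma b f g_i h_0$, where $h_i=g_i h_0$ with $g_i$ from the finite list of base points. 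Since $\cF_\gG$ is a Siegel set and the $g_i$ are finite in number, $f g_i=\gamma' f'$ for some $\gamma'$ in a fixed finite subset $\Gamma'\subset\Gamma$ and $f'\in\cF_\gG$. Setting $\beta:=\gamma b\gamma'\in\Gamma$ and $z:=\beta^{-1}x=f'h_0\in\cF$, we have $\pi(z)=P$ and $z(\mathbb{S})=\beta^{-1}x(\mathbb{S})\beta\subseteq\beta^{-1}\gH_{Z,\bR}\beta$. Now put $w:=\rho_L(\beta^{-1})\rho_L(u)w_u=\rho_L(\beta^{-1}u)w_u$. Since $\rho_L(u)w_u\in\Lambda$ and $\rho_L(\Gamma)$ stabilises $\Lambda$, $w\in\Lambda$; since $w=a\,\rho_L(\beta^{-1}u)w_0$ with $\beta^{-1}u\in\gG(\bR)^+$, $w\in\Aut_{\rho_L(\gG)}(\Lambda_\bR)\rho_L(\gG(\bR)^+)w_0$. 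From the stabiliser formula above, $\gG(w)=\beta^{-1}\Stab_{\gG_\bR,\rho_L}(\rho_L(u)w_u)\beta=\beta^{-1}\gH_{Z,\bR}\beta\supseteq z(\mathbb{S})$, and $\End_{\gG(w)}(L)$ is $\Gamma$--conjugate to $\End_{\gH_Z}(L)$, hence $\cong R$ with the same discriminant. For the height, note $\gamma^{-1}\in\Gamma\cap\gH_u(\bR)$ fixes $\rho_L(u)w_u$, so $\rho_L(\beta^{-1}u)w_u=\rho_L((\gamma')^{-1}b^{-1})\rho_L(u)w_u=\rho_L((\gamma')^{-1})\bigl(\rho_L(b^{-1}u)w_u\bigr)$; the estimate $\abs{\rho_L(b^{-1}u)w_u}\le\refC{DO21b-multiplier}\abs{w_u}^{\refC{DO21b-exponent}}$ from the recalled paragraph, the finitely many (hence uniformly bounded) operators $\rho_L((\gamma')^{-1})$, and the comparison of $\rH(\cdot)$ with $\abs{\cdot}$ then give, after absorbing the constant factors into $\refC{DO21b-multiplier}$,
\[ \rH(w)\le\refC{DO21b-multiplier}\abs{w_u}^{\refC{DO21b-exponent}}\le\refC{DO21b-multiplier}\refC{rep-multiplier}^{\refC{DO21b-exponent}}\abs{\disc(R)}^{\refC{rep-exponent}\refC{DO21b-exponent}}. \]

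The genuinely new arithmetic --- the bound on $\abs{w_u}$ in terms of $\abs{\disc(R)}$ and the existence of fundamental sets $B_u\cF_\gG u^{-1}$ with the exponential estimate uniform in $u$ --- is already packaged in \cref{IMP} (resting on \cref{height-bound}) and in \citep[Theorem~1.2]{DO21b}. So the main obstacle here is purely organisational: producing a single $\beta\in\Gamma$ that simultaneously carries a lift of $P$ from $X_Z^+$ into the $\gG$--fundamental set $\cF$ and conjugates $\gH_{Z,\bR}$ onto $\gG(w)$, while ensuring that the $B_u$--translation $b$ and the finitely many base--point discrepancies $g_i$ do not spoil the height bound. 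The subtle point that makes this work is that left multiplication by an element of $\Gamma\cap\gH_u(\bR)$ does not change $\rho_L(u)w_u$ (because that group is exactly its stabiliser), so the $\gamma$--part of $\beta$ is invisible to the estimate and only the harmless finite pieces $b$ and $\gamma'$ remain.
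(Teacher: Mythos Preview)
Your approach is essentially the one the paper has in mind (it simply refers back to \citep[Lemma~8.3]{Lattices}, where in the type~I/II setting there is a single base point $h_0$ so the issue of multiple $h_i$'s does not arise). Your reconstruction is careful and the bookkeeping of $\Gamma$-translates is correct, with one exception.

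The step ``Since $\cF_\gG$ is a Siegel set and the $g_i$ are finite in number, $fg_i=\gamma' f'$ for some $\gamma'$ in a fixed finite subset $\Gamma'\subset\Gamma$ and $f'\in\cF_\gG$'' is not justified as stated. Right-translating a Siegel set by an arbitrary element $g_i\in\gG(\bR)^+$ need not produce a set covered by finitely many left $\Gamma$-translates of the original Siegel set. Already for $\gSL_2$ one sees this: with $\cF_\gG$ the standard Siegel set and $g$ chosen so that $g\cdot i=\tfrac{1+i}{2}$, the image $\cF_\gG g\cdot i$ runs off to the cusp along the line $\Re(z)=\Im(z)$ and so meets infinitely many horizontal $\Gamma$-translates of the standard fundamental domain. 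The Siegel finiteness property you are implicitly invoking requires $g_i\in\gG(\bQ)$, which you have no reason to expect here.

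The fix is painless and does not disturb the rest of your argument: simply take the fundamental set to be $\cF=\bigcup_i\cF_\gG h_i$ from the outset (a finite union, since there are only finitely many $h_i$ by \cref{conj-class-datum}). This is still a definable set with $\pi|_{\cF}$ definable and surjective, and with this choice $z:=b^{-1}\gamma^{-1}x=fh_i\in\cF$ directly, so $\beta=\gamma b$ and no $\gamma'$ is needed. All subsequent steps --- the verification that $w\in\Lambda$, that $\gG(w)=\beta^{-1}\gH_{Z,\bR}\beta\supseteq z(\bS)$, and the height bound via $\rho_L(\gamma^{-1})\rho_L(u)w_u=\rho_L(u)w_u$ --- go through verbatim, and indeed become slightly cleaner since there is now no spurious $\gamma'$-factor to absorb into the constants.
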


Proof of this lemma is same as the proof of \citep[Lemma 8.3] {Lattices} with the given fixed signature  $\textbf{s}$.

We can now deduce,

\begin{corollary}\label{prop:finitely-many}
Define $\Sigma \subset \Ag$ as in \cref{zp1}.
For each $b\in\RR$, the points $s\in\Sigma$ such that $\abs{\disc(\End(A_s))} \leq b$ belong to only finitely many proper special subvarieties of PEL IV type.
\end{corollary}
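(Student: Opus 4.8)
The plan is to read this off from the parameter height bound \cref{z-w} together with the finiteness statements in \cref{IMP} and \cref{conj-class-datum}. First, note that for fixed $g$ there are only finitely many quadruples $(d,e,m,\textbf{s})$ with $kem=2g$ ($k=4$ in type~III, $k=2d^2$ in type~IV) and $\textbf{s}$ a signature, and every $s\in\Sigma$ lies in $\Sigma_{d,e,m,\textbf{s}}$ for some such quadruple, determined by $\End(A_s)$. Hence it suffices to fix one quadruple and show that the points $s\in\Sigma_{d,e,m,\textbf{s}}$ with $\abs{\disc(\End(A_s))}\le b$ lie in only finitely many proper special subvarieties of simple PEL type III or IV.

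Fix such a quadruple, together with the group $\gH_0$ of \eqref{eqn:H0} and the data $\Lambda,\rho_L,w_0$ of \cref{IMP}. Let $s$ be a point as above, put $R=\End(A_s)$, so $\abs{\disc(R)}\le b$. By \cref{z-w} there are a lift $z\in\pi^{-1}(s)\cap\cF$ and a vector $w\in\Aut_{\rho_L(\gG)}(\Lambda_\bR)\rho_L(\gG(\RR)^+)w_0\cap\Lambda$ with $z(\mathbb{S})\subseteq\gG(w)$ and $\rH(w)\le\refC{DO21b-multiplier}\refC{rep-multiplier}^{\refC{DO21b-exponent}}b^{\refC{rep-exponent}\refC{DO21b-exponent}}$, a bound depending only on $b$ and the quadruple. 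Since $\Lambda$ is a finitely generated free $\ZZ$-module with a fixed basis, only finitely many $w\in\Lambda$ meet this height bound; call this finite set $W_b$.

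For each $w\in W_b$, the rationality of $w$ makes $\gG(w)=\Stab_{\gG,\rho_L}(w)$ a $\QQ$-algebraic subgroup of $\gG$, and by parts (i) and (ii) of \cref{IMP} together with the shape of the orbit it is a conjugate over $\RR$ of $\gH_{0,\RR}$; in particular $\gG(w)$ is reductive, and $\End_{\gG(w)}(L)\cong R$ by \cref{z-w}. Because $z(\mathbb{S})\subseteq\gG(w)_\RR$ and $z\in X^+$, the morphism $z\colon\mathbb{S}\to\gG(w)_\RR$ satisfies the Shimura datum axioms, so $(\gG(w),X_{\gG(w)}^+)$ is a Shimura subdatum component of $(\gG,X^+)$ for a suitable $X_{\gG(w)}^+\ni z$; by \cref{conj-class-datum} there are only finitely many possibilities for $X_{\gG(w)}^+$, each giving a proper special subvariety of simple PEL type III or IV. Using the double-centraliser structure of $\gH_0=\gGSp\cap E_0$ and the identification $\End_{\gG(w)}(L)\cong\End(A_s)$, one checks that $\gG(w)$ (or its identity component) is the general Lefschetz group of the special subvariety in which $s$ is endomorphism generic, so that subvariety occurs among the finitely many just produced. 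Ranging over $W_b$ and over the finitely many quadruples gives the finite list required.

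The main obstacle is this last step: turning the stabiliser group $\gG(w)$ into an honest Shimura subdatum component and matching it with the special subvariety in which $s$ is endomorphism generic. One must handle the factor $\Aut_{\rho_L(\gG)}(\Lambda_\bR)$ appearing in the orbit carefully enough to keep $\gG(w)$ the underlying group of a Shimura subdatum (reductive, with $z$ landing in its associated hermitian domain), and then invoke the double-centraliser property of $\gH_0$ to identify $\gG(w)$ with the relevant Lefschetz group; the remaining finiteness is precisely \cref{conj-class-datum}. Everything else is bookkeeping on the constants coming from \cref{z-w} and \cref{IMP}.
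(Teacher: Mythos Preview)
Your approach is essentially the same as the paper's: reduce to a fixed quadruple $(d,e,m,\textbf{s})$, apply \cref{z-w} to get $w\in\Lambda$ of bounded height, use that $\Lambda$ is a lattice to conclude there are only finitely many such $w$, and then invoke \cref{conj-class-datum} to deduce that each $\gG(w)$ supports only finitely many Shimura subdatum components.

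The one place where you are more tentative than necessary is your ``main obstacle'' paragraph. The paper does not need a separate double-centraliser argument here: the proof of \cref{z-w} (following \citep[Lemma~8.3]{Lattices}) already builds $w$ so that the special subvariety $Z$ in which $s$ is endomorphism generic is precisely the image in $\cA_g$ of an orbit of $\gG(w)^{\der}(\RR)$. As for the factor $\Aut_{\rho_L(\gG)}(\Lambda_\bR)$, it commutes with $\rho_L(\gG)$ by definition, so if $w=a\,\rho_L(u)w_0$ with $a$ in this automorphism group, then $\Stab_{\gG,\rho_L}(w)=\Stab_{\gG,\rho_L}(\rho_L(u)w_0)=u\gH_0u^{-1}$; thus $\gG(w)$ is an honest $\gG(\bR)$-conjugate of $\gH_0$ and the matching with the Lefschetz group is immediate. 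With that clarification, your outline is complete and coincides with the paper's proof.
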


\begin{proof}
Letting $s=P \in \Sigma_{d,e,m,\textbf{s}}$, and, applying \cref{z-w}, we obtain $w\in\Lambda$ satisfying
\begin{align*}
\rH(w)\leq \refC{bm21}\refC{rm1}^{\refC{ber1}} \abs{\disc(R)}^{\refC{re1}\refC{ber1}}
\end{align*}
such that $Z$ is the image in $\cA_g$ of an orbit of $\gG(w)^\der(\RR)$. By \cref{conj-class-datum}, there are only finitely many Shimura subdatum of $(\gG,X)$ associated with $\gG(w)$ and so the result follows.
\end{proof}

\subsection{Proof of Theorem \ref{ZP-end}}\label{FP}
We now apply Pila-Zannier strategy, where Parameter Height bound and LGO are the arithmetic ingredients.


Let $L$ be a finitely generated field of definition for $C$. 
Let $P\in C\cap\Sigma_{d,e,m,\textbf{s}}$, then $P$ is endomorphism generic in some special subvariety, $ Z \in \Omega$ (where $\Omega$ is defined in Theorem \ref{ZP-end}). Then $Z$ is an irreducible component of $\cM_R$, where $R = \End(A_P)$. 
Since $\cM_R$ is $\Aut(\CC)$-invariant and its (analytic) irreducible components are algebraic subvarieties of $\Ag$, for each $\sigma \in \Aut(\CC)$, $\sigma(Z)$ is also an irreducible component of $\cM_R$.
Thus, $\sigma(Z)$ is also a special subvariety of PEL III/IV type with the same parameters $d,e,m$. Furthermore, $\dim(\sigma(Z)) = \dim(Z)$, so $\sigma(Z) \in \Omega$. Since $\End(A_{\sigma(P)}) \cong \End(A_P)$, $\sigma(P)$ is endomorphism generic in $\sigma(Z)$, so $\sigma(P) \in \Sigma_{d,e,m}$.

In this paper \textit{definable} will mean \textit{definable
in the o-minimal structure $\mathbb{R}_{\textnormal{an}, \exp}$.}
See \citep{Pila11} for an introduction o-minimality and
\citep{DM94, DMM94} for the properties of $\mathbb{R}_{\textnormal{an}, \exp}$.
Let $\mathcal{C}=\pi^{-1}(C)\cap\mathcal{F}$ -- a set definable in the o-minimal structure $\RR_{\rm an,exp}$ (see \citep{KUY16} for more details).  Let say there are $r$ (finite) different possible signatures.
By pigeonhole principle, we have at least one $\textbf{s}$ such that $\#(\Aut(\CC/L) \cdot\sigma(P) \cap \Sigma_{d,e,m,\textbf{s}}) \geqslant 1/r \cdot \#\Aut(\CC/L)\cdot\sigma(P)$. Fix such $\textbf{s}$, so we obtain points $\sigma(P)\in C\cap\Sigma_{d,e,m,\textbf{s}}$ and, for each $\sigma$, we let $z_\sigma\in\mathcal{F}\cap\pi^{-1}(\sigma(P))$ and we let $W=\Aut_{\rho_{L}(\gG)}(\Lambda_\RR)\rho_{L}(\gG(\RR)^{+})w_0$ and $w_\sigma\in W \cap\Lambda$ be the elements afforded to us by Proposition \ref{z-w}. That is, $z_\sigma(\mathbb{S})\subset \gG(w_\sigma)$ and 
\begin{align*}
\rH(w)\leq \refC{bm21}\refC{rm1}^{\refC{ber1}} \abs{\disc(R)}^{\refC{re1}\refC{ber1}},
\end{align*}

Note that we also have $z_\sigma\in\mathcal{C}$.

We obtain a set $\Theta$ of tuples $(w_\sigma,z_\sigma)\in\Lambda\times\mathcal{C}$ belonging to the definable set
\[D=\{(w,z)\in \Lambda_\RR\times\mathcal{C}: w\in W,\ z(\mathbb{S})\subset \gG(w)\}.\]

Let $\pi_1:D\rightarrow\Lambda_\RR$ and let $\pi_2:D\rightarrow\mathcal{C}$ denote the projection maps. By \cref{LGO-general}, for a positive constant $\newC{alphaaa}$ we have
\begin{align*}
A:=\#\pi_2(\Theta) \geq 1/r \cdot \#\Aut(\CC/L)\cdot P & =  1/r \cdot \#\Aut(\CC/L)\cdot\sigma(P)
\\& \geq  1/r \cdot \refC{ZPem} \abs{\disc(R_\sigma)}^{\refC{ZPee2}} \geq \refC{alphaaa} H(w_\sigma)^{\refC{ZPee2}/{\refC{re1}\refC{ber1}}}.
\end{align*}

Now we can use essentially the same proof as in \citep[Corollary 8.4]{Lattices}, incorporating details from \citep[Corollary 6.4]{DO21b} with 
\begin{align}\label{ctspath}
    (g^{-1}_tz_t)(\mathbb{S})\subset g^{-1}_t \gG(v_t)g_t=\gG(g^{-1}_tw_t)=\gG(w_0) = \mathcal{H}(\RR).
\end{align}
Using \cref{conj-class-datum} and the fact that all of the conjugacy classes are disjoint from one another, the continous path $g^{-1}_tz_t$ lies on the unique pre-special subvariety of $\Hg$ associated with $\mathcal{H}(\RR)$, in one of the conjugacy classes of, ${X_i}^+$.

\subsection{Proof of Theorem \ref{cor2}}
We also have the following result due to Fite-Goodman \citep{FG25}.
\begin{theorem}\label{FG} \citep[Cor 1.2]{FG25}
    Let $A$ be an abelian threefold defined over number field $K$ such that $\End^0(A)=M$, where $M$ is an imaginary qudratic field. Then $M$ has class number, $\textnormal{cl}(M)$ $\leq [K:K \cap M]$.
\end{theorem}

\begin{corollary}\label{impcor}
Let $ \Sigma $ denote the set of points $ s \in \mathcal{A}_3(\mathbb{C}) $ for which the endomorphism ring of the associated abelian variety $ A_s $ is $ \mathcal{O}_M $, where $ M $ is an imaginary quadratic field. For any $ s \in \Sigma $, there exist constants $ c_1, c_2 > 0 $ such that
\[
[\mathbb{Q}(s) : \mathbb{Q}] \geq c_1  \abs{\disc(\End(A_s)) }^{c_2}.
\]
\end{corollary}

\begin{proof}
Let $ \mathfrak{A} \to   \mathcal{A}_{3} $ be an abelian scheme. This abelian scheme induces a morphism of varieties $
q :  \mathcal{A}_{3,3}  \to \mathcal{A}_3,$ 
defined over $ \mathbb{Q} $. Let $ \widetilde{\mathcal{A}_3} $ be an irreducible component of the preimage of $\mathcal{A}_3$ in $ \mathcal{A}_{3,3} $, defined over a finite extension $ \widetilde{L} $ of $\mathbb{Q}$. The universal abelian scheme over $ \mathcal{A}_{3,3} $ restricts to an abelian scheme $\widetilde{\mathfrak{A}} \to \widetilde{\mathcal{A}_3}.$ If $ s \in \Sigma $, then there exists a point $ \widetilde{s} \in \widetilde{\mathcal{A}_3} $ such that $ q(\widetilde{s}) = s $. Thanks to \cref{FG} and Siegel's theorem, for any $ \varepsilon > 0 $, there exists a constant $ c(\varepsilon) > 0 $ such that
\[
[\widetilde{L}(s) : \widetilde{L}(s) \cap M] \geq \operatorname{cl}(M) \geq c(\varepsilon) \left| \operatorname{disc}(M) \right|^{1/2 - \varepsilon} = c(\varepsilon)\abs{\disc(\mathcal{O}_M)}^{1/2 - \varepsilon}.
\]

In other words, $[\mathbb{Q}(\widetilde{s}) : \mathbb{Q}] \geq c(\varepsilon)/2 \abs{\disc(\mathcal{O}_M)}^{1/2 - \varepsilon}.$ Since $q$ is a finite morphism, there exists $ c_k > 0 $ such that
\[
[\mathbb{Q}(s) : \mathbb{Q}] \geq c_k \cdot [\mathbb{Q}(\widetilde{s}) : \mathbb{Q}].
\]
\end{proof}

\section{Proof of Theorem~\ref{height-bound}} \label{sec:height bound}

In this section we establish our main height estimate for weakly unitary bases
associated with skew-Hermitian forms.  
The argument proceeds by an adaptation of the Gram--Schmidt procedure and follows
the same inductive strategy as in \citep[Section~4]{Lattices}, though the presentation
here is self-contained.

At first glance, one might try to pick a shortest nonzero vector
$v_1\in V$ with $\psi(v_1,v_1)\neq 0$; however, as explained in
\citep[Section~4]{Lattices}, such a vector may not always exist.
Thus we begin by selecting either:
\begin{enumerate}[(1)]
\item a short vector $v_1\in V$ for which $\psi(v_1,v_1)\neq 0$, or
\item a pair of short vectors $v_1,v_2\in V$ such that $\psi$ restricts to a
      nondegenerate form on $Dv_1+Dv_2$, and such that $v_1,v_2$ already form a
      weakly unitary basis of this $D$-subspace.
\end{enumerate}

The next lemma is the key device used to identify such short vectors.

\begin{lemma}[{\citep[Lemma~4.3]{Lattices}}]\label{short1}
Let $(D,\dag)$ be a division algebra over $\mathbb{Q}$ with positive involution, and let
$V$ be a left $D$-module of dimension $m$ equipped with a nondegenerate
$(D,\dag)$-skew-Hermitian form $\psi$.  
Assume also that $\abs{\cdot}$ is a $D$-norm on $V_\mathbb{R}$, and let
$w_1,\dots,w_m$ be a $D$-basis.  
Then there exist indices $i,j\in\{1,\dots,m\}$ such that:
\begin{enumerate}[(i)]
\item $\abs{w_i}\abs{w_j}\le (\abs{w_1}\cdots\abs{w_m})^{2/m}$,
\item $\psi(w_i,w_j)\neq 0$, and
\item $\psi(w_i,w_i)=0$ whenever $i\neq j$.
\end{enumerate}
\end{lemma}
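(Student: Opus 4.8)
The plan is to reduce the statement to a combinatorial fact about the ``support graph'' of $\psi$ in the basis $w_1, \dotsc, w_m$, using non-degeneracy to constrain that graph. First I would reorder the basis so that $\abs{w_1} \leq \dotsb \leq \abs{w_m}$ and write $a_k = \abs{w_k} > 0$; since condition~(i) is invariant under scaling all the $a_k$ by a common positive factor, I may assume $a_1 \dotsm a_m = 1$, so that (i) becomes $a_i a_j \leq 1$. Call an index $k$ \emph{small} if $a_k \leq 1$ and \emph{large} otherwise; as $a_1$ is the minimum, $a_1 \leq 1$, so there is at least one small index, say $r \geq 1$ of them, namely $1, \dotsc, r$. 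Call a pair $(i,j)$ \emph{admissible} if it satisfies (ii) and (iii), i.e. $\psi(w_i, w_j) \neq 0$ and ($i = j$ or $\psi(w_i, w_i) = 0$). Arguing by contradiction, I suppose that every admissible pair $(i,j)$ has $a_i a_j > 1$.

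Next I would read off the structure this forces. Non-degeneracy gives, for each $k$, some $j$ with $\psi(w_k, w_j) \neq 0$. If $\psi(w_k, w_k) \neq 0$ then $(k,k)$ is admissible, whence $a_k^2 > 1$ and $k$ is large; so $\psi(w_k, w_k) = 0$ for every small $k$. For a small $k$ the index $j$ above then satisfies $j \neq k$, so $(k,j)$ is admissible; since $a_k a_j > 1$ and $a_k \leq 1$ we get $a_j > 1$, so every neighbour of a small index is large. Likewise $\psi(w_k, w_{k'}) = 0$ for all small $k, k'$, since otherwise $(k, k')$ would be admissible with $a_k a_{k'} \leq 1$. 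Hence $W := Dw_1 + \dotsb + Dw_r$ is totally isotropic for $\psi$. Because $\psi$ is non-degenerate, $y \mapsto \psi(\,\cdot\,, y)$ is a $\dag$-semilinear isomorphism of $V$ onto $\Hom_D(V, D)$, which yields $\dim_D W^\perp = m - \dim_D W$ for every $D$-subspace $W$; applied to the totally isotropic $W$ this gives $r \leq m/2$, so there are at least $r$ large indices.

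The heart of the argument is then a matching step. For $A \subseteq \{1, \dotsc, r\}$ set $N(A) = \{\, j : \psi(w_k, w_j) \neq 0 \text{ for some } k \in A \,\}$, a set of large indices by the previous paragraph. A vector $\sum_l b_l w_l$ lies in $(\sum_{k \in A} Dw_k)^\perp$ exactly when $\sum_l b_l\, \psi(w_l, w_k) = 0$ for all $k \in A$, and these equations only involve the coordinates $b_l$ with $l \in N(A)$; hence $\sum_{l \notin N(A)} Dw_l \subseteq (\sum_{k \in A} Dw_k)^\perp$, so $m - |N(A)| \leq m - |A|$ and $|N(A)| \geq |A|$. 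By Hall's marriage theorem there is an injection $\sigma$ from $\{1, \dotsc, r\}$ to the set of large indices with $\psi(w_k, w_{\sigma(k)}) \neq 0$ for every $k$. Each $(k, \sigma(k))$ is admissible, so $a_k a_{\sigma(k)} > 1$; multiplying over $k = 1, \dotsc, r$ gives $\bigl(\prod_{k \leq r} a_k\bigr)\bigl(\prod_{k \leq r} a_{\sigma(k)}\bigr) > 1$. But $\prod_{k \leq r} a_k \leq 1$, and $\prod_{k \leq r} a_{\sigma(k)} \leq \prod_{j \text{ large}} a_j = 1 / \prod_{k \leq r} a_k$ (the $\sigma(k)$ are $r$ distinct large indices and the omitted large norms are all $\geq 1$), so the same product is $\leq 1$ --- a contradiction. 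Hence an admissible pair with $a_i a_j \leq 1$ exists, proving the lemma.

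I expect the main obstacle to be the linear algebra over the division algebra $D$ rather than the combinatorics: one must check that $W \mapsto W^\perp$ behaves like an orthogonal complement --- in particular the dimension formula $\dim_D W^\perp = m - \dim_D W$ and the resulting bound $\dim_D W \leq m/2$ for totally isotropic $W$ --- when $V$ is a left $D$-vector space and $\psi$ is merely $(D,\dag)$-skew-Hermitian. This comes down to verifying that $y \mapsto \psi(\,\cdot\,, y)$ is a bijective $\dag$-semilinear map onto $\Hom_D(V, D)$ and that annihilators of submodules of $\Hom_D(V, D)$ have the expected dimension; once that is in place, the rest is routine bookkeeping with Hall's theorem.
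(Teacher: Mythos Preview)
The paper does not give its own proof of this lemma: it is quoted verbatim as \citep[Lemma~4.3]{Lattices} and used as a black box in the proof of \cref{pre-induction}. So there is no in-paper argument to compare against.

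Your proof is correct. The reduction to a normalised sequence $a_1 \leq \dotsb \leq a_m$ with $\prod a_k = 1$, the observation that small indices must be isotropic and mutually orthogonal under the contradiction hypothesis, and the use of the dimension formula $\dim_D W^\perp = m - \dim_D W$ to bound the size of the totally isotropic block are all sound. The key step --- using non-degeneracy to verify Hall's condition $|N(A)| \geq |A|$ via $\sum_{l \notin N(A)} Dw_l \subset (\sum_{k \in A} Dw_k)^\perp$ and then matching each small index to a distinct large neighbour --- is exactly what is needed to push the product inequality to a contradiction; a na\"ive ``pick any large neighbour'' would fail because the map need not be injective. One small remark: in verifying that $l \notin N(A)$ implies $\psi(w_l, w_k) = 0$ you are implicitly using the skew-Hermitian relation $\psi(w_l, w_k) = -\psi(w_k, w_l)^\dag$ to pass between the two arguments, which is fine but worth making explicit. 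The semilinear-isomorphism justification for the dimension formula over a division algebra is also correct and is the only place where the non-commutativity of $D$ requires care.
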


We now apply this selection procedure to the lattice $L$.

\begin{lemma}\label{pre-induction}
Let $(D,\dag)$ be a division algebra of type~III or~IV endowed with a positive
involution.  
Let $V$ be a left $D$-vector space equipped with a non\-degenerate
$(D,\dag)$-skew-Hermitian form~$\psi$.  
Let $L\subset V$ be a $\mathbb{Z}$-lattice satisfying
$\Trd_{D/\mathbb{Q}}\psi(L,L)\subset\mathbb{Z}$.  
Let $R\subset\Stab_D(L)$ be an order and choose $\eta\in\mathbb{Z}_{>0}$ such that
$\eta R^\dag\subset R$.  

Then an $R$-submodule $M\subset L$ may be found with the following properties:
\begin{enumerate}[(i)]
\item $r:=\dim_D(D\otimes_R M)\in\{1,2\}$;
\item the restriction $\psi|_M$ is nondegenerate;
\item 
$\abs{\textnormal{disc}(M)}  \leqslant (\gamma^2_{d^2em}/d^3e)^{d^2er/2} \cdot \abs{\textnormal{disc}(R)}^r \cdot  \abs{\textnormal{disc}(L)}^{r/m}$ with $d=2$ for type III and \\
$\abs{\textnormal{disc}(M)}  \leqslant (\gamma^2_{2d^2em}/d^3e)^{d^2er} \cdot \abs{\textnormal{disc}(R)}^r \cdot  \abs{\textnormal{disc}(L)}^{r/m}$ for type IV; 

\item and one of the following holds:
\begin{enumerate}[(a)]

\item $D$ has Type $III$,  $r=1$ and $M = Rv_1$ for some $v_1$ such that
 \[ \abs{\psi(v_1, v_1)}_D \leqslant \gamma_{4em} \abs{\textnormal{disc}(L)}^{1/4em}; \]
 \item $D$ has Type $III$, $r=2$ and there exist $D$-linearly independent vectors $v_1, v_2 \in M$ such that $\psi(v_1, v_2) = 0$,
 \[ \abs{\psi(v_1, v_1)}_D, \abs{\psi(v_2, v_2)}_D \leqslant \sqrt{\gamma_{3e}} \eta^{5/3} \abs{{\textnormal{disc(R)}}}^{8e+6/e} \cdot \gamma^{8e}_{4em} \abs{\textnormal{disc(L)}}^{2/m}, \]
 and
 \[[M:Rv_1 + Rv_2] \leqslant \abs{\disc(R)}^{32e^2+24} \cdot 2^{-80e/3} \eta^{20e/3} \gamma_{3e}^e \gamma_{4em}^{32e^2-4e}  \abs{\textnormal{disc}(L)}^{(8e-1)/m}. \]
 \item  $D$ has Type $IV$,  $r=1$ and $M = Rv_1$ for some $v_1$ such that
 \[ \abs{\psi(v_1, v_1)}_D \leqslant \gamma_{2d^2em} \abs{\textnormal{disc}(L)}^{1/2d^2em}; \]
\item $D$ has Type $IV$, $r=2$ and there exist $D$-linearly independent vectors $v_1, v_2 \in M$ such that $\psi(v_1, v_2) = 0$,
 \[ \abs{\psi(v_1, v_1)}_D, \abs{\psi(v_2, v_2)}_D \leqslant 16 \sqrt{\gamma_{d^2e}} \eta^3 \abs{{\textnormal{disc(R)}}}^{4d^2e+2/e} \gamma^{4d^2e}_{2d^2em} \abs{\textnormal{disc(L)}}^{2/m}, \]
 and
 \[\begin{split}
      [M:Rv_1 + Rv_2]  \leqslant &4^{2de(1+d)} \abs{\disc(R)}^{4d^2(1+2de^2)} \eta^{6d^2e} \gamma_{d^2e}^{d^2e}  \cdot \\
  &\quad{(\gamma_{2d^2em}/(de)^{d^2e})}^{2d^2e(4d^2e-1)} \abs{\disc(L)}^{(4d^2e-1)/m}. 
 \end{split}\]
 \end{enumerate} 
\end{enumerate}
\end{lemma}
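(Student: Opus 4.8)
The plan is to follow the inductive skeleton of \citep[Section~4]{Lattices}, replacing the commutative manipulations available for types~I and~II by the structural lemmas \cref{as1}, \cref{Rs} and \cref{w1w2}. Write $N=\dim_\bQ(V)$, so $N=4em$ in type~III and $N=2d^2em$ in type~IV, and fix a $D$-norm $\abs{\cdot}$ on $V_\bR$ adapted to $\psi$ (\cref{good-norm}). By the geometry of numbers, exactly as in \citep[Section~4]{Lattices}, one obtains a $D$-basis $w_1,\dotsc,w_m$ of $V$ with all $w_i\in L$, with $w_1$ a shortest non-zero vector of $L$, and with $\prod_{i=1}^m\abs{w_i}^2\le c_0\abs{\disc(L)}^{m/N}$ for a constant $c_0$ depending only on $m$ and $\dim_\bQ(D)$ (via the Hermite constant $\gamma_N$). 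Applying \cref{short1} to this basis yields indices $i,j$ with $\abs{w_i}\abs{w_j}\le(\prod_k\abs{w_k})^{2/m}$, $\psi(w_i,w_j)\ne 0$, and $\psi(w_i,w_i)=0$ whenever $i\ne j$. If $i=j$ then $\psi(w_i,w_i)\ne 0$, and I would take $v_1=w_i$, $M=Rv_1$, $r=1$: non-degeneracy of $\psi|_M$ is immediate; $\abs{\psi(v_1,v_1)}_D\le\abs{v_1}^2=\abs{w_1}^2\le\gamma_N\abs{\disc(L)}^{1/N}$ by Minkowski's first theorem, which is (iv)(a)/(c); and \cref{trangular} applied to the single vector gives $\abs{\disc(M)}=k\abs{\disc(R)}\,\abs{\Nm_{D/\bQ}(\psi(v_1,v_1))}$, after which \cref{Normbound} bounds $\abs{\Nm_{D/\bQ}(\psi(v_1,v_1))}$ by a constant times $\abs{\psi(v_1,v_1)}_D^{\dim_\bQ(D)}$ and (iii) follows.

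Suppose now $i\ne j$; set $M=L\cap(Dw_i+Dw_j)$ and $r=2$. Then $\psi$ restricted to $Dw_i+Dw_j$ has matrix $\fullsmallmatrix{0}{\psi(w_i,w_j)}{-\psi(w_i,w_j)^\dag}{0}$, which is non-degenerate because $\psi(w_i,w_j)\ne 0$ and $D$ is a division algebra, giving (ii). For (iii) I would observe that, although $w_i,w_j$ are isotropic, the hypothesis of \cref{trangular} still holds for the pair $(w_i,w_j)$ — the only permutation $\sigma$ with $\psi(w_\bullet,w_{\sigma(\bullet)})\ne 0$ is the transposition — so $\abs{\disc(Rw_i+Rw_j)}=k\abs{\disc(R)}^2\abs{\Nm_{D/\bQ}(\psi(w_i,w_j))}^2$; since $Rw_i+Rw_j\subseteq M$ one has $\abs{\disc(M)}\le\abs{\disc(Rw_i+Rw_j)}$, and combining \cref{Normbound} (bounding $\abs{\Nm_{D/\bQ}(\psi(w_i,w_j))}$ by a constant times $(\abs{w_i}\abs{w_j})^{\dim_\bQ(D)}$, using that the norm is adapted) with the Minkowski bound on $\abs{w_i}\abs{w_j}$ gives (iii) with $r=2$.

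It remains to produce a weakly unitary $D_\bR$-basis $v_1,v_2$ of $Dw_i+Dw_j$ lying in $L$ and to prove (iv)(b)/(d); this is the crux and the main obstacle, since, $w_i$ and $w_j$ being isotropic, one cannot simply rescale a hyperbolic basis as in the commutative setting. The plan is to use the element $\omega\in D^-\setminus\{0\}$ from \cref{as1}, which satisfies $\omega R^*\subseteq R$, $R^*\omega\subseteq R$ and $\abs{\omega}_D\le c_1\abs{\disc(R)}^{c_2}$. With $u=\psi(w_i,w_j)\in R^*$ (using $\Trd_{D/\bQ}\psi(L\times L)\subseteq\bZ$) one has $u\omega\in R^*\omega\subseteq R$, so the vector $v_1=w_i+(u\omega)w_j$ lies in $L$, and a short computation with $\omega^\dag=-\omega$ gives $\psi(v_1,v_1)=-2\,u\,\omega\,u^\dag\in D^-\setminus\{0\}$, with $\abs{\psi(v_1,v_1)}_D\le 2\abs{u}_D^2\abs{\omega}_D$ by \cref{lm}. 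Next take $v_2$ to be $w_j$ orthogonalised against $v_1$, i.e. $v_2\in w_j+D_\bR v_1$ with $\psi(v_1,v_2)=0$; then $\psi(v_2,v_2)\ne 0$ by non-degeneracy of $\psi$ on $Dw_i+Dw_j$, so $\{v_1,v_2\}$ is weakly unitary. This orthogonalisation forces a factor $\psi(v_1,v_1)^{-1}$, hence inverses of $u$ and of $\omega$; these denominators are cleared by multiplying $v_2$ by a suitable positive integer, polynomial in $\abs{\disc(R)}$, $\eta$ and $\Nrd_{D/\bQ}(u)$, so as to land back in $L$ — this is exactly what \cref{Rs} and \cref{w1w2} are for (the key point being that $\omega$ absorbs the factors lying only in $R^*$, on either side), and it is here that exponents such as $4d^2e+2/e$ and $32e^2+24$ originate.

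Finally, with $v_1,v_2\in L$ in hand, the bounds on $\abs{\psi(v_i,v_i)}_D$ in (iv)(b)/(d) follow from the estimates above together with \cref{lm} and \cref{Normbound}; and, because $Rv_1+Rv_2\subseteq M$ while $\abs{\disc(M)}\ge 1$ (the form $\Trd_{D/\bQ}\psi$ is integer-valued and non-degenerate on $M$),
\[ [M:Rv_1+Rv_2]^2=\abs{\disc(Rv_1+Rv_2)}/\abs{\disc(M)}\le\abs{\disc(Rv_1+Rv_2)}, \]
so \cref{trangular} (now with $\sigma=\id$, the basis $v_1,v_2$ being weakly unitary), \cref{Normbound} and the bounds on $\abs{\psi(v_i,v_i)}_D$ yield the index bound. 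Tracking the constants through this last step, and through the denominator-clearing of the previous paragraph — which is where the real work lies — is routine but lengthy, following \citep[Section~4]{Lattices} in spirit.
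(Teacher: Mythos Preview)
Your overall skeleton (adapted norm, Minkowski basis, \cref{short1}, split into $i=j$ and $i\ne j$) matches the paper, and the $r=1$ case is essentially the same. But in the $r=2$ case there is a genuine gap in your construction of $v_1,v_2$, and a secondary divergence in the choice of $M$ that would prevent you from reaching the stated constants.

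\textbf{The main gap.} When $i\ne j$, \cref{short1} only gives $\psi(w_i,w_i)=0$; it says nothing about $\psi(w_j,w_j)$. With $v_1=w_i+(u\omega)w_j$ and $u=\psi(w_i,w_j)$ one actually gets
\[
\psi(v_1,v_1)=-2\,u\omega u^\dag \;-\; u\omega\,\psi(w_j,w_j)\,\omega u^\dag,
\]
so your formula $\psi(v_1,v_1)=-2u\omega u^\dag$ is wrong whenever $\psi(w_j,w_j)\ne 0$, and you then have no guarantee that $\psi(v_1,v_1)\ne 0$ (so the subsequent orthogonalisation of $w_j$ against $v_1$ need not be defined). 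The paper deals with this by first replacing $w_j$ with $w_j'=2b\,w_j-a\,w_i$, where $a=b\,\psi(w_j,w_j)\,\psi(w_i,w_j)^{-1}$ and $b$ is the element of \cref{w1w2}; the correction $-a w_i$ is designed precisely to kill $\psi(w_j,w_j)$, yielding $\psi(w_j',w_j')=0$ and moreover $\psi(w_i,w_j')=\psi(w_j',w_i)=$ (rational constant)$\cdot\omega$. Then $v_1=w_i-w_j'$ and $v_2=w_i+w_j'$ are automatically weakly unitary with $\psi(v_1,v_1)=-\psi(v_2,v_2)=-2\psi(w_j',w_i)\ne 0$, and no further orthogonalisation or denominator-clearing is needed. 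This is exactly where \cref{w1w2} is used, not in clearing denominators after a Gram--Schmidt step as you suggest.

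\textbf{The choice of $M$.} You take $M=L\cap(Dw_i+Dw_j)$; the paper takes $M=Rw_i+Rw_j$. Both are legitimate $R$-submodules, but the index bound in (iv)(b)/(d) is obtained in the paper as an \emph{equality}
\[
[M:Rv_1+Rv_2]=\frac{\abs{\disc(Rv_1+Rv_2)}^{1/2}}{\abs{\disc(Rw_i+Rw_j)}^{1/2}}
= \frac{\abs{\Nrd_{D/\bQ}\psi(v_1,v_1)}^{d/2}\abs{\Nrd_{D/\bQ}\psi(v_2,v_2)}^{d/2}}{\abs{\Nrd_{D/\bQ}\psi(w_i,w_j)}^{d}}
\]
via \cref{trangular}, and the denominator produces a genuine cancellation. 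Your bound $[M:Rv_1+Rv_2]^2\le\abs{\disc(Rv_1+Rv_2)}$ (using only $\abs{\disc(M)}\ge 1$) discards this cancellation and would not recover the exponents stated in the lemma.
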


\begin{proof}
The proof relies on choosing a $D$-norm adapted to $\psi$ as in
\cref{good-norm}.  
By \citep[Prop.~2.5]{Lattices}, one can find a $D$-basis
$w_1,\dots,w_m$ contained in $L$ for which
\[
\abs{w_1}\cdots\abs{w_m}
   \le
   \gamma_{k e m}^{\,m/2}\,
   \covol(L)^{1/(ke)}
   \le
   \gamma_{k e m}^{\,m/2}\,
   \abs{\disc(L)}^{1/(2ke)},
\]
where $k=4$ (type~III) or $k=2d^2$ (type~IV).  
Selecting $(i,j)$ according to \cref{short1} gives
\begin{equation}\label{epsi1}
\abs{\psi(w_i,w_j)}_D
   \le
   \abs{w_i}\,\abs{w_j}
   \le
   \gamma_{ke m}\,\abs{\disc(L)}^{1/(ke m)}.
\end{equation}

[Proof of (i)--(iii)]. Let $M = Rw_i + Rw_j$, so that $r=1$ if $i=j$ and $r=2$ if $i \neq j$.

If $i=j$, then by \cref{short1}, $\psi(w_i, w_i) \neq 0$, so the restriction of $\psi$ to $M$ is non-degenerate.

If $i \neq j$, then by \cref{short1}, $\psi(w_i, w_i) = 0$ and $\psi(w_i, w_j) \neq 0$. Consequently, for any vector $x \in M$, if $x \in Dw_i \setminus \{0\}$, then $\psi(x, w_j) \neq 0$, while if $x \notin Dw_i$, then $\psi(x, w_i) \neq 0$. Thus the restriction of $\psi$ to $M$ is non-degenerate.

By \cref{trangular}, \cref{Normbound}(ii) and \eqref{epsi1}, we obtain that for type IV, in both cases $i=j$ or $i \neq j$,
\begin{align*}
    \abs{\text{disc}(M)}
  & = d^{-2d^2er} \abs{\text{disc}(R)}^r \abs{\Nm_{D/\mathbb{Q}}(\psi(w_i, w_j))}^r \\
  & = d^{-2d^2er} \abs{\text{disc}(R)}^r \abs{\Nrd_{D/\mathbb{Q}}(\psi(w_i, w_j))}^{dr} \\
  & \leqslant d^{-2d^2er} \abs{\text{disc}(R)}^r (de)^{-d^2er} \abs{\psi(w_i, w_j)}_D^{2d^2er} \\
  & \leqslant (d^3e)^{-d^2er} \abs{\text{disc}(R)}^r \cdot \gamma_{2d^2em}^{2d^2er} \abs{\text{disc}(L)}^{r/m}.
\end{align*}
Note that the same steps can be repeated for type III.

For the proof of (iv), we split into cases depending on whether $i=j$ or $i \neq j$.

\subsubsection*{Case~(a)}

If $D$ is a type III division $\mathbb{Q}$-algebra and $i=j$, then let $v_1 = w_i$. Then (iv)(a) holds thanks to \eqref{epsi1}.

\subsubsection*{Case~(b)}

If $D$ is a type III division $\mathbb{Q}$-algebra and $i \neq j$, then choose $\omega \in D^-$ as in \cref{as1}.
Let
\[ w_j' = 2b w_j - a w_i, \] 
where, $b=2^{2-32^{e^2}} \abs{\disc(R)}^{8e} \Nrd_{D/\mathbb{Q}}(\psi(w_i,w_j))^{4} \omega \psi(w_j,w_i)^{-1}$ \break and $a=b \psi(w_j,w_j)(\psi(w_i,w_j)^{-1})$.

By using \cref{w1w2}, $b$ and $a \in R$, so $w_j' \in Rw_i + Rw_j = M$.
Furthermore $w_j'$ and $w_i$ are $D$-linearly independent because $b \neq 0$.

Using the facts that $\psi(w_i,w_i) = 0$ and $(\psi(w_i, w_j)^{-1})^\dag = - \psi(w_j, w_i)^{-1} $, we can calculate
\begin{align*}
    \psi(w_j', w_j') & = 4b\psi(w_j, w_j)b^{\dag} -2b\psi(w_j, w_i)a^{ \dag} - 2a \psi(w_i, w_j)b^{\dag} \\
    &= 4b\psi(w_j, w_j)b^{\dag} -2b \psi(w_j, w_i) (\psi(w_i,w_j)^{-1})^{\dag} \psi(w_j,w_j)^{\dag} b^{\dag} \\ 
    & \ \ \ \ \ \ \ \ \ \ \ \ \ \ \ \ \ \ \ \ \ \  -2b\psi(w_j,w_j)(\psi(w_i,w_j)^{-1}) \psi(w_i, w_j)b^{\dag} \\
    &=(4-2-2)b\psi(w_j, w_j)b^{\dag} \\
    &=0.
\end{align*}

Now we show that  $\psi(w_j', w_i)=  \psi( w_i, w_j')$.
\begin{align}\label{153}
    \psi(w_j', w_i)
  & = 2b \psi(w_j, w_i) \\ \nonumber
   & = 2 \cdot 2^{2-32e^2} \abs{\disc(R)}^{8e} \Nrd_{D/\mathbb{Q}}(\psi(w_i,w_j))^{4} \omega (\psi (w_j,w_i))^{-1}\psi(w_j, w_i) \\ \nonumber
    &=2^{3-32e^2} \abs{\disc(R)}^{8e}  \Nrd_{D/\mathbb{Q}}(\psi(w_i,w_j))^{4} \omega .
\end{align}
And similarly, $ \psi(w_i, w_j')= 2^{3-32e^2} \abs{\disc(R)}^{8e}  \Nrd_{D/\mathbb{Q}}(\psi(w_i,w_j))^{4} \omega $.

Now let
\[ v_1 = w_i - w_j', \ \ \ \ v_2 = w_i + w_j'. \]
Clearly $v_1, v_2 \in Rw_i + Rw_j' \subset M$.
Since $w_i = \frac{1}{2}(v_1 + v_2)$ and $w_j' = \frac{1}{2}(v_2 - v_1)$, the vectors $v_1$ and $v_2$ are $D$-linearly independent.

Since $\psi(w_j', w_i) = \psi(w_i, w_j')$ we can calculate
\begin{align*}
    \psi(v_1, v_2)
  & = \psi(w_i, w_i) + \psi(w_i, w_j') - \psi(w_j', w_i) - \psi(w_j', w_j')
    = 0,
\\ \psi(v_1, v_1)
  & = \psi(w_i, w_i) - \psi(w_i, w_j') - \psi(w_j', w_i) + \psi(w_j', w_j')
    = -2\psi(w_j', w_i),
\\ \psi(v_2, v_2)
  & = \psi(w_i, w_i) + \psi(w_i, w_j') + \psi(w_j', w_i) + \psi(w_j', w_j')
    = 2\psi(w_j', w_i).
\end{align*}
Consequently using \cref{lm,as1,Normbound} and \eqref{epsi1}, \eqref{153},
\begin{align*}
 \abs{(\psi(v_1, v_1))}_D &= \abs{(\psi(v_1, v_1))}_D  =2 \abs{(\psi(w_j', w_i))}_D \\
&\leqslant 2 \cdot 2^{3-32e^2} \abs{\disc(R)}^{8e} \abs{\Nrd_{D/\mathbb{Q}}(\psi(w_i, w_j))}^{4}_D \abs{\omega}_D \\
\begin{split}
   &\leqslant 2^{4-32e^2} \abs{\disc(R)}^{8e}  \cdot ((2e)^{(-e)}\abs{(\psi(w_i, w_j))}^{2e}_D)^{4} 2^{-20/3} \eta^{5/3} \cdot \sqrt{\gamma_{3e}}  \\ 
    &\quad \quad \abs{\textnormal{disc(R)}}^{6/e} 
 \end{split}  \\
 & \leqslant 2^{4-32e^2-4e-20/3} \sqrt{\gamma_{3e}} e^{-4e} \eta^{5/3} \abs{{\textnormal{disc(R)}}}^{8e+6/e} \cdot \gamma^{8e}_{4em} \abs{\textnormal{disc(L)}}^{2/m} \\
 &\leqslant \sqrt{\gamma_{3e}} \eta^{5/3} \abs{{\textnormal{disc(R)}}}^{8e+6/e} \cdot \gamma^{8e}_{4em} \abs{\textnormal{disc(L)}}^{2/m}.
 \end{align*}
This proves the first inequality in (iv)(b).

Now by \cref{Normbound} and the fact that if $a \in \mathbb{Q}$, then $\Nrd_{D/\mathbb{Q}}(a) = (a)^{2e}$,
\begin{align*}
    \abs{\Nrd_{D/\mathbb{Q}}(\psi(v_1, v_1))}
\\  = \abs{\Nrd_{D/\mathbb{Q}}(\psi(v_2, v_2))} 
& = \abs{\Nrd_{D/\mathbb{Q}}(2(\psi(w_j',w_i))}  \\
& = \abs{\Nrd_{D/\mathbb{Q}}(2(2^{3-32e^2} \abs{\disc(R)}^{8e} \Nrd_{D/\mathbb{Q}}(\psi(w_i,w_j))^{4})) \omega} \\
&\leqslant (2^{4-32e^2} \abs{\disc(R)}^{8e})^{2e} \abs{\Nrd_{D/\mathbb{Q}}(\psi(w_i, w_j))}^{8e} \abs{\Nrd_{D/\mathbb{Q}}(\omega)} \\
&\leqslant  \abs{\disc(R)}^{16e^2} \abs{\Nrd_{D/\mathbb{Q}}(\psi(w_i, w_j))}^{8e} \abs{\Nrd_{D/\mathbb{Q}}(\omega)} 
\end{align*}
Therefore by \cref{Normbound,as1} and \eqref{epsi1},
\begin{align*}
    [M : Rv_1 + Rv_2]
  &\leqslant \frac{(\abs{\disc(R)}^{16e^2} \abs{\Nrd_{D/\mathbb{Q}}(\psi(w_i, w_j))}^{8e} \abs{\Nrd_{D/\mathbb{Q}}(\omega)})^2}{\abs{\Nrd_{D/\mathbb{Q}}(\psi(w_i, w_j))}^2}
\\& = \abs{\disc(R)}^{32e^2} \abs{\Nrd_{D/\mathbb{Q}}(\omega)}^2 \abs{\Nrd_{D/\mathbb{Q}}(\psi(w_i, w_j))}^{16e-2}
\\& \leqslant \abs{\disc(R)}^{32e^2} \cdot (2e)^{-2e} \abs{\omega}_D^{4e} \cdot ((2e)^{-e} \abs{\psi(w_i, w_j)}_D^{2e})^{16e-2}
\\&  \leqslant \abs{\disc(R)}^{32e^2} \cdot \abs{\omega}_D^{4e} \cdot (\abs{\psi(w_i, w_j)}_D)^{(32e^2-4e)}
\\& \leqslant \abs{\disc(R)}^{32e^2} \cdot 2^{-80/3e} \eta^{20/3e} \gamma_{3e}^e \abs{\disc(R)}^{24}  \gamma_{4em}^{32e^2-4e} \abs{\textnormal{disc}(L)}^{(8e-1)/m}.
\end{align*}

\subsubsection*{Case~(c)}

If $D$ is a type IV division $\mathbb{Q}$-algebra and $i=j$, then let $v_1 = w_i$. Then (iv)(c) holds thanks to \eqref{epsi1}.

\subsubsection*{Case~(d)}

If $D$ is a type IV division $\mathbb{Q}$-algebra and $i \neq j$, then choose $\omega \in D^-$ as in \cref{as1}.
Let
\[ w_j' = 2b w_j - a w_i, \] 
where, $b=d^{2-8d^4e^2} \abs{\disc(R)}^{4d^2e} \Nrd_{D/\mathbb{Q}}(\psi(w_i,w_j))^{2d} \omega \psi(w_j,w_i)^{-1}$ \break and $a=b \psi(w_j,w_j)(\psi(w_i,w_j)^{-1})$.

By using \cref{w1w2}, $b$ and $a \in R$, so $w_j' \in Rw_i + Rw_j = M$.
Furthermore $w_j'$ and $w_i$ are $D$-linearly independent because $b \neq 0$.

Using the facts that $\psi(w_i,w_i) = 0$ and $(\psi(w_i, w_j)^{-1})^\dag = - \psi(w_j, w_i)^{-1} $, we can calculate $\psi(w_j', w_j')=0$ as in Type III case.

Now we show that  $\psi(w_j', w_i)=  \psi( w_i, w_j')$.
\begin{align}\label{15}
    \psi(w_j', w_i)
  & = 2b \psi(w_j, w_i) \\ \nonumber
   & = 2 d^{2-8d^4e^2} \abs{\disc(R)}^{4d^2e} \Nrd_{D/\mathbb{Q}}(\psi(w_i,w_j))^{2d} \omega (\psi (w_j,w_i))^{-1}\psi(w_j, w_i) \\ \nonumber
    &=2 d^{2-8d^4e^2} \abs{\disc(R)}^{4d^2e}  \Nrd_{D/\mathbb{Q}}(\psi(w_i,w_j))^{2d} \omega .
\end{align}
And similarly, $ \psi(w_i, w_j')= 2 d^{2-8d^4e^2} \abs{\disc(R)}^{4d^2e}  \Nrd_{D/\mathbb{Q}}(\psi(w_i,w_j))^{2d} \omega$.


Now let
\[ v_1 = w_i - w_j', \ \ \ \ v_2 = w_i + w_j'. \]
Clearly $v_1, v_2 \in Rw_i + Rw_j' \subset M$.
Since $w_i = \frac{1}{2}(v_1 + v_2)$ and $w_j' = \frac{1}{2}(v_2 - v_1)$, the vectors $v_1$ and $v_2$ are $D$-linearly independent.

Since $\psi(w_j', w_i) = \psi(w_i, w_j')$ we can calculate $ \psi(v_1, v_2)=0, \psi(v_1, v_1)=-2\psi(w_j', w_i)=-\psi(v_2, v_2)$ as in Type III.
Consequently using \cref{lm,as1,Normbound} and \eqref{epsi1}, \eqref{15},
\begin{align*}
 \abs{(\psi(v_1, v_1))}_D &= \abs{(\psi(v_1, v_1))}_D  =2 \abs{(\psi(w_j', w_i))}_D \\
&\leqslant 4 d^{2-8d^4e^2} \abs{\disc(R)}^{4d^2e} \abs{\Nrd_{D/\mathbb{Q}}(\psi(w_i, w_j))}^{2d}_D \abs{\omega}_D \\
\begin{split}
   &\leqslant 4 d^{2-8d^4e^2} \abs{\disc(R)}^{4d^2e}  \cdot ((de)^{(-de)}\abs{(\psi(w_i, w_j))}^{2de}_D)^{2d} 4 \eta^3 \cdot \\ 
    &\quad \quad d^{-(4d^2)}\sqrt{\gamma_{d^2e}}\abs{\textnormal{disc(R)}}^{2/e} 
 \end{split}  \\
 & \leqslant 16 \sqrt{\gamma_{d^2e}} d^{2-8d^4e^2-2d^2e-4d^2} e^{-2d^2e} \eta^3 \abs{{\textnormal{disc(R)}}}^{4d^2e+2/e} \cdot \gamma^{4d^2e}_{2d^2em} \abs{\textnormal{disc(L)}}^{2/m} \\
 &\leqslant 16 \sqrt{\gamma_{d^2e}} \eta^3 \abs{{\textnormal{disc(R)}}}^{4d^2e+2/e} \cdot \gamma^{4d^2e}_{2d^2em} \abs{\textnormal{disc(L)}}^{2/m}.
 \end{align*}
This proves the first inequality in (iv)(b).

Using \cref{trangular}, we have
\begin{align*}
    [M : Rv_1 + Rv_2]
  & = \frac{\abs{\textnormal{disc}(Rv_1 + Rv_2)}^{1/2}}{\abs{\textnormal{disc}(M)}^{1/2}} = \frac{\abs{\textnormal{disc}(Rv_1 + Rv_2)}^{1/2}}{\abs{\textnormal{disc}(Rw_i + Rw_j)}^{1/2}}
\\& = \frac{\abs{\Nm_{D/\mathbb{Q}}(\psi(v_1, v_1))}^{1/2} \abs{\Nm_{D/\mathbb{Q}}(\psi(v_2, v_2))}^{1/2}}{\abs{\Nm_{D/\mathbb{Q}}(\psi(w_i, w_j))}^{1/2} \abs{\Nm_{D/\mathbb{Q}}(\psi(w_j, w_i))}^{1/2}}
\\& = \frac{\abs{\Nrd_{D/\mathbb{Q}}(\psi(v_1, v_1))}^{d/2} \abs{\Nrd_{D/\mathbb{Q}}(\psi(v_2, v_2))}^{d/2}}{\abs{\Nrd_{D/\mathbb{Q}}(\psi(w_i, w_j))}^d}.
\end{align*}
Now by \cref{Normbound} and the fact that if $a \in \mathbb{Q}$, then $\Nrd_{D/\mathbb{Q}}(a) = (a)^{2de}$,
\begin{align*}
    \abs{\Nrd_{D/\mathbb{Q}}(\psi(v_1, v_1))}
\\  = \abs{\Nrd_{D/\mathbb{Q}}(\psi(v_2, v_2))} 
& = \abs{\Nrd_{D/\mathbb{Q}}(2(\psi(w_j',w_i))}  \\
& = \abs{\Nrd_{D/\mathbb{Q}}(2(2 d^{2-8d^4e^2} \abs{\disc(R)}^{4d^2e} \Nrd_{D/\mathbb{Q}}(\psi(w_i,w_j))^{2d})) \omega} \\
&\leqslant (4d^{2-8d^4e^2} \abs{\disc(R)}^{4d^2e})^{2de} \abs{\Nrd_{D/\mathbb{Q}}(\psi(w_i, w_j))}^{4d^2e} \abs{\Nrd_{D/\mathbb{Q}}(\omega)} 
\end{align*}
Therefore by \cref{Normbound,as1} and \eqref{epsi1},
\begin{align*}
    [M : Rv_1 + Rv_2]
  & \leqslant \frac{((4d^{2-8d^4e^2} \abs{\disc(R)}^{4d^2e})^{2de}  \abs{\Nrd_{D/\mathbb{Q}}(\omega)} \abs{\Nrd_{D/\mathbb{Q}}(\psi(w_i, w_j))}^{4d^2e})^d}{\abs{\Nrd_{D/\mathbb{Q}}(\psi(w_i, w_j))}^d}
\\& = 4^{2de} d^{4de-16d^5e^3} \abs{\disc(R)}^{8d^3e^2} \abs{\Nrd_{D/\mathbb{Q}}(\omega)}^d \abs{\Nrd_{D/\mathbb{Q}}(\psi(w_i, w_j))}^{d(4d^2e-1)}
\\& \leqslant 4^{2de} d^{4de-16d^5e^3} \abs{\disc(R)}^{8d^3e^2} \cdot (de)^{-d^2e} \abs{\omega}_D^{2d^2e} \cdot ((de)^{-de} \abs{\psi(w_i, w_j)}_D^{2de})^{d(4d^2e-1)}
\\&  \leqslant 4^{2de} d^{4de-16d^5e^3} (de)^{-d^2e}\abs{\disc(R)}^{8d^3e^2} \cdot \abs{\omega}_D^{2d^2e} \cdot ((de)^{-d^2e}\abs{\psi(w_i, w_j)}_D)^{2d^2e(4d^2e-1)}
\\ \begin{split}
    &\leqslant 4^{2de} d^{4de-16d^5e^3} (de)^{-d^2e}  \abs{\disc(R)}^{8d^3e^2}  \cdot 4^{2d^2e} d^{-8d^4e}\eta^{6d^2e} \gamma_{d^2e}^{d^2e} \abs{\textnormal{disc}(R)}^{4d^2} \cdot \\
    &\quad \quad (\gamma_{2d^2em}/(de)^{d^2e})^{2d^2e(4d^2e-1)} \abs{\disc(L)}^{(4d^2e-1)/m}.
\end{split}
 \\&      \qedhere
\end{align*}

\end{proof}

\subsection{Inductive construction of a unitary basis}

We now formulate the inductive step needed to prove \cref{height-bound}.
The next result extends that theorem by allowing any
$R\subset\Stab_D(L)$ (rather than equality) and by introducing the auxiliary
constant $\eta$.  
When $R=\Stab_D(L)$, \cref{Rint} provides the needed control on $\eta$.

\begin{proposition}\label{wind}
Let $(D,\dag)$ be a type~IV division algebra over~$\mathbb{Q}$ with positive involution,
and let $V$ be a left $D$-module equipped with a nondegenerate
$(D,\dag)$-skew-Hermitian form $\psi$.  
Let $L\subset V$ be a $\mathbb{Z}$-lattice with
$\Trd_{D/\mathbb{Q}}\psi(L,L)\subset\mathbb{Z}$.  
Let $R\subset \Stab_D(L)$ be an order and choose $\eta\in\mathbb{Z}_{>0}$ such that
$\eta R^\dag\subset R$.  

Then one can find a $D$-basis $v_1,\dots,v_m$ of $V$ satisfying:
\begin{enumerate}[(i)]
\item $v_1, \dotsc, v_m \in L$;
\item the basis is weakly unitary with respect to $\psi$;
\item the index of $Rv_1 + \dotsb + Rv_m$ in $L$ is bounded as follows:
\createC{so-index-mult} \createC{so-index-eta} \createC{so-index-R} \createC{so-index-L}
\[ [L : Rv_1 + \dotsb + Rv_m] \leq \refC{so-index-mult}(d,e,m) \eta^{\refC{so-index-eta}(d,e,m)} \abs{\disc(R)}^{\refC{so-index-R}(d,e,m)} \abs{\disc(L)}^{\refC{so-index-L}(d,e,m)}; \]
\item for all $i, j \in \{ 1, \dotsc, m \}$ such that $\psi(v_i, v_j) \neq 0$,
\createC{so-psi-mult} \createC{so-psi-eta} \createC{so-psi-R} \createC{so-psi-L}
\[ \abs{\psi(v_i, v_j)}_D \leq \refC{so-psi-mult}(d, e, m) \eta^{\refC{so-psi-eta}(d,e,m)}  \abs{\disc(R)}^{\refC{so-psi-R}(d,e,m)} \abs{\disc(L)}^{\refC{so-psi-L}(d,e,m)}. \]
\end{enumerate}


The inequalities (iii) and (iv) hold with the following values of the constants:
\begin{center}
\bgroup
\renewcommand{\arraystretch}{1.4}
\begin{tabular}{c|c|c}
 & $Type ~III$
  & $Type ~IV$
\\ \hline
    $\refC{so-index-mult}(d,e,m)$
   & $(8em^2)^{5e^2m(m+2)}$
   & $(64d^3em^2)^{5d^4e^2m(m+2)}$
\\  $\refC{so-index-eta}(d,e,m)$
   &$14em$
  & $3d^2em$
\\  $\refC{so-index-R}(d,e,m)$
 &$m(m+16)/4+24m(m-1)e^2$
& $m(m+8)/4+14m(m-1)d^3e^2$
 \\  $\refC{so-index-L}(d,e,m)$
  &$(m-1)/2+4(m+1)e$
  & $(m-1)/2 + 4(m+1)d^2e$
  \\  $\refC{so-psi-mult}(d,e,m)$
  &$(4em^2)^{e((m(m+1)+14))}$
 & $(4d^2em^2)^{d^2e((m(m+1)+14))/4}$
\\  $\refC{so-psi-eta}(d,e,m)$
  & $7$
  & $3$
\\  $\refC{so-psi-R}(d,e,m)$
  & $e(m(m+1)+26)/16+12e$
  & $\bigl( em(m+1)+24d^2 \bigr)/4$
\\  $\refC{so-psi-L}(d,e,m)$
     &$(m+1)/8$
  & $(m+1)/2$
\end{tabular}
\egroup
\end{center}
\end{proposition}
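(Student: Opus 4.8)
The plan is to prove \cref{wind} by induction on $m = \dim_D(V)$, mirroring the Gram--Schmidt argument of \citep[Section~4]{Lattices} but feeding in the type~III/IV versions of the auxiliary lemmas established above. The base cases are $m=1$ and $m=2$: these are handled directly by \cref{pre-induction} with $M = L$, since in that situation the module $M$ returned by \cref{pre-induction} satisfies $D\otimes_R M = V$, and parts (iv)(a)--(d) already produce a weakly unitary system $v_1$ (resp.\ $v_1,v_2$) spanning all of $V$ with the required bounds on $\abs{\psi(v_i,v_i)}_D$ and, when $r=2$, on the index $[M:Rv_1+Rv_2]$; combining the latter with part (iii) of \cref{pre-induction} (which bounds $\abs{\disc(M)} = \abs{\disc(L)}$ trivially) and \cref{trangular} gives the index bound in (iii). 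One then reads off the explicit exponents in the table by tracking the powers of $\eta$, $\abs{\disc(R)}$ and $\abs{\disc(L)}$ through these inequalities.

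For the inductive step, suppose $m \geq 3$. Apply \cref{pre-induction} to $(V,\psi,L,R,\eta)$ to obtain an $R$-submodule $M \subset L$ of $D$-rank $r \in \{1,2\}$ on which $\psi$ is non-degenerate, together with the weakly unitary vectors $v_1$ (if $r=1$) or $v_1, v_2$ (if $r=2$) inside $M$ and the three bounds in \cref{pre-induction}(iii),(iv). Since $\psi|_M$ is non-degenerate, we get an orthogonal decomposition $V = (D\otimes_R M) \oplus W$ where $W = (D\otimes_R M)^{\perp}$ with respect to $\psi$, of $D$-dimension $m - r$. Set $L' = L \cap W$, which is a full $\bZ$-lattice in $W$ with $\Trd_{D/\bQ}\psi(L'\times L') \subset \bZ$, and let $R' = \Stab_D(L')$ (or just keep $R \subset \Stab_D(L')$); note $\eta$ still works since $\eta R^\dag \subset R$. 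One must bound $\abs{\disc(L')}$ in terms of $\abs{\disc(L)}$, $\abs{\disc(M)}$ and an index coming from the (non-orthogonal over $\bZ$) inclusion $M \oplus L' \hookrightarrow L$; this is the standard projection estimate, using that $\psi$ restricted to $M$ has controlled discriminant so the orthogonal projection $L \to (D\otimes_R M)$ has bounded denominators, exactly as in \citep[Section~4]{Lattices}. Then apply the inductive hypothesis to $(W,\psi|_W,L',R,\eta)$ to obtain a weakly unitary $D$-basis $v_{r+1},\dotsc,v_m$ of $W$ with the analogous bounds, and concatenate to get the weakly unitary $D$-basis $v_1,\dotsc,v_m$ of $V$. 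Finally, for (iii) use multiplicativity of the index together with \cref{trangular} applied to $Rv_1+\dotsb+Rv_m$; for (iv) one may need an extra normalisation step (as in \citep[Section~4]{Lattices}) replacing each late basis vector by an $R$-multiple to absorb denominators, which is where the $\eta$-exponents in the table grow.

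The bookkeeping of constants is the bulk of the work: one writes the recursion $\refC{so-index-R}(d,e,m)$, etc., as functions of $m$ obtained by summing the per-step contributions from \cref{pre-induction} and from the projection estimate over the $m/r \leq m$ inductive stages, then checks that the closed forms in the table dominate these sums. The quadratic-in-$m$ exponents (e.g.\ $m(m+8)/4 + 14m(m-1)d^3e^2$ for $\refC{so-index-R}$) arise precisely because each of the $O(m)$ steps contributes an $O(m)$-sized power of $\abs{\disc(R)}$ through the repeated use of \cref{trangular} and \cref{Normbound}. The main obstacle I anticipate is controlling $\abs{\disc(L')}$ at each inductive stage: passing to the orthogonal complement lattice $L' = L \cap W$ is not discriminant-preserving, and one has to bound the index $[L : (L\cap(D\otimes_R M)) \oplus L']$ using the non-degeneracy bound on $\psi|_M$ from \cref{pre-induction}(iii) — keeping this bound polynomial in $\abs{\disc(R)}$ and $\abs{\disc(L)}$, with exponents independent of the stage, is the delicate point, and it is what forces the specific shape of the exponents $\refC{so-index-L}(d,e,m) = (m-1)/2 + 4(m+1)d^2 e$ and its type~III analogue. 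A secondary subtlety, special to type~IV, is that \cref{pre-induction} does not by itself fix the sign matrix; but since \cref{wind} only asserts weak unitarity (not $\alpha(\varepsilon)$-unitarity), this does not intervene here and is deferred to the application via \cref{semi-orthogonal-normalise}.
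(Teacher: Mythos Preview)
Your proposal is correct and matches the paper's approach: the paper's proof states that it proceeds by induction on $m = \dim_D(V)$, following closely the proof of \citep[Proposition~4.5]{Lattices}, with details deferred to the author's thesis. The Gram--Schmidt-style inductive scheme you describe --- apply \cref{pre-induction} to split off an $R$-submodule $M$ of rank $r\in\{1,2\}$, pass to the orthogonal complement lattice $L' = L\cap W$, control $\abs{\disc(L')}$ via the projection estimate, then invoke the inductive hypothesis and track constants --- is exactly that argument.
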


\begin{proof}
    We proceed by induction on $m = \dim_D(V)$. The argument follows closely the proof of Proposition~4.5 in~\citep{Lattices}, and a more detailed version will appear in the forthcoming work of the author's thesis.
\end{proof}
\begin{lemma}[{\citep[Lemma 4.6]{Lattices}}] \label{Rint}
Let $(D,\dagger)$ be a division $\mathbb{Q}$-algebra equipped with a positive involution, and let $V$ be a left $D$-vector space endowed with a non-degenerate $(D,\dagger)$-skew-Hermitian form $\psi \colon V \times V \to D$.  
Suppose $L$ is a $\mathbb{Z}$-lattice in $V$ such that $\Trd_{D/\mathbb{Q}}\big(\psi(L \times L)\big) \subset \mathbb{Z}$.  
Let $R = \Stab_D(L)$.  
Then $\disc(L)\, R^{\dagger} \subset R$.
\end{lemma}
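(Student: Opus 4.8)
The plan is to work with the dual lattice of $L$ with respect to the $\mathbb{Q}$-valued symplectic form $\phi = \Trd_{D/\mathbb{Q}} \circ \psi$. Set
\[ L^\vee = \{ v \in V : \phi(v, L) \subseteq \mathbb{Z} \}. \]
Since $\psi$ is non-degenerate and the reduced trace pairing on the semisimple algebra $D$ is non-degenerate, $\phi$ is a non-degenerate alternating $\mathbb{Q}$-bilinear form on $V$, so $L^\vee$ is again a full $\mathbb{Z}$-lattice. The hypothesis $\Trd_{D/\mathbb{Q}} \psi(L \times L) \subseteq \mathbb{Z}$ says exactly that $L \subseteq L^\vee$, and then $L^\vee/L$ is a finite abelian group of order $\abs{\disc(L)}$ (here $\disc(L) = \disc(L,\phi)$, which is a perfect square since $\phi$ is alternating, so there is no sign ambiguity); in particular $L^\vee/L$ is annihilated by $\disc(L)$, so $\disc(L)\, L^\vee \subseteq L$.

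The key step is to prove the inclusion $a^\dag L \subseteq L^\vee$ for every $a \in R = \Stab_D(L)$. By \cref{tr-skew-hermitian-form}, $\phi$ is a $(D,\dag)$-compatible skew-symmetric form, i.e.\ $\phi(bx, y) = \phi(x, b^\dag y)$ for all $b \in D$ and $x,y \in V$. Applying this with $b = a^\dag$ and using $(a^\dag)^\dag = a$, we get, for $v \in L$ and any $w \in L$,
\[ \phi(a^\dag v, w) = \phi\bigl(v, (a^\dag)^\dag w\bigr) = \phi(v, a w). \]
Since $a$ stabilises $L$ we have $aw \in L$, hence $\phi(v, aw) \in \Trd_{D/\mathbb{Q}} \psi(L \times L) \subseteq \mathbb{Z}$. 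As $w \in L$ was arbitrary, this shows $a^\dag v \in L^\vee$, and since $v \in L$ was arbitrary, $a^\dag L \subseteq L^\vee$.

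Combining the two steps, for each $a \in R$ we obtain $\disc(L)\, a^\dag L \subseteq \disc(L)\, L^\vee \subseteq L$, so $\disc(L)\, a^\dag \in \Stab_D(L) = R$; that is, $\disc(L)\, R^\dag \subseteq R$. I do not expect a genuine obstacle here: the substantive observation is the translation of the integrality hypothesis into $L \subseteq L^\vee$ together with the correct use of the $(D,\dag)$-compatibility identity. The only points requiring care are routine bookkeeping — checking that $\phi$ is non-degenerate so that $L^\vee$ is well-defined, and verifying $[L^\vee : L] = \abs{\disc(L)}$ via Gram matrices, using that the Gram determinant of an integral alternating form is the square of a Pfaffian so that $\disc(L) = \abs{\disc(L)}$.
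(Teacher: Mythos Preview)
Your proof is correct: the reduction to the dual lattice $L^\vee$ with respect to $\phi = \Trd_{D/\mathbb{Q}}\circ\psi$, the identity $\phi(a^\dag v,w)=\phi(v,aw)$ from $(D,\dag)$-compatibility, and the observation that $\disc(L)$ annihilates $L^\vee/L$ are exactly the ingredients needed. The paper does not give its own proof of this lemma but simply cites \citep[Lemma~4.6]{Lattices}; your argument is the standard one and is presumably what appears there.
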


To conclude the proof of \cref{height-bound}, we combine \cref{wind} with \cref{Rint}.  
In part (iii), the exponent of $\lvert \disc(L) \rvert$ becomes
$\refC{so-index-eta}(d,e,m) + \refC{so-index-L}(d,e,m),$
and in part (iv), the exponent of $\lvert \disc(L) \rvert$ is
$\refC{so-psi-eta}(d,e,m) + \refC{so-psi-L}(d,e,m).$
All remaining constants in \cref{height-bound} coincide with the corresponding constants in \cref{wind}.

\end{document}